\newtheorem{theorem}{Theorem}[section]
\newtheorem{lemma}[theorem]{Lemma}
\newtheorem{corollary}[theorem]{Corollary}
\newtheorem{problem}[theorem]{Problem}
\newcommand{\scalar}[2]{\left(#1, #2\right)}
\newcommand{\norm}[1]{\lVert #1 \rVert}
\newcommand{\average}[1]{\lbrace \kern-0.75ex \lbrace #1 \rbrace \kern-0.75ex \rbrace}
\DeclareMathOperator{\divv}{div}
\DeclareMathOperator{\tang}{tang}
\DeclareMathOperator{\diag}{diag}
\begin{document}
    \title{Analysis of a family of time-continuous strongly conservative space-time finite element methods for the dynamic Biot model}
    \author{Johannes Kraus\thanks{Faculty of Mathematics, University of Duisburg-Essen, Germany}
        \and
        Maria Lymbery\thanks{Institute of Artificial Intelligence in Medicine, University of Duisburg-Essen, Germany} \and
        Kevin Osthues\footnotemark[1] \and
        Fadi Philo\footnotemark[1]}
    
    \maketitle
    
    \begin{abstract}
        We consider the dynamic Biot model~\cite{Biot1962Mech} describing the interaction between fluid flow and solid deformation
        including wave propagation phenomena in both the liquid and solid phases of a saturated porous medium. The model couples
        a hyperbolic equation for momentum balance to a second-order in time dynamic Darcy law and a parabolic equation for the
        balance of mass and is here considered in three-field formulation with the displacement of the elastic matrix, the fluid velocity,
        and the fluid pressure being the physical fields of interest.
        
        A family of variational space-time finite element methods is proposed that combines a continuous-in-time Galerkin ansatz of arbitrary
        polynomial degree with inf-sup stable $H(\divv)$-conforming approximations of discontinuous Galerkin (DG) type in case of the
        displacement and a mixed approximation of the flux, its time derivative and the pressure field. We prove error estimates in a
        combined energy norm as well as $L^2$~error estimates in space for the individual fields for both maximum and $L^2$ norm in time
        which are optimal for the displacement and pressure approximations.
    \end{abstract}
    
    \begin{quote}
        \textbf{Keywords.} Dynamic Biot model, poroelasticity, time-continuous space-time Galerkin,
        strongly conservative $H(\divv)$-conforming discontinuous Galerkin, mixed method,
        a priori error anlysis
    \end{quote}

    \section{Introduction}
    The theory of poroelasticity dates back to mid of the last century and was most decisively developed by Maurice Anthony Biot in a series of
    pioneering papers, see~\cite{Biot1941general,Biot1955theory,Biot1956theory,Biot1956waves1,Biot1956waves2}. Starting with a quasistatic
    model of consolidation~\cite{Biot1941general}, the theory was generalized from isotropic to anisotropic media~\cite{Biot1955theory}, viscoelastic
    materials~\cite{Biot1956theory}, and extended to dynamics~\cite{Biot1956waves1,Biot1956waves2}.   
    The dynamic models in~\cite{Biot1956waves1,Biot1956waves2,Biot1962acoust,Biot1962Mech} are derived from general principles of nonequilibrium
    thermodynamics and describe the interaction of fluid flow and deformation of (linear, anisotropic) porous media taking into account also wave
    propagation, energy dissipation, and other relaxation effects.
    
    First fundamental mathematical results related to the well-posedness, discretization and stability of the models proposed by Biot
    were addressed, e.g., in~\cite{Carlson1973Lin,Dafermos1968exist,Showalter2000Diff,Zienkiewicz1982Basic,Zienkiewicz1984dynamic,
        Zenisek1984existence,Mielke2013Homo}, for a more comprehensive overview see also~\cite{Cheng2016Poro,Seifert2022Evol}.    
    
    The classical area of application of (thermo-)poroelasticity is geomechanics where typical problems reach from evaluating induced subsidence
    related to hydrocarbon production, simulation of underground reservoirs, environmental planning to the engineering of safe barriers for radioactive
    waste disposals, see,  e.g.,~\cite{Barenblatt1960basic,Zenisek1984finite,Bjornara2016vertically,Michalec2021fully,Gao2022coupled}. 
    In recent years, the important observation that biological tissues can be interpreted as porous, permeable and deformable media infiltrated by fluids, 
    such as blood and interstitial fluid, cf. e.g.~\cite{BOTH2022114183, Pitre2017}, has ultimately led to the successful employment of poroelasticity models
    in the studies of the biomechanical behavior of different human organs, such as the brain,
    \cite{Guo_etal2018subject-specific,Vardakis2019fluid,Kraus2023hybridized}, 
    the liver, \cite{Aichele2021fluids}, 
    the kidney, \cite{Ong2009Deformation},
    which have offered better understanding of various disease states ranging from renal failure to traumatic brain injury to cancer.

    The range of physical parameters in the models under consideration, especially in biomedical applications, makes the design and
    analysis of adequate numerical methods a challenging task. This is why significant effort has been put into developing stable and
    parameter-robust methods for the numerical solution of the underlying coupled partial differential algebraic equations over the last decades. 
    
    A rigorous stability and convergence analysis of finite element approximations of the two-field formulation of Biot's model of consolidation
    in terms of the displacement field and the fluid pressure has first been presented in~\cite{MuradLoula1992improved,MuradLoula1994stability}.
    The derived a priori error estimates are valid for both semidiscrete and fully discrete formulations, where inf-sup stable finite elements are used
    for space discretization and the backward Euler method is employed for time-stepping. Moreover, a post‐processing technique has been introduced to 
    improve the pore-pressure accuracy.
    Numerous works have addressed various aspects of locking-free and (locally) conservative discretization of this class of problems since then,
    more lately, also using generalized and extended finite element, see, e.g.,~\cite{Fries2010extended} and enriched Galerkin methods, see,
    e.g.,~\cite{Lee2018enriched,Lee2023locking}, and to study flow and wave propagation phenomena in fractured heterogeneous porous
    media, see~\cite{Janaki2018enriched}.
    Optimal $L^2$ error estimates for the quasistatic Biot problem have been derived only recently, see~\cite{Wheeler2022optimal}.
    
    Discretizations of the quasistatic Biot problem in classic three-field-formulation, using standard continuous Galerkin approximations
    of the displacement field and a mixed approximation of the flux-pressure pair, where the flux variable is introduced to the system via
    the standard Darcy law, have originally been proposed in~\cite{PhillipsWheeler2007coupling1,PhillipsWheeler2007coupling2}.
    The error estimates proved in these works are for both continuous-  and discrete-in-time approximations.
    This approach has also been extended to discontinuous Galerkin approximations of the displacement field,
    see~\cite{PhillipsWheeler2008coupling}, and other nonconforming approximations, e.g., using modified rotated bilinear
    elements~\cite{Yi2013coupling}, or Crouzeix-Raviart elements for the displacements, see~\cite{Hu2017nonconforming}.
    More recently, in~\cite{Hong2018parameter}, a family of strongly mass conserving discretizations based on $H(\text{div})$-conforming
    discontinuous Galerkin discretization of the displacement field has been proven to be uniformly well-posed in properly 
    fitted parameter-dependent norms resulting in near best
    approximation results robust with respect to all model and discretization parameters (Lam\'{e} parameters, permeability and Biot-Willis
    parameter, storage coefficient, time step and mesh size). Error estimates for the related continuous- and discrete-in-time scheme
    have been delivered in~\cite{KanschatRiviere2018finite} and hybridization techniques considered in~\cite{Kraus2021Unif}.
    A stabilized conforming method for the Biot problem in three-field formulation was proposed in~\cite{Rodrigo2018new}.
    
    The four-field formulation of the quasi-static Biot problem in~\cite{Yi2014convergence} couples two mixed problems and uses a
    mixed finite element method based on the Hellinger-Reissner principle for the mechanics subproblem and a standard mixed finite
    element method for the flow subproblem.
    Optimal a priori error estimates have been proved for both semidiscrete and fully discrete schemes using the Arnold-Winther
    space and the Raviart-Thomas space in the two coupled mixed finite element methods, respectively. The same discretization for
    the same formulation has  further been analyzed in~\cite{Lee2016robust} providing error estimates in $L^\infty$ norm in time and
    $L^2$ norm in space, which are robust with respect to the Lam\'{e} parameters and remain valid in the limiting case of vanishing
    constrained storage coefficient.
    
    Several of these techniques have also been transferred to multiple network poroelasticity theory (MPET), with a focus on
    the stability of continuous and discrete models, mass conservation, hybridization, near-best approximation results, uniform
    preconditioning, and splitting schemes, see, e.g.~\cite{HongEtAl2020parameter,HoKrLyPh2020,Kraus2021Unif}.
    Other three-, four-, and five-field models additionally involve a total pressure and their discretization and a priori error analysis
    as well as robust preconditioning have been addressed,
    e.g., in~\cite{Oyarzua2016locking,Lee2017parameter,Kumar2020conservative,Lee2023analysis}.
    
    Most of the works mentioned so far--except some references in the first two paragraphs--propose and analyze numerical methods
    for quasi-static models only. While these have been intensively studied over the years and are quite well understood, far less is
    known about stable numerical solution methods for the dynamic Biot model. However, there are a few early groundbraking theoretical
    results on the design and error analysis of finite element approximations for the latter class of problems, see,
    e,g.,~\cite{Dupont1973L2,Baker1976error,Wheeler1973Linfty}. 
    
    The contribution of the present paper is a rigorous convergence analysis of a family of variational space-time finite element methods
    for the dynamic Biot model as presented in~\cite{Biot1956waves1,Biot1956waves2,Biot1962Mech}.
    The continuous-in time-Petrov Galerkin scheme that we use for time discretization has also been studied in~\cite{Bause2023Conv}
    but for a different hyperbolic-parabolic problem in two-field formulation. In fact, the dynamic model in three-field formulation, which
    is subject to the error analysis presented in this work, describes the dynamics of the solid and fluid phases.
    The model in~\cite{Bause2023Conv} follows from system~\eqref{eq:dg_dyn_biot} by setting the fluid density and herewith the
    effective fluid density to zero resulting in a simplification of the momentum balance equation~\eqref{eq:dg_dyn_biot-1}. Moreover,
    the mass balance~\eqref{eq:dg_dyn_biot-3} uses a flux introduced via the dynamic Darcy law~\eqref{eq:dg_dyn_biot-2}, contrary
    to the model in~\cite{Bause2023Conv}, which one would obtain by substituting the standard Darcy 
    flux in the mass balance.
    
    Another basic difference between the approaches studied in~\cite{Bause2023Conv} and here is, that we employ
    $H(\divv)$-conforming approximations, conforming for fluxes and nonconforming of discontinuous Galerkin type
    for the displacement field, thus following~\cite{Hong2019conservativeMPET}.
    Moreover, contrary to~\cite{Bause2023Conv}, we consider a classic three-field formulation, driven by
    the interest to model both the dynamics of the solid and fluid phases.
    The finite element spaces for space-discretizations are chosen in such a way that they ensure point-wise conservation of mass,
    as in~\cite{Hong2018parameter,Hong2019conservativeMPET}. The proofs we present here are inspired by and use ideas
    from~\cite{Dupont1973L2,Baker1976error,Wheeler1973Linfty}. 
    On the bottom line, the difference in the
    dynamic model, as compared to~\cite{Bause2023Conv}, using a three-field instead of a two-field formulation, and the combination of continuous and discontinuous Galerkin
    approximations in space require a different splitting of the error and change the intermediate estimates in the error analysis
    considerably.  
    An analysis, which uses similar arguments as the ones presented here, but for the Crank-Nicolson scheme in time,
    which is equivalent to the lowest-order time discretization considered here, has been performed in~\cite{Philo2022phd}.
    
    The remainder of the manuscript is organized as follows: In Section~\ref{sec:2} we state the three-field formulation of the dynamic Biot 
    problem under consideration. Section~\ref{sec:3} summarizes some of the notation used in this paper, recalls quadrature formulas, 
    introduces the interpolation operators and some related useful auxiliary results. Section~\ref{sec:4} presents the variational 
    formulations giving the starting point for the discrete models and the forthcoming analysis. Section~\ref{sec:5} contains 
    a detailed derivation of the error estimates which are finally given in Theorem~\ref{thm:dg_main_result}, which is the main
    theoretical result of this work.

    \section{Problem formulation}\label{sec:2}
    In this paper, we study the dynamic Biot model for a linear elastic, homogeneous, isotropic, porous solid saturated with a slightly compressible fluid as proposed in~\cite{Biot1962Mech, Biot1941general}, that is, the following coupled system of partial differential equations
    \begin{subequations}
        \label{eq:dg_dyn_biot}
        \begin{alignat}{2}
            \bar{\rho}  \partial_{t}^{2} \bm{u} - 2 \mu \divv(\bm{\epsilon}(\bm{u})) - \lambda  \nabla \divv(\bm{u}) + \alpha  \nabla p + \rho_{f}  \partial_{t} \bm{w} & = \bm{f} & \quad \text{in $ \Omega \times \left(0, T\right] $}, \label{eq:dg_dyn_biot-1} \\
            \rho_{f}  \partial_{t}^{2} \bm{u} + \rho_{w}  \partial_{t} \bm{w} + \bm{K}^{-1} \bm{w} + \nabla p & = \bm{g} & \quad \text{in $ \Omega \times \left(0, T\right] $}, \label{eq:dg_dyn_biot-2} \\
            s_{0}  \partial_{t} p + \alpha \divv(\partial_{t} \bm{u}) + \divv(\bm{w}) & = 0 & \quad \text{in $ \Omega \times \left(0, T\right] $}, \label{eq:dg_dyn_biot-3}
        \end{alignat}
    \end{subequations}
    in a bounded Lipschitz domain $ \Omega \subset \mathbb{R}^{d},  d \in \{2, 3\} $ and $T>0$.
    The physical parameters in the model are
    the total density $ \bar{\rho} = (1 - \phi_{0})  \rho_{s} + \phi_{0}  \rho_{f} > 0 $ and
    the effective fluid density $\rho_{w} \geq \phi_{0}^{-1} \rho_{f} > 0 $, where $\rho_{s} > 0 $ represents the solid density, $ \rho_{f} > 0$ the fluid density and $\phi_{0} \in (0, 1) $ the porosity.
    In addition, the Biot-Willis parameter is specified by $ \alpha \in \left[\phi_{0}, 1\right]$, cf.~\cite{Biot1957Thee} and the constrained specific storage coefficient by $ s_{0} > 0 $, cf.~\cite{Biot1962Mech}.
    The material-dependent Lam\'{e} parameters are denoted by $ \lambda > 0 $ and $ \mu > 0 $.
    Furthermore, $ \bm{K} $ represents the symmetric positive definite permeability tensor, $ \bm{f}: \Omega \times [0, T] \to \mathbb{R}^{d} $ the body force density and $  \bm{g}: \Omega \times [0, T] \to \mathbb{R}^{d} $ a source term.
    The strain tensor is given by the symmetric part of the gradient of $\bm{u}$, that is $\bm{\epsilon}(\bm{u}) = (\nabla \bm{u} + (\nabla \bm{u})^{\top}) / 2  $.
    
    The dynamic Biot model comprises the momentum balance equation \eqref{eq:dg_dyn_biot-1}, the dynamic Darcy law \eqref{eq:dg_dyn_biot-2} and the mass balance equation \eqref{eq:dg_dyn_biot-3}, see \cite{Mielke2013Homo}.
    Therein, the unknown physical fields are described by the displacement $ \bm{u} = \bm{u}(\bm{x}, t) $, fluid velocity $ \bm{w} = \bm{w}(\bm{x}, t) $ and fluid pressure $ p = p(\bm{x}, t) $.
    By neglecting acceleration of the solid and the fluid phase, the dynamic Biot model~\eqref{eq:dg_dyn_biot} reduces to the classical three-field formulation of the quasi-static Biot problem~\cite{Hong2018parameter}.
    We complete problem~\eqref{eq:dg_dyn_biot} by the following initial and boundary conditions
    \begin{alignat}{4}
        \label{eq:dg_boundary_conditions}
        \begin{aligned}
            \bm{u} & = \bm{u}_{0} & \quad & \text{in $ \Omega \times \{0\} $}, \qquad\qquad & 
            \bm{u} & = \bm{0} & \quad & \text{on $ \partial \Omega \times \left(0, T\right] $}, \\
            \partial_{t} \bm{u} & = \bm{v}_{0} & \quad & \text{in $ \Omega \times \{0\} $}, &
            \bm{w} \cdot \bm{n} & = 0 & \quad & \text{on $ \partial \Omega \times \left(0, T\right] $}, \\
            p & = p_{0} & \quad & \text{in $ \Omega \times \{0\} $}.
        \end{aligned}
    \end{alignat}
    Subject to the initial and boundary conditions \eqref{eq:dg_boundary_conditions}, problem~\eqref{eq:dg_dyn_biot}  is well-posed which can be shown using Picard's theorem~\cite[Thm.~6.2.1]{Seifert2022Evol}.

    \section{Preliminaries}\label{sec:3}
    
    \subsection{Function spaces}
    In this paper, we use standard notation for Sobolev spaces.
    The $L^{2}$ inner product is denoted by $\scalar{\cdot}{\cdot}$.
    For our analysis, we need the Bochner spaces $ L^{2}(J; B) $, $ C(J; B) $, $ C^{m}(J; B),  m \in \mathbb{N} $, where $ B $ is a Banach space and $ J \subseteq [0, T] $.
    Throughout the remainder, we mark vector- and matrix-valued quantities in bold.
    
    In view of the proposed time stepping method that is continuous in time, we split the time interval $ I := \left(0, T\right] $ into equidistant subintervals $ I_{n} := \left( t_{n-1}, t_{n} \right],  n = 1, \ldots, N $ 
    of length $\tau$,
    where $ 0 = t_{0} < t_{1} < \cdots < t_{N} = T $.
    Moreover, we define the time mesh by $ \mathcal{M}_{\tau} := \{I_{1}, \ldots, I_{N}\} $. For a fixed $ k \in \mathbb{N}_{0} $ and Banach space $ B $, we introduce the space of $ B $-valued polynomials in time by
    \begin{align*}
        \mathbb{P}_{k}(I_{n}; B) := \left\{ f_{\tau} : I_{n} \to B : f_{\tau}(t) = \sum_{i=0}^{k} b_{i}  t^{i}, \; b_{i} \in B \ \forall t \in I_{n} \; \forall i \right\}.
    \end{align*}
    Additionally, we define the space of globally continuous function in time by
    \begin{align*}
        \mathbb{X}_{\tau}^{k}(B) & := \left\{f_{\tau} \in C(\overline{I}; B) : f_{\tau} \big|_{I_{n}} \in \mathbb{P}_{k}(I_{n}; B) \quad \forall I_{n} \in \mathcal{M}_{\tau}\right\},
    \end{align*}
    and the space of $L^{2}$ functions in time by
    \begin{align*}
        \mathbb{Y}_{\tau}^{k}(B) & := \left\{f_{\tau} \in L^{2}(I; B) : f_{\tau} \big|_{I_{n}} \in \mathbb{P}_{k}(I_{n}; B) \quad \forall I_{n} \in \mathcal{M}_{\tau}\right\}.
    \end{align*}

    \subsection{Quadrature formulas and interpolation operators}
    \label{subsec:quadrature_formulas}
    Here, we review some quadrature formulas and interpolation operators needed throughout the remainder. The $ (k+1) $-point Gauss-Lobatto 
    quadrature formula on $ I_{n} = \left(t_{n-1}, t_{n}\right] $ is given by
    \begin{align}
        \label{eq:quadrature_gl}
        Q_{n}^{\text{GL}}(f) := \tau \sum_{i=0}^{k} \hat{\omega}_{i}^{\text{GL}}  f\big|_{I_{n}}(t_{n, i}^{\text{GL}})
        \approx \int_{I_{n}} f(t) \mathrm{d}t,
    \end{align}
    where $ t_{n, i}^{\text{GL}} := T_{n}(\hat{t}_{i}^{\text{GL}}),  i = 0, \ldots, k $ are the quadrature points on interval $ \overline{I}_{n} $ with quadrature weights $ \hat{\omega}_{i}^{\text{GL}} $.
    The quadrature points $ \{t_{n, i}^{\text{GL}}\}_{i=0}^{k} $ are obtained via
    \begin{align}
        T_{n} & : \hat{I} \to I_{n}, &
        T_{n}(\hat{t}) & = \tau  \hat{t} + t_{n-1},
    \end{align}
    applied to the Gauss-Lobatto quadrature points $ \hat{t}_{i}^{\text{GL}} $ on reference interval $ \hat{I} := [0, 1] $.
    
    Furthermore, we need the $ k $-point Gauss quadrature formula on $ I_{n} $ defined by
    \begin{align}
        \label{eq:quadrature_g}
        Q_{n}^{\text{G}}(f) := \tau \sum_{i=1}^{k} \hat{\omega}_{i}^{\text{G}}  f\big|_{I_{n}}(t_{n, i}^{\text{G}})
        \approx \int_{I_{n}} f(t) \mathrm{d}t,
    \end{align}
    where $ t_{n, i}^{\text{G}} := T_{n}(\hat{t}_{i}^{\text{G}}) $ and $ \hat{\omega}_{i}^{\text{G}},  i = 1, \ldots, k $ denote the quadrature points and weights, 
    respectively, with $ \{\hat{t}_{i}^{\text{G}}\}_{i=1}^{k} $ being the quadrature points on reference interval $ \hat{I} $.
    Both quadrature formulas are exact for polynomials of degree $ 2k-1 $.
    
    In our analysis, we make use of different Lagrange interpolations operators.
    To this end, we first define local operators on subinterval $ I_{n} $ via
    \begin{alignat*}{3}
        \mathcal{I}_{\tau, n}^{\text{GL}} & : C(\overline{I}_{n}; B) \to \mathbb{P}_{k}(\overline{I}_{n}; B), & \qquad
        \mathcal{I}_{\tau, n}^{\text{GL}} f(t_{n, i}^{\text{GL}}) & = f(t_{n, i}^{\text{GL}}), & \quad i & = 0, \ldots, k, \\
        \mathcal{I}_{\tau, n}^{\text{G}} & : C(\overline{I}_{n}; B) \to \mathbb{P}_{k-1}(\overline{I}_{n}; B), & \qquad
        \mathcal{I}_{\tau, n}^{\text{G}} f(t_{n, i}^{\text{G}}) & = f(t_{n, i}^{\text{G}}), & \quad i & = 1, \ldots, k, \\
        \mathcal{I}_{\tau, n}^{\text{G}, 0} & : C(\overline{I}_{n}; B) \to \mathbb{P}_{k}(\overline{I}_{n}; B), & \qquad
        \mathcal{I}_{\tau, n}^{\text{G}, 0} f(t_{n, i}^{\text{G}, 0}) & = f(t_{n, i}^{\text{G}, 0}), & \quad i & = 0, \ldots, k,
    \end{alignat*}
    where $ B $ is a Banach space, $ t_{n, 0}^{\text{G}, 0} := t_{n-1} $ and $ t_{n, i}^{\text{G}, 0} := t_{n, i}^{\text{G}} $ for $ i = 1, \ldots, k $.
    With these local interpolation operators at hand, we can specify the global interpolation operators by
    \begin{alignat*}{3}
        \mathcal{I}_{\tau}^{\text{GL}} & : C(\overline{I}; B) \to \mathbb{X}_{\tau}^{k}(B), & \qquad
        \mathcal{I}_{\tau}^{\text{GL}} f\big|_{I_{n}} & = \mathcal{I}_{\tau, n}^{\text{GL}}(f\big|_{I_{n}}), & \quad n & = 1, \ldots, N, \\
        \mathcal{I}_{\tau}^{\text{G}} & : C(\overline{I}; B) \to \mathbb{Y}_{\tau}^{k-1}(B), & \qquad
        \mathcal{I}_{\tau}^{\text{G}} f\big|_{I_{n}} & = \mathcal{I}_{\tau, n}^{\text{G}}(f\big|_{I_{n}}), & \quad n & = 1, \ldots, N, \\
        \mathcal{I}_{\tau}^{\text{G}, 0} & : C(\overline{I}; B) \to \mathbb{Y}_{\tau}^{k}(B), & \qquad
        \mathcal{I}_{\tau}^{\text{G}, 0} f\big|_{I_{n}} & = \mathcal{I}_{\tau, n}^{\text{G}, 0}(f\big|_{I_{n}}), & \quad n & = 1, \ldots, N.
    \end{alignat*}
    Note that the identity
    \begin{align}
        \label{eq:representation_I_tau}
        \mathcal{I}_{\tau}^{\text{G}, 0}\big|_{I_{n}} y(t) = \sum_{i=0}^{k} y(t_{n, i}^{\text{G}, 0})  L_{n, i}^{\text{G}, 0}(t) \qquad \text{with} \qquad
        L_{n, i}^{\text{G}, 0}(t) := \prod_{\substack{j=0 \\ j \neq i}}^{k} \frac{t - t_{n, j}^{\text{G}, 0}}{t_{n, i}^{\text{G}, 0} - t_{n, j}^{\text{G}, 0}}, \quad \text{for $ i = 0, \ldots, k $},
    \end{align}
    is valid for $ y \in C(\overline{I}_{n}; B) $.
    Analogously, we define the Lagrange interpolation polynomials $ \{L_{n, i}^{\text{G}}\}_{i=1}^{k} $ corresponding to the quadrature points $ \{t_{n, i}^{\text{G}}\}_{i=1}^{k} $.
    We will further use the notation $ \mathcal{I}_{\tau}^{*} $ for any Lagrange interpolation operator
    defined for $ k+1 $ points $ t_{n, i}^{*},  i = 0, \ldots, k $.
    Recall that on each subinterval $ I_{n} $, the Lagrange interpolation operator $ \mathcal{I}_{\tau}^{*} $ fulfills the estimates
    \begin{subequations}
        \label{eq:interpolation_error_GL}
        \begin{align}
            \norm{\mathcal{I}_{\tau}^{*} f - f}_{L^{2}(I_{n}; L^{2})} & \leq c \tau^{k+1}  \norm{\partial_{t}^{k+1} f}_{L^{2}(I_{n}; L^{2})}, \label{eq:interpolation_error_GL-1} \\
            \norm{\mathcal{I}_{\tau}^{*} f}_{L^{2}(I_{n}; L^{2})} & \leq c \tau^{k+1}  \norm{\partial_{t}^{k+1} f}_{L^{2}(I_{n}; L^{2})} + \norm{f}_{L^{2}(I_{n}; L^{2})}. \label{eq:interpolation_error_GL-2}
        \end{align}
    \end{subequations}

    \subsection{Auxiliary results}
    \label{subsec:auxiliary_results}
    In this subsection, we present some results that are useful for the error analysis performed in Section~\ref{sec:5}.
    Their proofs can be found in appendices~\ref{sec:proof_auxiliary_I}--\ref{sec:proof_auxiliary_III}.
    \begin{lemma}
        \label{lem:dg_auxiliary_I}
        Let
        \begin{align}
            \label{eq:dg_auxiliary_I_assumption}
            x & := \sum_{i=0}^{k} x_{i}  L_{n, i}^{\textnormal{G}, 0}, &
            \bar{x} & := \sum_{i=1}^{k} x_{i}  L_{n, i}^{\textnormal{G}}, &
            y & := \sum_{i=0}^{k} y_{i}  L_{n, i}^{\textnormal{G}, 0}.
        \end{align}
        Then the following statements
        \begin{subequations}
            \label{eq:dg_auxiliary_I}
            \begin{align}
                \int_{I_{n}} (\sum_{i=0}^{k} \beta_{i} x_{i}  L_{n, i}^{\textnormal{G}, 0}, z) \mathrm{d}t
                & = \int_{I_{n}} (\sum_{i=1}^{k} \beta_{i} x_{i}  L_{n, i}^{\textnormal{G}}, z) \mathrm{d}t \qquad \forall z \in \mathbb{P}_{k-1}(I_{n}; L^{2}), \label{eq:dg_auxiliary_I-1} \\
                \int_{I_{n}} (x, \sum_{i=1}^{k} \beta_{i} y_{i}  L_{n, i}^{\textnormal{G}}) \mathrm{d}t
                & = \int_{I_{n}} (\sum_{j=1}^{k} \beta_{j} x_{j}  L_{n, j}^{\textnormal{G}}, y) \mathrm{d}t, \label{eq:dg_auxiliary_I-2} \\
                \norm{\sum_{i=1}^{k} \beta_{i} x_{i}  L_{n, i}^{\textnormal{G}}}_{L^{2}(I_{n}; L^{2})}
                & \leq c \norm{x}_{L^{2}(I_{n}; L^{2})}, \label{eq:dg_auxiliary_I-3} \\
                \norm{\bar{x}}_{L^{2}(I_{n}; L^{2})} & \leq c \norm{x}_{L^{2}(I_{n}; L^{2})}, \label{eq:dg_auxiliary_I-5}
            \end{align}
        \end{subequations}
        are satisfied, where $ \beta_{i} \in \mathbb{R} $ for $ i = 0, \ldots, k $.
    \end{lemma}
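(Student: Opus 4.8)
The plan is to handle the two integral identities \eqref{eq:dg_auxiliary_I-1}--\eqref{eq:dg_auxiliary_I-2} by exactness of Gauss quadrature combined with the collocation property of the Lagrange bases, and to handle the two stability bounds \eqref{eq:dg_auxiliary_I-3}, \eqref{eq:dg_auxiliary_I-5} by a scaling argument to the reference interval $\hat{I}$ followed by a finite-dimensional norm-equivalence estimate. Two facts I would isolate at the outset: since $t_{n,j}^{\textnormal{G}} = t_{n,j}^{\textnormal{G},0}$ for $j = 1,\ldots,k$, the interpolation conditions yield $L_{n,i}^{\textnormal{G},0}(t_{n,j}^{\textnormal{G}}) = \delta_{ij}$ and $L_{n,i}^{\textnormal{G}}(t_{n,j}^{\textnormal{G}}) = \delta_{ij}$; and for polynomials $a,b$ in time with $L^2$-valued coefficients, the map $t \mapsto \scalar{a(t)}{b(t)}$ is a scalar polynomial in $t$ whose degree is the sum of the degrees of $a$ and $b$, so $Q_n^{\textnormal{G}}$ integrates it exactly whenever that sum does not exceed $2k-1$.

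For \eqref{eq:dg_auxiliary_I-1} I would note that $\sum_{i=0}^{k}\beta_i x_i L_{n,i}^{\textnormal{G},0}$ has degree $k$ while $z$ has degree $\le k-1$, so the integrand has degree $\le 2k-1$ and $Q_n^{\textnormal{G}}$ is exact; evaluating at the Gauss nodes and using $L_{n,i}^{\textnormal{G},0}(t_{n,j}^{\textnormal{G}}) = \delta_{ij}$ collapses the value at $t_{n,j}^{\textnormal{G}}$ to $\beta_j x_j$. The right-hand integrand has degree $\le 2k-2$, is likewise integrated exactly, and its nodal value at $t_{n,j}^{\textnormal{G}}$ is again $\beta_j x_j$ by $L_{n,i}^{\textnormal{G}}(t_{n,j}^{\textnormal{G}}) = \delta_{ij}$; hence both sides equal $\tau\sum_{j=1}^{k}\hat{\omega}_j^{\textnormal{G}}\,\scalar{\beta_j x_j}{z(t_{n,j}^{\textnormal{G}})}$. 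Identity \eqref{eq:dg_auxiliary_I-2} follows by the same recipe: both integrands have degree $\le 2k-1$, so applying $Q_n^{\textnormal{G}}$ and using $x(t_{n,j}^{\textnormal{G}}) = x_j$, $y(t_{n,j}^{\textnormal{G}}) = y_j$ reduces each side to the symmetric sum $\tau\sum_{j=1}^{k}\hat{\omega}_j^{\textnormal{G}}\,\beta_j\,\scalar{x_j}{y_j}$.

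For the stability estimate \eqref{eq:dg_auxiliary_I-3} I would pull everything back to $\hat{I}$ via $T_n$, under which $\mathrm{d}t = \tau\,\mathrm{d}\hat{t}$ and each $L_{n,i}^{*}$ becomes the corresponding reference Lagrange polynomial $\hat{L}_i^{*}$; the common factor $\tau^{1/2}$ then cancels from both sides, leaving a $\tau$-independent inequality on $\hat{I}$. Writing $M^{\textnormal{G}}_{ij} = \int_{\hat{I}}\hat{L}_i^{\textnormal{G}}\hat{L}_j^{\textnormal{G}}\,\mathrm{d}\hat{t}$ and $M^{\textnormal{G},0}_{ij} = \int_{\hat{I}}\hat{L}_i^{\textnormal{G},0}\hat{L}_j^{\textnormal{G},0}\,\mathrm{d}\hat{t}$ for the two reference mass matrices, the squared norms become the quadratic forms $\sum_{i,j=1}^{k}\beta_i\beta_j M^{\textnormal{G}}_{ij}\scalar{x_i}{x_j}$ and $\sum_{i,j=0}^{k}M^{\textnormal{G},0}_{ij}\scalar{x_i}{x_j}$ in the nodal values $x_i \in L^2$. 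Since $L^2$ is a Hilbert space, expanding the $x_i$ in an orthonormal basis turns each form into a coordinatewise sum of the scalar forms $\mathbf{a}^\top M \mathbf{a}$, so that the spectral bounds $\lambda_{\min}(M^{\textnormal{G},0})\sum_i\norm{x_i}^2 \le \sum_{i,j}M^{\textnormal{G},0}_{ij}\scalar{x_i}{x_j}$ and $\sum_{i,j}\beta_i\beta_j M^{\textnormal{G}}_{ij}\scalar{x_i}{x_j} \le \lambda_{\max}(M^{\textnormal{G}})\max_i\beta_i^2\sum_i\norm{x_i}^2$ hold; dividing yields \eqref{eq:dg_auxiliary_I-3} with $c^2 = \lambda_{\max}(M^{\textnormal{G}})\max_i\beta_i^2/\lambda_{\min}(M^{\textnormal{G},0})$, depending only on $k$ and the $\beta_i$. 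Estimate \eqref{eq:dg_auxiliary_I-5} is then the special case $\beta_i = 1$ of \eqref{eq:dg_auxiliary_I-3}. The one genuine subtlety I expect, and the reason for the orthonormal-basis detour, is that the nodal values live in the infinite-dimensional space $L^2$, so the slogan that all norms on a finite-dimensional space are equivalent does not apply directly; passing through positive-definiteness of the reference mass matrices is what makes the constant clean and $\tau$-independent.
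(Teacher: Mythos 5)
Your proof is correct and follows essentially the same route as the paper: the two identities are obtained exactly as in Appendix~\ref{sec:proof_auxiliary_I}, by exactness of the $k$-point Gauss rule for degree $\le 2k-1$ together with the collocation properties $L_{n,i}^{\text{G},0}(t_{n,j}^{\text{G}})=\delta_{ij}$ and $L_{n,i}^{\text{G}}(t_{n,j}^{\text{G}})=\delta_{ij}$, reducing both sides to the common quadrature sum. For the stability bounds the paper instead exploits that Gauss exactness makes the $\{L_{n,i}^{\text{G}}\}$ mass matrix diagonal ($\int_{I_n}L_{n,j}^{\text{G}}L_{n,i}^{\text{G}}\,\mathrm{d}t=\tau\hat{\omega}_i^{\text{G}}\delta_{ij}$) and then cites the nodal-value norm equivalence \eqref{eq:prf_dg_auxiliary_I-3} from Karakashian--Makridakis, which is precisely the positive definiteness of your reference mass matrix $M^{\text{G},0}$; your orthonormal-basis expansion is a legitimate self-contained replacement for that citation and correctly addresses the only subtlety (the $L^{2}$-valued coefficients), yielding the same $\tau$-independent constant.
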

    
    \begin{proof}
        The proofs are presented in Appendix~\ref{sec:proof_auxiliary_I}.
    \end{proof}
    
    Subsequently, we list two identities for a specific choice of polynomials.
    
    \begin{lemma}
        \label{lem:dg_auxiliary_II}
        Let
        \begin{align}
            \label{eq:dg_auxiliary_II_assumption}
            \begin{aligned}
                x(t) & := \sum_{i=0}^{k} x_{i}  L_{n, i}^{\textnormal{G}, 0}(t), & \qquad
                x_{\beta}(t) & := \sum_{i=0}^{k} \sum_{j=1}^{k} \beta_{j} x_{i}  \partial_{t} L_{n, i}^{\textnormal{G}, 0}(t_{n, j}^{\textnormal{G}})  L_{n, j}^{\textnormal{G}}(t), \\
                y(t) & := \sum_{i=0}^{k} y_{i}  L_{n, i}^{\textnormal{G}, 0}(t), & \qquad
                y_{\beta}(t) & := \sum_{i=0}^{k} \sum_{j=1}^{k} \beta_{j} y_{i}  \partial_{t} L_{n, i}^{\textnormal{G}, 0}(t_{n, j}^{\textnormal{G}})  L_{n, j}^{\textnormal{G}}(t).
            \end{aligned}
        \end{align}
        and $ \beta_{j} \in \mathbb{R} $ for $ j = 0, \ldots, k $. Then the identities
        \begin{subequations}
            \label{eq:dg_auxiliary_II}
            \begin{align}
                \int_{I_{n}} (x, y_{\beta}) \mathrm{d}t & = \int_{I_{n}} (\sum_{j=0}^{k} \beta_{j} x_{j}  L_{n, j}^{\textnormal{G}, 0}, \partial_{t} y) \mathrm{d}t, \label{eq:dg_auxiliary_II-1} \\
                \int_{I_{n}} (\partial_{t} x, y_{\beta}) \mathrm{d}t & = \int_{I_{n}} (x_{\beta}, \partial_{t} y) \mathrm{d}t, \label{eq:dg_auxiliary_II-2}
            \end{align}
        \end{subequations}
        hold true.
    \end{lemma}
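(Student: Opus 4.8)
The plan is to reduce both identities to the already-established relations \eqref{eq:dg_auxiliary_I-1} and \eqref{eq:dg_auxiliary_I-2} of Lemma~\ref{lem:dg_auxiliary_I}, after rewriting $x_\beta$ and $y_\beta$ in a more transparent form. The starting observation is that, since $\partial_t x = \sum_{i=0}^{k} x_i\,\partial_t L_{n,i}^{\mathrm{G},0}$, the inner double sum defining $x_\beta$ collapses to the weighted nodal values of $\partial_t x$ at the Gauss points, namely
\[
    x_\beta(t) = \sum_{j=1}^{k} \beta_j\,(\partial_t x)(t_{n,j}^{\mathrm{G}})\,L_{n,j}^{\mathrm{G}}(t),
    \qquad
    y_\beta(t) = \sum_{j=1}^{k} \beta_j\,(\partial_t y)(t_{n,j}^{\mathrm{G}})\,L_{n,j}^{\mathrm{G}}(t).
\]
The second ingredient is a reproduction argument: $L_{n,i}^{\mathrm{G},0}$ has degree $k$, so $\partial_t x$ and $\partial_t y$ are polynomials of degree at most $k-1$. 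Consequently the $k$-point Gauss interpolation reproduces them, $\partial_t x = \sum_{j=1}^{k}(\partial_t x)(t_{n,j}^{\mathrm{G}})\,L_{n,j}^{\mathrm{G}}$, and likewise the $(\mathrm{G},0)$ interpolation reproduces them, $\partial_t y = \sum_{i=0}^{k}(\partial_t y)(t_{n,i}^{\mathrm{G},0})\,L_{n,i}^{\mathrm{G},0}$, where one uses $t_{n,i}^{\mathrm{G},0} = t_{n,i}^{\mathrm{G}}$ for $i \geq 1$.

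For \eqref{eq:dg_auxiliary_II-1} I would apply \eqref{eq:dg_auxiliary_I-2} with the function $y$ there replaced by $\partial_t y$, whose $(\mathrm{G},0)$-coefficients are $(\partial_t y)(t_{n,i}^{\mathrm{G},0})$. Since $y_\beta = \sum_{j=1}^{k}\beta_j (\partial_t y)(t_{n,j}^{\mathrm{G}})\,L_{n,j}^{\mathrm{G}}$ is precisely the weighted Gauss sum appearing on the left-hand side of \eqref{eq:dg_auxiliary_I-2}, this yields
\[
    \int_{I_n}(x, y_\beta)\,\mathrm{d}t = \int_{I_n}\Big(\sum_{j=1}^{k}\beta_j x_j\,L_{n,j}^{\mathrm{G}},\ \partial_t y\Big)\,\mathrm{d}t .
\]
It then remains to pass from the Gauss basis back to the $(\mathrm{G},0)$ basis in the first argument. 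Because $\partial_t y \in \mathbb{P}_{k-1}(I_n; L^2)$, relation \eqref{eq:dg_auxiliary_I-1} applies with $z = \partial_t y$ and converts $\sum_{j=1}^{k}\beta_j x_j\,L_{n,j}^{\mathrm{G}}$ into $\sum_{j=0}^{k}\beta_j x_j\,L_{n,j}^{\mathrm{G},0}$ under the integral, giving exactly the right-hand side of \eqref{eq:dg_auxiliary_II-1}; this is also the step in which the coefficient $\beta_0$ reappears.

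Identity \eqref{eq:dg_auxiliary_II-2} is even more direct: applying \eqref{eq:dg_auxiliary_I-2} with both $x$ and $y$ replaced by $\partial_t x$ and $\partial_t y$, and using the reproduction identities above to recognize the resulting weighted Gauss sums as $x_\beta$ and $y_\beta$, turns \eqref{eq:dg_auxiliary_I-2} verbatim into $\int_{I_n}(\partial_t x, y_\beta)\,\mathrm{d}t = \int_{I_n}(x_\beta, \partial_t y)\,\mathrm{d}t$. Structurally this reflects that the Gauss mass matrix $\big(\int_{I_n} L_{n,i}^{\mathrm{G}} L_{n,j}^{\mathrm{G}}\,\mathrm{d}t\big)_{i,j}$ is diagonal, because the integrand has degree $2k-2$ and the $k$-point Gauss rule is exact up to degree $2k-1$, so the bilinear form merely redistributes the weights $\beta_j$ between the two slots. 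I expect the main obstacle to be bookkeeping rather than analysis: keeping track of which interpolation basis ($\mathrm{G}$ versus $\mathrm{G},0$) and which index range ($0,\ldots,k$ versus $1,\ldots,k$) is in force at each step, and checking that the degree counts justify both the reproduction of $\partial_t x,\partial_t y$ and the applicability of \eqref{eq:dg_auxiliary_I-1} with $z=\partial_t y$.
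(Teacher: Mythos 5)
Your proof is correct. It takes a genuinely different, more modular route than the paper: the paper proves both identities by brute-force expansion in the Lagrange bases, evaluating the products $\int_{I_n} L_{n,m}^{\mathrm{G},0}L_{n,j}^{\mathrm{G}}\,\mathrm{d}t$, $\int_{I_n}\partial_t L_{n,m}^{\mathrm{G},0}L_{n,j}^{\mathrm{G}}\,\mathrm{d}t$ and $\int_{I_n}\partial_t L_{n,i}^{\mathrm{G},0}L_{n,j}^{\mathrm{G},0}\,\mathrm{d}t$ with the $k$-point Gauss rule (exact since all integrands have degree at most $2k-1$) and then recombining the sums. You instead recognize that $x_\beta$ and $y_\beta$ are the weighted Gauss sums $\sum_{j}\beta_j(\partial_t x)(t_{n,j}^{\mathrm{G}})L_{n,j}^{\mathrm{G}}$ built from $\partial_t x$ and $\partial_t y$, which, combined with the fact that both interpolation operators reproduce these degree-$(k-1)$ polynomials, lets you read off \eqref{eq:dg_auxiliary_II-1} from \eqref{eq:dg_auxiliary_I-2} followed by \eqref{eq:dg_auxiliary_I-1} (the latter legitimately applied with $z=\partial_t y\in\mathbb{P}_{k-1}(I_n;L^2)$, which is also where $\beta_0$ re-enters harmlessly), and \eqref{eq:dg_auxiliary_II-2} verbatim from \eqref{eq:dg_auxiliary_I-2} applied to the pair $(\partial_t x,\partial_t y)$. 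Your reduction buys brevity and makes the structural content (diagonality of the Gauss mass matrix, redistribution of the weights $\beta_j$) explicit, at the cost of depending on Lemma~\ref{lem:dg_auxiliary_I} holding for arbitrary elements of $\mathbb{P}_k(I_n;L^2)$ written in the $\mathrm{G},0$ basis — which its proof indeed guarantees; the paper's computation is self-contained but essentially re-derives the same two quadrature facts. No gap.
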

    
    \begin{proof}
        For a proof of these equalities, see Appendix~\ref{sec:proof_auxiliary_II}.
    \end{proof}
    
    Finally, we summarize some important estimates for our error analysis in Section~\ref{sec:5}.
    
    \begin{lemma}
        \label{lem:dg_auxiliary_III}
        Let
        \begin{align}
            \label{eq:dg_auxiliary_III_assumption}
            \begin{aligned}
                x(t) & = \sum_{i=0}^{k} x_{i}  L_{n, i}^{\textnormal{G}, 0}(t), & \qquad
                x_{\beta}(t) & = \sum_{i=0}^{k} \sum_{j=1}^{k} \beta_{j} x_{i}  \partial_{t} L_{n, i}^{\textnormal{G}, 0}(t_{n, j}^{\textnormal{G}})  L_{n, j}^{\textnormal{G}}(t).
            \end{aligned}
        \end{align}
        Furthermore, we set $ \beta_{i} := (\hat{t}_{i}^{\textnormal{G}})^{-1} $ for $ i = 1, \ldots, k $ and $ \beta_{0} := 1 $.
        Then the following statements
        \begin{subequations}
            \label{eq:dg_auxiliary_III}
            \begin{align}
                C_{1} \norm{x}_{L^{2}(I_{n}; L^{2})}^{2}
                & \leq \tau \int_{I_{n}} \scalar{\sum_{i=0}^{k} \beta_{i} x_{i}  L_{n, i}^{\textnormal{G}, 0}}{\partial_{t} x} \mathrm{d}t + C_{2} \tau \norm{x(t_{n-1}^{+})}_{L^{2}(\Omega)}^{2}, \label{eq:dg_auxiliary_III-1} \\
                C_{1} \norm{x}_{L^{2}(I_{n}; L^{2})}^{2} & \leq \int_{I_{n}} \scalar{\sum_{i=1}^{k} \beta_{i} x_{i}  L_{n, i}^{\textnormal{G}}}{x} \mathrm{d}t + C_{2} \tau \norm{x(t_{n-1}^{+})}_{L^{2}(\Omega)}^{2}, \label{eq:dg_auxiliary_III-2} \\
                \int_{I_{n}} \scalar{\partial_{t} x}{x_{\beta}} \mathrm{d}t & \simeq \norm{\partial_{t} x}_{L^{2}(I_{n}; L^{2})}^{2}, \label{eq:dg_auxiliary_III-3} \\
                \norm{x_{\beta}}_{L^{2}(I_{n}; L^{2})} & \simeq \norm{\partial_{t} x}_{L^{2}(I_{n}; L^{2})}, \label{eq:dg_auxiliary_III-4} \\
                \norm{\partial_{t} x}_{L^{2}(I_{n}; L^{2})} & \leq C_{3} \tau^{-1} \norm{x}_{L^{2}(I_{n}; L^{2})}, \label{eq:dg_auxiliary_III-5}
            \end{align}
        \end{subequations}
        are valid.
    \end{lemma}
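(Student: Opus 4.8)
The plan is to pull every estimate back to the reference interval $\hat I=[0,1]$ via the affine map $T_n$, so that the polynomials, the Gauss nodes $\hat t_j^{\textnormal{G}}$, the weights $\hat\omega_j^{\textnormal{G}}$ and the constants $\beta_i$ all become independent of $n$ and $\tau$; the resulting constants then depend only on $k$, and the powers of $\tau$ are tracked mechanically from $\mathrm dt=\tau\,\mathrm d\hat t$ and $\partial_t=\tau^{-1}\partial_{\hat t}$. Setting $\hat x(\hat t):=x(T_n(\hat t))$ one has the scaling identities $\norm{x}_{L^2(I_n;L^2)}^2=\tau\,\norm{\hat x}_{L^2(\hat I;L^2)}^2$, $\norm{\partial_t x}_{L^2(I_n;L^2)}^2=\tau^{-1}\norm{\partial_{\hat t}\hat x}_{L^2(\hat I;L^2)}^2$ and $\norm{x(t_{n-1}^{+})}_{L^2(\Omega)}=\norm{\hat x(0)}_{L^2(\Omega)}$, which reduce each of \eqref{eq:dg_auxiliary_III-1}--\eqref{eq:dg_auxiliary_III-5} to a fixed inequality on the finite-dimensional space $\mathbb{P}_k(\hat I;L^2)$.

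The estimates \eqref{eq:dg_auxiliary_III-3}, \eqref{eq:dg_auxiliary_III-4} and \eqref{eq:dg_auxiliary_III-5} are then routine. For \eqref{eq:dg_auxiliary_III-5} I would invoke the equivalence of norms on $\mathbb{P}_k(\hat I;L^2)$ to bound $\norm{\partial_{\hat t}\hat x}_{L^2(\hat I;L^2)}$ by $\norm{\hat x}_{L^2(\hat I;L^2)}$, the scaling then producing the factor $\tau^{-1}$. For \eqref{eq:dg_auxiliary_III-3} and \eqref{eq:dg_auxiliary_III-4} the key observation is that $x_\beta(t_{n,j}^{\textnormal{G}})=\beta_j\,\partial_t x(t_{n,j}^{\textnormal{G}})$, because $L_{n,i}^{\textnormal{G}}(t_{n,j}^{\textnormal{G}})=\delta_{ij}$. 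Since $\partial_t x$ and $x_\beta$ both lie in $\mathbb{P}_{k-1}(I_n;L^2)$, the integrands $\scalar{\partial_t x}{x_\beta}$, $\norm{x_\beta}^2$ and $\norm{\partial_t x}^2$ are polynomials in $t$ of degree at most $2k-2\le 2k-1$, so the $k$-point Gauss rule \eqref{eq:quadrature_g} integrates them exactly and turns each quantity into a positive weighted nodal sum $\tau\sum_j\hat\omega_j^{\textnormal{G}}\gamma_j\norm{\partial_t x(t_{n,j}^{\textnormal{G}})}^2$ with $\gamma_j\in\{\beta_j,\beta_j^2,1\}$. As $\beta_j=(\hat t_j^{\textnormal{G}})^{-1}$ is a positive constant bounded above and below in terms of $k$ alone, the three sums are mutually equivalent, which gives \eqref{eq:dg_auxiliary_III-3} and \eqref{eq:dg_auxiliary_III-4}.

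For \eqref{eq:dg_auxiliary_III-2} I would combine exactness with a nodal norm equivalence. The product $\scalar{\sum_{i=1}^k\beta_i x_i L_{n,i}^{\textnormal{G}}}{x}$ has degree $2k-1$, so Gauss exactness yields $\int_{I_n}\scalar{\,\cdot\,}{x}\,\mathrm dt=\tau\sum_j\hat\omega_j^{\textnormal{G}}\beta_j\norm{x(t_{n,j}^{\textnormal{G}})}^2$. On the other hand the $k+1$ values at the $\textnormal{G},0$ nodes $0,\hat t_1^{\textnormal{G}},\dots,\hat t_k^{\textnormal{G}}$ determine an element of $\mathbb{P}_k$, so $\norm{\hat x(0)}^2+\sum_j\norm{\hat x(\hat t_j^{\textnormal{G}})}^2$ is a norm on $\mathbb{P}_k(\hat I;L^2)$ equivalent to $\norm{\hat x}_{L^2(\hat I;L^2)}^2$. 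Bounding the nodal sum from below by the positivity of $\hat\omega_j^{\textnormal{G}}\beta_j$ and absorbing $\norm{\hat x(0)}^2$ into the $C_2$-term then delivers \eqref{eq:dg_auxiliary_III-2}.

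The main obstacle is \eqref{eq:dg_auxiliary_III-1}, where the precise choice $\beta_i=(\hat t_i^{\textnormal{G}})^{-1}$, $\beta_0=1$, is essential (for $\beta_i\equiv 1$ the form would collapse to $\tfrac12(\norm{\hat x(1)}^2-\norm{\hat x(0)}^2)$, which is not coercive). After scaling, the task is to show $\int_{\hat I}\scalar{\hat{\tilde x}}{\partial_{\hat t}\hat x}\,\mathrm d\hat t+C_2\norm{\hat x(0)}^2\ge C_1\norm{\hat x}_{L^2(\hat I;L^2)}^2$, where $\hat{\tilde x}:=\sum_i\beta_i x_i\hat L_i^{\textnormal{G},0}$ denotes the scaled analogue of $\sum_i\beta_i x_i L_{n,i}^{\textnormal{G},0}$. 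I would first prove coercivity on the hyperplane $\{\hat x(0)=0\}$: writing $\hat x=\hat t\,r$ with $r\in\mathbb{P}_{k-1}$, the scalar polynomial $g(\hat t):=\norm{\hat x(\hat t)}_{L^2(\Omega)}^2=\hat t^2\norm{r}_{L^2(\Omega)}^2$ satisfies $g(0)=0$, so $\hat t^{-1}g'(\hat t)=2\norm{r}^2+2\hat t\scalar{r}{r'}$ is again a polynomial, now of degree $2k-2$. Using $\hat{\tilde x}(\hat t_j^{\textnormal{G}})=(\hat t_j^{\textnormal{G}})^{-1}\hat x(\hat t_j^{\textnormal{G}})$ together with Gauss exactness, the form collapses to $\int_{\hat I}\scalar{\hat{\tilde x}}{\partial_{\hat t}\hat x}\,\mathrm d\hat t=\tfrac12\int_{\hat I}\hat t^{-1}g'\,\mathrm d\hat t=\int_{\hat I}\norm{r}^2\,\mathrm d\hat t+\int_{\hat I}\hat t\scalar{r}{r'}\,\mathrm d\hat t$, and one integration by parts in the last integral gives $\tfrac12\norm{r}_{L^2(\hat I;L^2)}^2+\tfrac12\norm{r(1)}^2\ge\tfrac12\norm{\hat x}_{L^2(\hat I;L^2)}^2$, where the final step uses $\hat t\le 1$. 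To remove the restriction $\hat x(0)=0$ I would split a general $\hat x=\hat x_W+\hat x(0)\,\hat L_0^{\textnormal{G},0}$ with $\hat x_W(0)=0$, apply the coercivity just shown to $\hat x_W$, bound the finitely many cross terms of the (bounded) bilinear form $\scalar{\widetilde{(\cdot)}}{\partial_{\hat t}(\cdot)}$ by Young's inequality, and absorb all $\hat x(0)$-contributions into a sufficiently large $C_2\norm{\hat x(0)}^2$; scaling back reinstates the factor $\tau$ and yields \eqref{eq:dg_auxiliary_III-1}. The delicate point is the recognition that the weights $(\hat t_j^{\textnormal{G}})^{-1}$ are exactly what makes $\hat t^{-1}g'$ a polynomial of degree $\le 2k-1$, so that Gauss exactness converts the awkward weighted nodal sum into a clean integral amenable to integration by parts.
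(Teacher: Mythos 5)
Your proposal is correct, and for parts \eqref{eq:dg_auxiliary_III-2}--\eqref{eq:dg_auxiliary_III-5} it follows essentially the same path as the paper: Gauss exactness of \eqref{eq:quadrature_g} turns each integral into a weighted nodal sum, the weights $\hat{\omega}_{j}^{\textnormal{G}}\beta_{j}$ or $\hat{\omega}_{j}^{\textnormal{G}}\beta_{j}^{2}$ are bounded above and below by constants depending only on $k$, and the nodal norm equivalence (the paper's \eqref{eq:prf_dg_auxiliary_I-3}) together with the affine scaling to $\hat{I}$ finishes the job. The genuine difference is in the crux, \eqref{eq:dg_auxiliary_III-1}. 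The paper reduces the quadratic form to the matrix $\bm{M}=(m_{ij})$ with $m_{ij}=\int_{I_{n}}L_{n,i}^{\textnormal{G}}\,\partial_{t}L_{n,j}^{\textnormal{G},0}\,\mathrm{d}t$, splits off the $j=0$ column by Cauchy--Schwarz and Young, and then \emph{cites} the positive definiteness of $\bm{D}^{-1/2}\bm{M}\bm{D}^{1/2}$ from Karakashian's Lemma 2.1 as a black box. You instead prove the coercivity directly: on the subspace $\{\hat{x}(0)=0\}$ you factor $\hat{x}=\hat{t}\,r$, observe that the choice $\beta_{j}=(\hat{t}_{j}^{\textnormal{G}})^{-1}$ makes $\hat{t}^{-1}(\norm{\hat{x}}^{2})'=2\norm{r}^{2}+2\hat{t}\scalar{r}{r'}$ a polynomial of degree $2k-2$ so that Gauss exactness converts the nodal sum into $\tfrac{1}{2}\int_{\hat{I}}\hat{t}^{-1}(\norm{\hat{x}}^{2})'\,\mathrm{d}\hat{t}$, and one integration by parts yields $\tfrac{1}{2}\norm{r}_{L^{2}}^{2}+\tfrac{1}{2}\norm{r(1)}^{2}\geq\tfrac{1}{2}\norm{\hat{x}}_{L^{2}}^{2}$; the general case follows by splitting off $\hat{x}(0)\hat{L}_{0}^{\textnormal{G},0}$ and absorbing the cross terms into the $C_{2}$-term, exactly as the paper does for its $x_{0}$-contribution. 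Your route buys a self-contained, elementary replacement for the cited positive-definiteness lemma and makes transparent \emph{why} the weights $(\hat{t}_{j}^{\textnormal{G}})^{-1}$ are the right ones; the paper's route is shorter on the page but leans on an external result. Both are sound, and the bookkeeping of the $\tau$-powers and of the $\norm{x(t_{n-1}^{+})}_{L^{2}(\Omega)}^{2}$ terms in your version matches the statement of the lemma.
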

    
    \begin{proof}
        The results are proven in Appendix~\ref{sec:proof_auxiliary_III}.
    \end{proof}

    \section{Variational formulations}\label{sec:4}
    To begin with, we introduce the variable $ \bm{v} := \partial_{t} \bm{u} $ and rewrite problem \eqref{eq:dg_dyn_biot} as
    \begin{subequations}
        \label{eq:dg_dyn_biot_velo}
        \begin{alignat}{2}
            \bar{\rho}  \partial_{t} \bm{v} - 2 \mu \divv(\bm{\epsilon}(\bm{u})) - \lambda  \nabla \divv(\bm{u}) + \alpha  \nabla p + \rho_{f}  \partial_{t} \bm{w} & = \bm{f} & \quad \text{in $ \Omega \times \left(0, T\right] $}, \label{eq:dg_dyn_biot_velo-1} \\
            \partial_{t} \bm{u} - \bm{v} & = \bm{0} & \quad \text{in $ \Omega \times \left(0, T\right] $}, \label{eq:dg_dyn_biot_velo-2} \\
            \rho_{f}  \partial_{t} \bm{v} + \rho_{w}  \partial_{t} \bm{w} + \bm{K}^{-1} \bm{w} + \nabla p & = \bm{g} & \quad \text{in $ \Omega \times \left(0, T\right] $}, \label{eq:dg_dyn_biot_velo-3} \\
            s_{0}  \partial_{t} p + \alpha \divv(\partial_{t} \bm{u}) + \divv(\bm{w}) & = 0 & \quad \text{in $ \Omega \times \left(0, T\right] $}. \label{eq:dg_dyn_biot_velo-4}
        \end{alignat}
    \end{subequations}
    In the following, we present a time-continuous as well as a time-discrete variational formulation of problem~\eqref{eq:dg_dyn_biot_velo}.
    For the discrete problem, we utilize a discontinuous Galerkin methods in space.

    \subsection{Space-time variational problem}
    In order to give a space-time variational formulation of problem~\eqref{eq:dg_dyn_biot_velo}, we define the following spaces
    \begin{align}
        \begin{aligned}
            \bm{U} = \bm{V} & := \bm{H}_{0}^{1}(\Omega) = \{ \bm{u} \in \bm{H}^{1}(\Omega) : \bm{u} = \bm{0} \text{ on } \partial \Omega \}, \\
            \bm{W} & := \bm{H}_{0}(\divv, \Omega) = \{ \bm{w} \in \bm{H}(\divv, \Omega) : \bm{w} \cdot \bm{n} = 0 \text{ on } \partial \Omega \}, \\
            P & := L_{0}^{2}(\Omega)
            = \{ p \in L^{2}(\Omega) : \int_{\Omega} p  \mathrm{d}\bm{x} = 0 \}, \\
            \bm{X} & := \bm{U} \times \bm{V} \times \bm{W} \times P,
        \end{aligned}
    \end{align}
    and bilinear forms
    \begin{subequations}
        \begin{align}
            a(\bm{u}, \bm{\varphi}^{\bm{u}}) & := 2 \mu \scalar{\bm{\epsilon}(\bm{u})}{\bm{\epsilon}(\bm{\varphi}^{\bm{u}})} + \lambda \scalar{\divv(\bm{u})}{\divv(\bm{\varphi}^{\bm{u}})}, \\
            b(p, \bm{\varphi}^{\bm{w}}) & := \scalar{p}{\divv(\bm{\varphi}^{\bm{w}})},
        \end{align}
    \end{subequations}
    for $ \bm{u}, \bm{\varphi}^{\bm{u}} \in \bm{U}, \bm{\varphi}^{\bm{w}} \in \bm{W}, p \in P$. Next, we pose the time-continuous weak problem:
    
    \begin{problem}
        \label{pr:dg_continuous_weak_formulation}
        Let $ \bm{f}, \bm{g} \in W^{1, 1}([0, T]; \bm{L}^{2}(\Omega)) $ and $ (\bm{u}_{0}, \bm{v}_{0}, \bm{w}_{0}, p_{0}) \in \bm{X} $ be given. Additionally, let $ p_{0} \in H_{0}^{1}(\Omega) $  and $ \divv(2 \mu \bm{\epsilon}(\bm{u}_{0}) + \lambda \divv(\bm{u}_{0}) \bm{I} - \alpha  p_{0}  \bm{I}) \in \bm{L}^{2}(\Omega) $.
        Find $ (\bm{u}, \bm{v}, \bm{w}, p) \in L^{2}(I; \bm{X}) $ such that
        \begin{align}
            \bm{u}(\cdot, 0) & = \bm{u}_{0}, &
            \bm{v}(\cdot, 0) & = \bm{v}_{0}, &
            \bm{w}(\cdot, 0) & = \bm{w}_{0}, &
            p(\cdot, 0) & = p_{0},
        \end{align}
        and
        \begin{subequations}
            \begin{align}
                \int_{I} \scalar{\bar{\rho}  \partial_{t} \bm{v} + \rho_{f}  \partial_{t} \bm{w}}{\bm{\varphi}^{\bm{u}}}
                + a(\bm{u}, \bm{\varphi}^{\bm{u}})
                - \scalar{\alpha  p}{\divv(\bm{\varphi}^{\bm{u}})} \mathrm{d}t
                & = \int_{I} \scalar{\bm{f}}{\bm{\varphi}^{\bm{u}}} \mathrm{d}t, \\
                \int_{I} \scalar{\partial_{t} \bm{u} - \bm{v}}{\bm{\varphi}^{\bm{v}}} \mathrm{d}t
                & = 0, \\
                \int_{I} \scalar{\rho_{f}  \partial_{t} \bm{v} + \rho_{w}  \partial_{t} \bm{w} + \bm{K}^{-1} \bm{w}}{\bm{\varphi}^{\bm{w}}}
                - b(p, \bm{\varphi}^{\bm{w}}) \mathrm{d}t
                & = \int_{I} \scalar{\bm{g}}{\bm{\varphi}^{\bm{w}}} \mathrm{d}t, \\
                \int_{I} \scalar{s_{0}  \partial_{t} p + \alpha \divv(\partial_{t} \bm{u})}{\varphi^{p}}
                + b(\varphi^{p}, \bm{w}) \mathrm{d}t
                & = 0,
            \end{align}
        \end{subequations}
        for all $ (\bm{\varphi}^{\bm{u}}, \bm{\varphi}^{\bm{v}}, \bm{\varphi}^{\bm{w}}, \varphi^{p}) \in L^{2}(I; \bm{X}) $.
    \end{problem}
    The well-posedness of Problem~\ref{pr:dg_continuous_weak_formulation} is proven in \cite{Mielke2013Homo}.

    \subsection{Discontinuous Galerkin method}
    \label{subsec:dg_method}
    Here, we utilize a continuous Galerkin-Petrov (cGP($k$)) method in time of degree $k \in \mathbb{N}$ for problem~\eqref{eq:dg_dyn_biot_velo}.
    With this time stepping scheme, we end up with a globally continuous solution in time.
    For the spatial discretization, we exploit a discontinuous Galerkin method for the displacement $ \bm{u} $.
    To this end, we define the following discrete spaces
    \begin{align}
        \begin{aligned}
            \bm{U}_{h} = \bm{V}_{h} & := \{ \bm{u}_{h} \in \bm{H}(\divv, \Omega) : \bm{u}_{h}\big|_{K} \in \bm{U}(K) 
            \quad \forall K \in \mathcal{T}_{h} \text{ and } \bm{u} \cdot \bm{n} = 0 \text{ on $ \partial \Omega $} \}, \\
            \bm{W}_{h} & := \{ \bm{w}_{h} \in \bm{H}(\divv, \Omega) : \bm{w}_{h}\big|_{K} \in \bm{W}(K) 
            \quad \forall K \in \mathcal{T}_{h} \text{ and } \bm{w} \cdot \bm{n} = 0 \text{ on $ \partial \Omega $} \}, \\
            P_{h} & := \{ p_{h} \in L_{0}^{2}(\Omega) : p_{h}\big|_{K} \in P(K) \quad \forall K \in \mathcal{T}_{h} \}, \\
            \bm{X}_{h} & := \bm{U}_{h} \times \bm{V}_{h} \times \bm{W}_{h} \times P_{h},
        \end{aligned}
    \end{align}
    where $ \bm{U}(K) \times \bm{W}(K) \times P(K) $ here are chosen to be $ \text{BDM}_{\ell+1}(K) \times \text{BDM}_{\ell+1}(K) \times \mathbb{P}_{\ell}(K) $ for $ \ell \geq 0 $.
    The space $\text{BDM}_{\ell}(K)$ denotes the Brezzi-Douglas-Marini space of degree $ \ell \in \mathbb{N} $.
    
    In order to obtain mass conservation, we need $ \divv(\bm{U}_{h}) = P_{h} $ and $ \divv(\bm{W}_{h}) = P_{h} $, see \cite{Cockburn2007Anot}.
    For the discontinuous Galerkin method, we will partition the domain $\Omega$ by a shape-regular triangulation $ \mathcal{T}_{h} $.
    Let $ \average{\cdot} $ be the average and $ \llbracket \cdot \rrbracket $ the jump defined by
    \begin{align*}
        \average{\bm{\tau}} & := \frac{1}{2} \left( \bm{\tau}\big|_{\partial K_{1} \cap e} \bm{n}_{1} - \bm{\tau}\big|_{\partial K_{2} \cap e} \bm{n}_{2} \right), &
        \llbracket \bm{y} \rrbracket & := \bm{y}\big|_{\partial K_{1} \cap e} - \bm{y}\big|_{\partial K_{2} \cap e},
    \end{align*}
    for any interior edge $ e := K_{1} \cap K_{2} $ and
    \begin{align*}
        \average{\bm{\tau}} & := \bm{\tau}\big|_{e} \bm{n}, &
        \llbracket \bm{y} \rrbracket & := \bm{y}\big|_{e},
    \end{align*}
    for boundary edge $ e $, where $ \bm{\tau} $ denotes a matrix-valued and $ \bm{y} $ a vector-valued function, see e.g.~\cite{Hong2018parameter}.
    Additionally, we specify the tangential component by $ \tang(\bm{z}) := \bm{z} - (\bm{z} \cdot \bm{n}) \bm{n} $.
    Next, we introduce the following mesh-dependent bilinear forms
    \begin{align*}
        a_{h}(\bm{u}_{\tau, h}, \bm{\varphi}_{\tau, h}^{\bm{u}}) & := \sum_{K \in \mathcal{T}_{h}} 2 \mu \int_{K} \bm{\epsilon}(\bm{u}_{\tau, h}) : \bm{\epsilon}(\bm{\varphi}_{\tau, h}^{\bm{u}}) \mathrm{d}\bm{x}
        - \sum_{e \in \mathcal{E}_{h}} 2 \mu \int_{e} \average{\bm{\epsilon}(\bm{u}_{\tau, h})} \cdot \llbracket \tang(\bm{\varphi}_{\tau, h}^{\bm{u}}) \rrbracket \mathrm{d}s \\
        & \quad - \sum_{e \in \mathcal{E}_{h}} 2 \mu \int_{e} \average{\bm{\epsilon}(\bm{\varphi}_{\tau, h}^{\bm{u}})} \cdot \llbracket \tang(\bm{u}_{\tau, h}) \rrbracket \mathrm{d}s
        + \sum_{e \in \mathcal{E}_{h}} 2 \mu \eta h_{e}^{-1} \int_{e} \llbracket \tang(\bm{u}_{\tau, h}) \rrbracket \cdot \llbracket \tang(\bm{\varphi}_{\tau, h}^{\bm{u}}) \rrbracket \mathrm{d}s \\
        & \quad + \lambda \scalar{\divv(\bm{u}_{\tau, h})}{\divv(\bm{\varphi}_{\tau, h}^{\bm{u}})}
    \end{align*}
    and
    \begin{align*}
        b_{h}(p_{\tau, h}, \bm{\varphi}_{\tau, h}^{\bm{w}})
        := \scalar{p_{\tau, h}}{\divv(\bm{\varphi}_{\tau, h}^{\bm{w}})},
    \end{align*}
    where $ h_{e} $ is the size of the facet, $\mathcal{E}_{h}$ the set of all edges, $e$ the common boundary
    of two element in $\mathcal{T}_{h}$ and $\eta$ is a stabilization parameter, cf \cite{Hong2018parameter}.
    Then, using block vectors for all unknown fields, i.e.,
    $ \bm{x}_{\tau, h} := (\bm{u}_{\tau, h}^{\top}, \bm{v}_{\tau, h}^{\top}, \bm{w}_{\tau, h}^{\top}, p_{\tau, h})^{\top} $
    and approximation of initial values, i.e.,
    $ \bm{x}_{0, h} := (\bm{u}_{0, h}^{\top}, \bm{v}_{0, h}^{\top}, \bm{w}_{0, h}^{\top}, p_{0, h}) $ enables us to give a short representation of the discontinuous Galerkin method:
    
    \begin{problem}
        \label{pr:dg_discrete_weak_formulation}
        Let $ k, \ell \in \mathbb{N} $ and $ \bm{x}_{\tau, h}^{n-1} := \bm{x}_{\tau, h}(t_{n-1}) $ be given where $ \bm{x}_{\tau, h}(t_{0}) = \bm{x}_{0, h} $. Find $ \bm{x}_{\tau, h} \in \mathbb{P}_{k}(I_{n}; \bm{X}_{h}) $ such that $ \bm{x}_{\tau, h}(t_{n-1}) = \bm{x}_{\tau, h}^{n-1} $ and
        \begin{subequations}
            \label{eq:dg_dyn_biot_disc_weak_form}
            \begin{align}
                \int_{I_{n}} \scalar{\bar{\rho}  \partial_{t} \bm{v}_{\tau, h} + \rho_{f}  \partial_{t} \bm{w}_{\tau, h}}{\bm{\varphi}_{\tau, h}^{\bm{u}}}
                + a_{h}(\bm{u}_{\tau, h}, \bm{\varphi}_{\tau, h}^{\bm{u}})
                - \scalar{\alpha  p_{\tau, h}}{\divv(\bm{\varphi}_{\tau, h}^{\bm{u}})} \mathrm{d}t
                & = \int_{I_{n}} \scalar{\bm{\mathcal{I}}_{\tau}^{*} \bm{f}}{\bm{\varphi}_{\tau, h}^{\bm{u}}} \mathrm{d}t, \label{eq:dg_dyn_biot_disc_weak_form-1} \\
                \int_{I_{n}} \scalar{\partial_{t} \bm{u}_{\tau, h} - \bm{v}_{\tau, h}}{\bm{\varphi}_{\tau, h}^{\bm{v}}} \mathrm{d}t
                & = 0, \label{eq:dg_dyn_biot_disc_weak_form-2} \\
                \int_{I_{n}} \scalar{\rho_{f}  \partial_{t} \bm{v}_{\tau, h} + \rho_{w}  \partial_{t} \bm{w}_{\tau, h} + \bm{K}^{-1} \bm{w}_{\tau, h}}{\bm{\varphi}_{\tau, h}^{\bm{w}}}
                - b_{h}(p_{\tau, h}, \bm{\varphi}_{\tau, h}^{\bm{w}}) \mathrm{d}t
                & = \int_{I_{n}} \scalar{\bm{\mathcal{I}}_{\tau}^{*} g}{\bm{\varphi}_{\tau, h}^{\bm{w}}} \mathrm{d}t, \label{eq:dg_dyn_biot_disc_weak_form-3} \\
                \int_{I_{n}} \scalar{s_{0}  \partial_{t} p_{\tau, h} + \alpha \divv(\partial_{t} \bm{u}_{\tau, h})}{\varphi_{\tau, h}^{p}}
                + b_{h}(\varphi_{\tau, h}^{p}, \bm{w}_{\tau, h}) \mathrm{d}t
                & = 0, \label{eq:dg_dyn_biot_disc_weak_form-4}
            \end{align}
        \end{subequations}
        for all $ (\bm{\varphi}_{\tau, h}^{\bm{u}}, \bm{\varphi}_{\tau, h}^{\bm{v}}, \bm{\varphi}_{\tau, h}^{\bm{w}}, \varphi_{\tau, h}^{p}) \in \mathbb{P}_{k-1}(I_{n}; \bm{X}_{h}) $.
    \end{problem}
    Recall that $ \mathcal{I}_{\tau}^{*} $ denotes any interpolation operator that locally corresponds to $ k+1 $ points on $I_{n}$, $ n=1, \ldots,N$, as described in Section~\ref{subsec:quadrature_formulas}.

    \section{Error analysis}\label{sec:5}
    In this section, we derive an error estimate for the solution of Problem~\ref{pr:dg_discrete_weak_formulation}.
    For this purpose, we define the following norms
    \begin{subequations}
        \label{eq:dg_definition_DG_norm}
        \begin{align}
            \norm{\bm{u}_{h}}_{\text{DG}}^{2} & := \sum_{K \in \mathcal{T}_{h}} \norm{\nabla \bm{u}_{h}}_{L^{2}(K)}^{2}
            + \sum_{e \in \mathcal{E}_{h}} h_{e}^{-1} \norm{\llbracket \tang(\bm{u}_{h}) \rrbracket}_{L^{2}(e)}^{2}
            + \sum_{K \in \mathcal{T}_{h}} h_{K}^{2}  \lvert \bm{u}_{h} \rvert_{H^{2}(K)}^{2}, \\
            \norm{\bm{u}_{h}}_{\bm{U}_{h}}^{2} & := \norm{\bm{u}_{h}}_{\text{DG}}^{2} + \norm{\divv(\bm{u}_{h})}_{L^{2}(\Omega)}^{2},
        \end{align}
    \end{subequations}
    where $ h_{K} $ represents the mesh sizes of cell $ K $.

    Next, we deduce a formulation of \eqref{eq:dg_dyn_biot_velo} that is suitable for our analysis.
    Therefore, we evaluate the exact solution of \eqref{eq:dg_dyn_biot_velo} in the discrete time points $ t_{n, i}^{\text{GL}},  i = 0, \ldots, k $ on subinterval $ I_{n} $, multiply then by the corresponding Lagrange polynomial $ L_{n, i}^{\text{GL}} $ and sum over the resulting $ k + 1 $ equations, respectively. After that, we obtain the following system
    \begin{subequations}
        \begin{align}
            \bar{\rho}  \bm{\mathcal{I}}_{\tau}^{\text{GL}}(\partial_{t} \bm{v}) - 2 \mu \divv(\bm{\epsilon}(\bm{\mathcal{I}}_{\tau}^{\text{GL}} \bm{u})) - \lambda  \nabla \divv(\bm{\mathcal{I}}_{\tau}^{\text{GL}} \bm{u}) + \alpha  \nabla \mathcal{I}_{\tau}^{\text{GL}} p + \rho_{f}  \bm{\mathcal{I}}_{\tau}^{\text{GL}}(\partial_{t} \bm{w}) & = \bm{\mathcal{I}}_{\tau}^{\text{GL}} \bm{f}, \\
            \bm{\mathcal{I}}_{\tau}^{\text{GL}}(\partial_{t} \bm{u}) - \bm{\mathcal{I}}_{\tau}^{\text{GL}} \bm{v} & = \bm{0}, \\
            \rho_{f}  \bm{\mathcal{I}}_{\tau}^{\text{GL}}(\partial_{t} \bm{v}) + \rho_{w}  \bm{\mathcal{I}}_{\tau}^{\text{GL}}(\partial_{t} \bm{w}) + \bm{K}^{-1} \bm{\mathcal{I}}_{\tau}^{\text{GL}} \bm{w} + \nabla \mathcal{I}_{\tau}^{\text{GL}} p & = \bm{\mathcal{I}}_{\tau}^{\text{GL}} \bm{g}, \\
            s_{0}  \mathcal{I}_{\tau}^{\text{GL}}(\partial_{t} p) + \alpha \divv(\bm{\mathcal{I}}_{\tau}^{\text{GL}}(\partial_{t} \bm{u})) + \divv(\bm{\mathcal{I}}_{\tau}^{\text{GL}} \bm{w}) & = 0.
        \end{align}
    \end{subequations}
    Now we introduce the residual of a function $ y \in C^{1}(\overline{I}_{n}; B) $ defined by
    \begin{align}
        \label{eq:dg_definition_r_y}
        r_{y} := \partial_{t} \mathcal{I}_{\tau}^{\text{GL}} y - \mathcal{I}_{\tau}^{\text{GL}}(\partial_{t} y).
    \end{align}
    With this, we can reformulate the system above as
    \begin{subequations}
        \label{eq:dg_dyn_biot_interpolation_form}
        \begin{align}
            \bar{\rho}  \partial_{t} \bm{\mathcal{I}}_{\tau}^{\text{GL}} \bm{v} - 2 \mu \divv(\bm{\epsilon}(\bm{\mathcal{I}}_{\tau}^{\text{GL}} \bm{u})) - \lambda  \nabla \divv(\bm{\mathcal{I}}_{\tau}^{\text{GL}} \bm{u}) + \alpha  \nabla \mathcal{I}_{\tau}^{\text{GL}} p + \rho_{f}  \partial_{t} \bm{\mathcal{I}}_{\tau}^{\text{GL}} \bm{w} & = \bm{\mathcal{I}}_{\tau}^{\text{GL}} \bm{f} + \bar{\rho}  \bm{r}_{\bm{v}} + \rho_{f}  \bm{r}_{\bm{w}}, \label{eq:dg_dyn_biot_interpolation_form-1} \\
            \partial_{t} \bm{\mathcal{I}}_{\tau}^{\text{GL}} \bm{u} - \bm{\mathcal{I}}_{\tau}^{\text{GL}} \bm{v} & = \bm{r}_{\bm{u}}, \label{eq:dg_dyn_biot_interpolation_form-2} \\
            \rho_{f}  \partial_{t} \bm{\mathcal{I}}_{\tau}^{\text{GL}} \bm{v} + \rho_{w}  \partial_{t} \bm{\mathcal{I}}_{\tau}^{\text{GL}} \bm{w} + \bm{K}^{-1} \bm{\mathcal{I}}_{\tau}^{\text{GL}} \bm{w} + \nabla \mathcal{I}_{\tau}^{\text{GL}} p & = \bm{\mathcal{I}}_{\tau}^{\text{GL}} \bm{g} + \rho_{f}  \bm{r}_{\bm{v}} + \rho_{w}  \bm{r}_{\bm{w}}, \label{eq:dg_dyn_biot_interpolation_form-3} \\
            s_{0}  \partial_{t} \mathcal{I}_{\tau}^{\text{GL}} p + \alpha \divv(\partial_{t} \bm{\mathcal{I}}_{\tau}^{\text{GL}} \bm{u}) + \divv(\bm{\mathcal{I}}_{\tau}^{\text{GL}} \bm{w}) & = s_{0}  r_{p} + \alpha \divv(\bm{r}_{\bm{u}}). \label{eq:dg_dyn_biot_interpolation_form-4}
        \end{align}
    \end{subequations}
    We define the following projection operators
    \begin{subequations}
        \label{eq:dg_projection_operators}
        \begin{alignat}{3}
            \bm{\mathcal{P}}_{1} & : \bm{U} \to \bm{U}_{h}, \quad &
            a_{h}(\bm{\mathcal{P}_{1}} \bm{y}, \bm{\varphi}_{h}^{\bm{u}}) & = a_{h}(\bm{y}, \bm{\varphi}_{h}^{\bm{u}}) & \qquad & \forall \bm{\varphi}_{h}^{\bm{u}} \in \bm{U}_{h}, \label{eq:dg_projection_operators-1} \\
            \bm{\mathcal{P}}_{2} & : \bm{W} \to \bm{W}_{h}, \quad &
            \scalar{\divv(\bm{\mathcal{P}}_{2} \bm{y})}{\varphi_{h}^{p}} & = \scalar{\divv(\bm{y})}{\varphi_{h}^{p}} & \qquad & \forall \varphi_{h}^{p} \in P_{h}, \label{eq:dg_projection_operators-2} \\
            \mathcal{P}_{3} & : P \to P_{h}, \quad &
            \scalar{\mathcal{P}_{3} y}{\varphi_{h}^{p}} & = \scalar{y}{\varphi_{h}^{p}} & \qquad & \forall \varphi_{h}^{p} \in P_{h}, \label{eq:dg_projection_operators-3}
        \end{alignat}
    \end{subequations}
    that are used for our splitting of the discretization error below.
    Some error estimates for these projection operators are summarized in the following, see \cite[Sec.~4.3]{Arnold2001Unif}, \cite[Prop.~2.5.4]{Boffi2013Mixe}, \cite[Cor.~1.109]{Ern2004Theo}:
    \begin{lemma}[Projection errors]
        \label{lem:dg_projection_errors}
        The projection estimates
        \begin{align*}
            \norm{\bm{u} - \bm{\mathcal{P}}_{1} \bm{u}}_{\bm{L}^{2}(\Omega)}
            & \leq c h^{\ell+1}  \norm{\bm{u}}_{\bm{H}^{\ell+1}(\Omega)}, &
            \norm{\divv(\bm{u} - \bm{\mathcal{P}}_{1} \bm{u})}_{L^{2}(\Omega)}
            & \leq c h^{\ell+1}  \norm{\bm{u}}_{\bm{H}^{\ell+2}(\Omega)}, \\
            \norm{\bm{u} - \bm{\mathcal{P}}_{1} \bm{u}}_{\textnormal{DG}}
            & \leq c h^{\ell+1}  \norm{\bm{u}}_{\bm{H}^{\ell+2}(\Omega)}, &
            \norm{p - \mathcal{P}_{3} p}_{L^{2}(\Omega)}
            & \leq c h^{\ell+1}  \norm{p}_{H^{\ell+1}(\Omega)}, \\
            \norm{\bm{w} - \bm{\mathcal{P}}_{2} \bm{w}}_{\bm{L}^{2}(\Omega)}
            & \leq c h^{\ell+1}  \norm{\bm{w}}_{\bm{H}^{\ell+1}(\Omega)}, &
            \norm{\divv(\bm{w} - \bm{\mathcal{P}}_{2} \bm{w})}_{L^{2}(\Omega)}
            & \leq c h^{\ell+1}  \norm{\bm{w}}_{\bm{H}^{\ell+2}(\Omega)},
        \end{align*}
        are fulfilled for $ \bm{u} \in \bm{H}^{\ell+2}(\Omega) $, $ \bm{w} \in \bm{H}^{\ell+2}(\Omega) $ and $ p \in H^{\ell+1}(\Omega) $.
    \end{lemma}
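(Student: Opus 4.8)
The plan is to treat the three projection operators separately, since each is a classical object whose error bounds are supplied by the cited references; the only estimates carrying genuine content are those for the DG elliptic projection $\bm{\mathcal{P}}_{1}$. I would first record the identifications: $\mathcal{P}_{3}$ is precisely the $L^{2}(\Omega)$-orthogonal projection onto the broken polynomial space $P_{h}$, and $\bm{\mathcal{P}}_{2}$ is taken to be the canonical $\text{BDM}_{\ell+1}$ interpolation operator onto $\bm{W}_{h}$, whose commuting-diagram property $\divv \circ \bm{\mathcal{P}}_{2} = \mathcal{P}_{3} \circ \divv$ is exactly the defining relation~\eqref{eq:dg_projection_operators-2} once one uses $\divv(\bm{W}_{h}) = P_{h}$. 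This fixes the (otherwise underdetermined) operator to the standard choice and supplies the structural identity needed below.

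For $\mathcal{P}_{3}$, since $P_{h}$ contains all element-wise polynomials of degree $\ell$ and the $L^{2}$ projection is the best $L^{2}$ approximation, the bound $\norm{p - \mathcal{P}_{3} p}_{L^{2}(\Omega)} \leq c\,h^{\ell+1}\norm{p}_{H^{\ell+1}(\Omega)}$ follows cell by cell from the Bramble--Hilbert lemma and a scaling argument, as in \cite[Cor.~1.109]{Ern2004Theo}. For $\bm{\mathcal{P}}_{2}$, the $\bm{L}^{2}$ estimate is the standard $\text{BDM}_{\ell+1}$ interpolation bound matched to the available $\bm{H}^{\ell+1}$ regularity, while the divergence estimate follows by combining the commuting relation with the $\mathcal{P}_{3}$ bound applied to $\divv \bm{w}$, namely $\norm{\divv(\bm{w} - \bm{\mathcal{P}}_{2} \bm{w})}_{L^{2}(\Omega)} = \norm{\divv \bm{w} - \mathcal{P}_{3}(\divv \bm{w})}_{L^{2}(\Omega)} \leq c\,h^{\ell+1}\norm{\divv \bm{w}}_{H^{\ell+1}(\Omega)} \leq c\,h^{\ell+1}\norm{\bm{w}}_{H^{\ell+2}(\Omega)}$, which is \cite[Prop.~2.5.4]{Boffi2013Mixe} and explains the shift in the regularity index.

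The substantive estimates are those for $\bm{\mathcal{P}}_{1}$. Here I would first establish that, for the stabilization parameter $\eta$ chosen large enough, the mesh-dependent form $a_{h}$ is bounded and coercive on $\bm{U}_{h}$ with respect to $\norm{\cdot}_{\bm{U}_{h}}$; Lax--Milgram then yields existence and uniqueness of $\bm{\mathcal{P}}_{1}\bm{u}$, and the defining relation~\eqref{eq:dg_projection_operators-1} is a Galerkin orthogonality that gives C\'ea-type quasi-optimality $\norm{\bm{u} - \bm{\mathcal{P}}_{1}\bm{u}}_{\bm{U}_{h}} \leq c\,\inf_{\bm{v}_{h} \in \bm{U}_{h}} \norm{\bm{u} - \bm{v}_{h}}_{\bm{U}_{h}}$. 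Inserting the $\text{BDM}_{\ell+1}$ interpolant of $\bm{u}$ and invoking its approximation properties in the $\textnormal{DG}$ and divergence parts of the norm yields both the $\textnormal{DG}$ estimate and the divergence estimate at rate $h^{\ell+1}$ with the stated $\bm{H}^{\ell+2}$ regularity, as in \cite[Sec.~4.3]{Arnold2001Unif}. The $\bm{L}^{2}$ estimate is then obtained by an Aubin--Nitsche duality argument: one tests the error against the solution of the adjoint elasticity problem, uses elliptic regularity together with the Galerkin orthogonality, and controls the resulting terms by the energy bound already established.

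I expect the main obstacle to be the coercivity of $a_{h}$ on $\bm{U}_{h}$. This requires two nontrivial ingredients on shape-regular meshes: a broken/discrete Korn inequality to pass from the symmetric gradient $\bm{\epsilon}(\cdot)$ appearing in $a_{h}$ to the full gradient measured in $\norm{\cdot}_{\textnormal{DG}}$, and a discrete trace inequality to absorb the consistency integrals $\average{\bm{\epsilon}(\cdot)} \cdot \llbracket \tang(\cdot) \rrbracket$ into the element, penalty, and $h_{K}^{2}\lvert \cdot \rvert_{H^{2}(K)}^{2}$ contributions of the norm; it is precisely the second-derivative term in the $\textnormal{DG}$ norm that renders these interface traces controllable and fixes the threshold on $\eta$. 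Once coercivity and boundedness are in place, the remaining steps (Lax--Milgram, quasi-optimality, and the duality argument for the $\bm{L}^{2}$ bound) are routine and coincide with the arguments of the cited works, so in the paper these estimates are quoted rather than reproved.
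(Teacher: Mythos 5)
Your proposal is correct and, in substance, coincides with what the paper does: the paper gives no proof of this lemma at all, but simply quotes the estimates from \cite[Sec.~4.3]{Arnold2001Unif}, \cite[Prop.~2.5.4]{Boffi2013Mixe} and \cite[Cor.~1.109]{Ern2004Theo}, and your argument is a faithful reconstruction of the standard proofs in those references (the $L^{2}$ projection for $\mathcal{P}_{3}$, the canonical BDM interpolant with its commuting-diagram property for $\mathcal{P}_{2}$, and coercivity/boundedness, C\'ea quasi-optimality and duality for the elliptic DG projection $\mathcal{P}_{1}$). You also correctly identify the two points that need care and that the paper leaves implicit: the relation \eqref{eq:dg_projection_operators-2} alone does not determine $\bm{\mathcal{P}}_{2}$ and must be read as the canonical interpolant, and coercivity of $a_{h}$ on $\bm{U}_{h}$ rests on a broken Korn inequality together with the $h_{K}^{2}\lvert\cdot\rvert_{H^{2}(K)}^{2}$ term in the DG norm. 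The one place where your route does not literally deliver the stated bound is the first estimate, $\norm{\bm{u}-\bm{\mathcal{P}}_{1}\bm{u}}_{\bm{L}^{2}(\Omega)}\leq c\,h^{\ell+1}\norm{\bm{u}}_{\bm{H}^{\ell+1}(\Omega)}$: the energy-norm bound only gives $h^{\ell+1}\norm{\bm{u}}_{\bm{H}^{\ell+2}}$, and Aubin--Nitsche under convexity gives $h^{\ell+2}\norm{\bm{u}}_{\bm{H}^{\ell+2}}$, so recovering the exact form with only $\bm{H}^{\ell+1}$ regularity on the right requires an additional step (e.g.\ an interpolation-of-operators argument between the duality bounds at consecutive orders); since the paper states this version and later uses it with $\bm{H}^{\ell+1}$ norms, you should either add that step or weaken the claim to the $\bm{H}^{\ell+2}$ version, which suffices for the rest of the analysis.
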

    
    Next, we split the discretization errors into
    \begin{subequations}
        \label{eq:dg_splitting}
        \begin{align}
            \bm{u}_{\tau, h} - \bm{u}
            & = (\bm{u}_{\tau, h} - \bm{z}^{\bm{u}})
            + (\bm{z}^{\bm{u}} - \bm{\mathcal{I}}_{\tau}^{\text{GL}} \bm{u})
            + (\bm{\mathcal{I}}_{\tau}^{\text{GL}} \bm{u} - \bm{u})
            =: \bm{e}_{\bm{u}} - \bm{\eta}_{\bm{u}} - \bm{\eta}_{\bm{u}, \tau}, \label{eq:dg_splitting-1} \\
            \bm{v}_{\tau, h} - \bm{v}
            & = (\bm{v}_{\tau, h} - \bm{z}^{\bm{v}})
            + (\bm{z}^{\bm{v}} - \bm{\mathcal{I}}_{\tau}^{\text{GL}} \bm{v})
            + (\bm{\mathcal{I}}_{\tau}^{\text{GL}} \bm{v} - \bm{v})
            =: \bm{e}_{\bm{v}} - \bm{\eta}_{\bm{v}} - \bm{\eta}_{\bm{v}, \tau}, \label{eq:dg_splitting-2} \\
            \bm{w}_{\tau, h} - \bm{w}
            & = (\bm{w}_{\tau, h} - \bm{z}^{\bm{w}})
            + (\bm{z}^{\bm{w}} - \bm{\mathcal{I}}_{\tau}^{\text{GL}} \bm{w})
            + (\bm{\mathcal{I}}_{\tau}^{\text{GL}} \bm{w} - \bm{w})
            =: \bm{e}_{\bm{w}} - \bm{\eta}_{\bm{w}} - \bm{\eta}_{\bm{w}, \tau}, \label{eq:dg_splitting-3} \\
            p_{\tau, h} - p
            & = (p_{\tau, h} - z^{p})
            + (z^{p} - \mathcal{I}_{\tau}^{\text{GL}} p)
            + (\mathcal{I}_{\tau}^{\text{GL}} p - p)
            =: e_{p} - \eta_{p} - \eta_{p, \tau}, \label{eq:dg_splitting-4}
        \end{align}
    \end{subequations}
    where
    \begin{subequations}
        \label{eq:dg_definition_z}
        \begin{align}
            \bm{z}^{\bm{u}} & := \bm{\mathcal{I}}_{\tau}^{\text{GL}}\left( \int_{t_{n-1}}^{t} \bm{\mathcal{I}}_{\tau}^{\text{GL}} \bm{\mathcal{P}}_{1} \partial_{t} \bm{u}(s) \mathrm{d}s + \bm{\mathcal{P}}_{1} \bm{u}(t_{n-1}) \right) \in \mathbb{P}_{k}(I_{n}; \bm{U}_{h}), \\
            \bm{z}^{\bm{v}} & := \bm{\mathcal{I}}_{\tau}^{\text{GL}}\left( \int_{t_{n-1}}^{t} \bm{\mathcal{I}}_{\tau}^{\text{GL}} \bm{\mathcal{P}}_{1} \partial_{t} \bm{v}(s) \mathrm{d}s + \bm{\mathcal{P}}_{1} \bm{v}(t_{n-1}) \right) \in \mathbb{P}_{k}(I_{n}; \bm{V}_{h}), \\
            \bm{z}^{\bm{w}} & := \bm{\mathcal{I}}_{\tau}^{\text{GL}}\left( \int_{t_{n-1}}^{t} \bm{\mathcal{I}}_{\tau}^{\text{GL}} \bm{\mathcal{P}}_{2} \partial_{t} \bm{w}(s) \mathrm{d}s + \bm{\mathcal{P}}_{2} \bm{w}(t_{n-1}) \right) \in \mathbb{P}_{k}(I_{n}; \bm{W}_{h}), \\
            z^{p} & := \mathcal{I}_{\tau}^{\text{GL}}\left( \int_{t_{n-1}}^{t} \mathcal{I}_{\tau}^{\text{GL}} \mathcal{P}_{3} \partial_{t} p(s) \mathrm{d}s + \mathcal{P}_{3} p(t_{n-1}) \right) \in \mathbb{P}_{k}(I_{n}; P_{h}),
        \end{align}
    \end{subequations}
    using the interpolation operator $ \mathcal{I}_{\tau}^{\text{GL}} $ from Section~\ref{subsec:quadrature_formulas}.
    Note
    that the last term in each equation in \eqref{eq:dg_splitting} can be estimated by \eqref{eq:interpolation_error_GL-1}.
    The expressions in \eqref{eq:dg_definition_z} possess the following properties:
    \begin{lemma}
        \label{lem:dg_identity_z}
        The following identities
        \begin{alignat*}{2}
            \int_{I_{n}} \scalar{\partial_{t} \bm{z}^{\bm{u}} - \bm{\mathcal{I}}_{\tau}^{\textnormal{GL}} \bm{\mathcal{P}}_{1} \partial_{t} \bm{u}}{\bm{\varphi}} \mathrm{d}t
            & = 0 & \qquad & \forall \bm{\varphi} \in \mathbb{P}_{k-1}(I_{n}; \bm{L}^{2}(\Omega)), \\
            \int_{I_{n}} \scalar{\divv(\partial_{t} \bm{z}^{\bm{u}} - \bm{\mathcal{I}}_{\tau}^{\textnormal{GL}} \bm{\mathcal{P}}_{1} \partial_{t} \bm{u})}{\bm{\varphi}} \mathrm{d}t
            & = 0 & \qquad & \forall \bm{\varphi} \in \mathbb{P}_{k-1}(I_{n}; \bm{L}^{2}(\Omega)), \\
            \int_{I_{n}} \scalar{\partial_{t} \bm{z}^{\bm{v}} - \bm{\mathcal{I}}_{\tau}^{\textnormal{GL}} \bm{\mathcal{P}}_{1} \partial_{t} \bm{v}}{\bm{\varphi}} \mathrm{d}t
            & = 0 & \qquad & \forall \bm{\varphi} \in \mathbb{P}_{k-1}(I_{n}; \bm{L}^{2}(\Omega)), \\
            \int_{I_{n}} \scalar{\partial_{t} \bm{z}^{\bm{w}} - \bm{\mathcal{I}}_{\tau}^{\textnormal{GL}} \bm{\mathcal{P}}_{2} \partial_{t} \bm{w}}{\bm{\varphi}} \mathrm{d}t
            & = 0 & \qquad & \forall \bm{\varphi} \in \mathbb{P}_{k-1}(I_{n}; \bm{L}^{2}(\Omega)), \\
            \int_{I_{n}} \scalar{\partial_{t} z^{p} - \mathcal{I}_{\tau}^{\textnormal{GL}} \mathcal{P}_{3} \partial_{t} p}{\varphi} \mathrm{d}t
            & = 0 & \qquad & \forall \varphi \in \mathbb{P}_{k-1}(I_{n}; L^{2}(\Omega)),
        \end{alignat*}
        hold true.
    \end{lemma}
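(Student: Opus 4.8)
The plan is to observe that all five identities share a common structure and to reduce them to a single generic statement proved once. Writing $\bm{\psi} := \bm{\mathcal{I}}_{\tau}^{\text{GL}} \bm{\mathcal{P}}_{1} \partial_{t} \bm{u} \in \mathbb{P}_{k}(I_{n}; \bm{L}^{2}(\Omega))$ and $\bm{G}(t) := \int_{t_{n-1}}^{t} \bm{\psi}(s)\,\mathrm{d}s + \bm{\mathcal{P}}_{1} \bm{u}(t_{n-1}) \in \mathbb{P}_{k+1}(I_{n}; \bm{L}^{2}(\Omega))$, the definition of $\bm{z}^{\bm{u}}$ in \eqref{eq:dg_definition_z} reads $\bm{z}^{\bm{u}} = \bm{\mathcal{I}}_{\tau}^{\text{GL}} \bm{G}$, where by construction $\partial_{t} \bm{G} = \bm{\psi}$. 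Hence the first identity is equivalent to the claim that $\int_{I_{n}} \scalar{\partial_{t}(\bm{\mathcal{I}}_{\tau}^{\text{GL}} \bm{G} - \bm{G})}{\bm{\varphi}}\,\mathrm{d}t = 0$ for all $\bm{\varphi} \in \mathbb{P}_{k-1}(I_{n}; \bm{L}^{2}(\Omega))$. I would prove exactly this generic claim and then apply it five times.

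For the generic claim, the key step is integration by parts in time,
\[
    \int_{I_{n}} \scalar{\partial_{t}(\bm{\mathcal{I}}_{\tau}^{\text{GL}} \bm{G} - \bm{G})}{\bm{\varphi}}\,\mathrm{d}t = \Bigl[\scalar{\bm{\mathcal{I}}_{\tau}^{\text{GL}} \bm{G} - \bm{G}}{\bm{\varphi}}\Bigr]_{t_{n-1}}^{t_{n}} - \int_{I_{n}} \scalar{\bm{\mathcal{I}}_{\tau}^{\text{GL}} \bm{G} - \bm{G}}{\partial_{t} \bm{\varphi}}\,\mathrm{d}t.
\]
Since the Gauss--Lobatto nodes on $\overline{I}_{n}$ include both endpoints, $t_{n-1} = t_{n,0}^{\text{GL}}$ and $t_{n} = t_{n,k}^{\text{GL}}$, the interpolation error $\bm{\mathcal{I}}_{\tau}^{\text{GL}} \bm{G} - \bm{G}$ vanishes there and the boundary contribution drops out. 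For the remaining integral I would count degrees: $\bm{\mathcal{I}}_{\tau}^{\text{GL}} \bm{G} - \bm{G} \in \mathbb{P}_{k+1}$ and $\partial_{t} \bm{\varphi} \in \mathbb{P}_{k-2}$, so the scalar integrand $\scalar{\bm{\mathcal{I}}_{\tau}^{\text{GL}} \bm{G} - \bm{G}}{\partial_{t} \bm{\varphi}}$ is a polynomial in $t$ of degree at most $2k-1$. The $(k+1)$-point Gauss--Lobatto rule \eqref{eq:quadrature_gl} is exact on $\mathbb{P}_{2k-1}$, hence this integral equals $\tau \sum_{i=0}^{k} \hat{\omega}_{i}^{\text{GL}} \scalar{(\bm{\mathcal{I}}_{\tau}^{\text{GL}}\bm{G} - \bm{G})(t_{n,i}^{\text{GL}})}{\partial_{t}\bm{\varphi}(t_{n,i}^{\text{GL}})}$, and every summand vanishes because $\bm{\mathcal{I}}_{\tau}^{\text{GL}}\bm{G}$ interpolates $\bm{G}$ at each node $t_{n,i}^{\text{GL}}$. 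This settles the generic claim; for $k=1$ the second integral vanishes trivially since then $\partial_{t}\bm{\varphi} = \bm{0}$.

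The third, fourth and fifth identities follow verbatim, replacing $(\bm{u}, \bm{\mathcal{P}}_{1})$ by $(\bm{v}, \bm{\mathcal{P}}_{1})$, $(\bm{w}, \bm{\mathcal{P}}_{2})$ and $(p, \mathcal{P}_{3})$, respectively, and reading $\scalar{\cdot}{\cdot}$ as the scalar $L^{2}$ product in the last case. For the second identity I would exploit that the spatial operator $\divv$ commutes with both $\partial_{t}$ and the temporal interpolation $\bm{\mathcal{I}}_{\tau}^{\text{GL}}$, so that $\divv \bm{z}^{\bm{u}} = \bm{\mathcal{I}}_{\tau}^{\text{GL}} \divv \bm{G}$, while $\divv \bm{G}(t) = \int_{t_{n-1}}^{t} \divv\bm{\psi}(s)\,\mathrm{d}s + \divv\bm{\mathcal{P}}_{1}\bm{u}(t_{n-1})$ again has the antiderivative structure with $\divv\bm{\psi} \in \mathbb{P}_{k}(I_{n}; L^{2}(\Omega))$. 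The generic claim then applies directly with $\divv \bm{G}$ in place of $\bm{G}$.

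The step I expect to be most delicate is the degree bookkeeping that makes Gauss--Lobatto exactness applicable: one must simultaneously use that $\bm{G}$ has degree exactly $k+1$, so that its product with $\partial_{t}\bm{\varphi}$ stays within $\mathbb{P}_{2k-1}$, and that the interpolation is nodal at the very points underlying the quadrature. Were the degrees off by one, exactness would fail and the clean nodewise cancellation would be lost. This tight matching between the interpolation nodes and the quadrature nodes, together with the endpoint-inclusion property of Gauss--Lobatto, is the heart of the argument; everything else is bookkeeping and the observation that $\divv$ passes through the purely temporal operators.
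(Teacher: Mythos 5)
Your proof is correct and is essentially the argument the paper invokes by citation: the paper's entire proof of this lemma is the remark that it is analogous to Karakashian--Makridakis, Lem.~3.1, and that lemma is established by exactly your mechanism --- integration by parts in time, vanishing of the Gauss--Lobatto interpolation error at the interval endpoints, and exactness of the $(k+1)$-point Gauss--Lobatto rule on $\mathbb{P}_{2k-1}$ applied to the degree-$(2k-1)$ integrand $\scalar{\bm{\mathcal{I}}_{\tau}^{\text{GL}}\bm{G}-\bm{G}}{\partial_{t}\bm{\varphi}}$. Your degree bookkeeping and the reduction of all five identities (including the divergence identity, via commutation of $\divv$ with the purely temporal operators) to a single generic claim are sound.
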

    
    \begin{proof}
        The proof is analogous to \cite[Lem.~3.1]{Karakashian2004Conv}.
    \end{proof}
    
    To further progress with the preparatory steps of our analysis, we introduce suitable norms for the error estimation.
    Collecting all norms participating in the upper bound, we define the norms
    \begin{align*}
        \norm{\bm{u}}_{\bm{u}, n}^{2}
        & := \norm{\partial_{t}^{k+3} \bm{u}}_{L^{2}(I_{n}; \bm{H}^{\ell+1})}^{2}
        + \norm{\partial_{t}^{k+2} \bm{u}}_{L^{2}(I_{n}; \bm{H}^{\ell+2})}^{2}
        + \norm{\partial_{t}^{k+1} \bm{u}}_{L^{2}(I_{n}; \bm{H}^{\ell+2})}^{2}
        + \norm{\partial_{t}^{2} \bm{u}}_{L^{2}(I_{n}; \bm{H}^{\ell+1})}^{2} \\
        & \quad + \norm{\partial_{t} \bm{u}}_{L^{2}(I_{n}; \bm{H}^{\ell+2})}^{2}
        + \norm{\bm{u}}_{L^{2}(I_{n}; \bm{H}^{\ell+2})}^{2}
        + \norm{\partial_{t}^{k+3} \bm{u}}_{L^{\infty}(I_{n}; \bm{H}^{\ell+1})}^{2}
        + \norm{\partial_{t}^{k+2} \bm{u}}_{L^{\infty}(I_{n}; \bm{H}^{\ell+2})}^{2} \\
        & \quad + \norm{\partial_{t}^{k+1} \bm{u}}_{L^{\infty}(I_{n}; \bm{H}^{\ell+2})}^{2}
        + \norm{\partial_{t} \bm{u}}_{L^{\infty}(I_{n}; \bm{H}^{\ell+1})}^{2}
        + \norm{\bm{u}}_{L^{\infty}(I_{n}; \bm{H}^{\ell+2})}^{2}, \\
        \norm{\bm{w}}_{\bm{w}, n}^{2}
        & := \norm{\partial_{t}^{k+2} \bm{w}}_{L^{2}(I_{n}; \bm{H}^{\ell+2})}^{2}
        + \norm{\partial_{t}^{k+1} \bm{w}}_{L^{2}(I_{n}; \bm{H}^{\ell+1})}^{2}
        + \norm{\partial_{t} \bm{w}}_{L^{2}(I_{n}; \bm{H}^{\ell+1})}^{2}
        + \norm{\bm{w}}_{L^{2}(I_{n}; \bm{H}^{\ell+1})}^{2} \\
        & \quad + \norm{\partial_{t}^{k+2} \bm{w}}_{L^{\infty}(I_{n}; \bm{H}^{\ell+1})}^{2}
        + \norm{\partial_{t}^{k+1} \bm{w}}_{L^{\infty}(I_{n}; \bm{H}^{\ell+1})}^{2}
        + \norm{\bm{w}}_{L^{\infty}(I_{n}; \bm{H}^{\ell+1})}^{2}, \\
        \norm{p}_{p, n}^{2}
        & := \norm{\partial_{t}^{k+2} p}_{L^{2}(I_{n}; H^{\ell+1})}^{2}
        + \norm{\partial_{t}^{k+1} p}_{L^{2}(I_{n}; H^{\ell+1})}^{2}
        + \norm{p}_{L^{2}(I_{n}; H^{\ell+1})}^{2} \\
        & \quad + \norm{\partial_{t}^{k+2} p}_{L^{\infty}(I_{n}; H^{\ell+1})}^{2}
        + \norm{\partial_{t}^{k+1} p}_{L^{\infty}(I_{n}; H^{\ell+1})}^{2}
        + \norm{p}_{L^{\infty}(I_{n}; H^{\ell+1})}^{2},
    \end{align*}
    and, finally, the corresponding combined norms
    \begin{align}
        \label{eq:dg_definition_norm_n}
        \norm{(\bm{u}, \bm{w}, p)}_{n}^{2} & :=
        \norm{\bm{u}}_{\bm{u}, n}^{2}
        + \norm{\bm{w}}_{\bm{w}, n}^{2}
        + \norm{p}_{p, n}^{2}, &
        \norm{(\bm{u}, \bm{w}, p)}_{I}^{2} & := \sum_{n=1}^{N} \norm{(\bm{u}, \bm{w}, p)}_{n}^{2}.
    \end{align}
    
    With these definitions at hand, the following estimates can be proven:
    
    \begin{lemma}
        \label{lem:dg_estimates_eta}
        There holds
        \begin{align}
            \norm{\bm{\eta}_{\bm{u}}}_{L^{2}(I_{n}; \bm{U}_{h})}
            & \leq c \left( \tau^{k+2} +h^{\ell+1}\right) \norm{(\bm{u}, \bm{w}, p)}_{n}, \label{eq:dg_estimate_eta-1} \\
            \norm{\bm{\eta}_{\bm{v}}}_{L^{2}(I_{n}; \bm{L}^{2})}
            & \leq c \left( \tau^{k+2} +h^{\ell+1} \right) \norm{(\bm{u}, \bm{w}, p)}_{n}, \label{eq:dg_estimate_eta-2} \\
            \norm{\bm{\eta}_{\bm{w}}}_{L^{2}(I_{n}; \bm{L}^{2})}
            & \leq c \left( \tau^{k+2} +h^{\ell+1} \right) \norm{(\bm{u}, \bm{w}, p)}_{n}, \label{eq:dg_estimate_eta-3} \\
            \norm{\eta_{p}}_{L^{2}(I_{n}; L^{2})}
            & \leq c \left( \tau^{k+2} +h^{\ell+1} \right) \norm{(\bm{u}, \bm{w}, p)}_{n}. \label{eq:dg_estimate_eta-4}
        \end{align}
    \end{lemma}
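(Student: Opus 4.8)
\emph{Plan.} All four bounds follow from a single mechanism, so I would carry it out in detail for $\bm{\eta}_{\bm{u}}$ and then indicate the (essentially verbatim) modifications for the other fields. The starting point is that, by linearity of $\bm{\mathcal{I}}_{\tau}^{\textnormal{GL}}$ and the fundamental theorem of calculus, the construction \eqref{eq:dg_definition_z} gives $\bm{\eta}_{\bm{u}} = \bm{\mathcal{I}}_{\tau}^{\textnormal{GL}}\bm{u} - \bm{z}^{\bm{u}} = \bm{\mathcal{I}}_{\tau}^{\textnormal{GL}}(\bm{u} - \Phi)$, where $\Phi(t) := \int_{t_{n-1}}^{t}\bm{\mathcal{I}}_{\tau}^{\textnormal{GL}}\bm{\mathcal{P}}_{1}\partial_{t}\bm{u}(s)\,\mathrm{d}s + \bm{\mathcal{P}}_{1}\bm{u}(t_{n-1})$. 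Writing $\psi := \bm{u}-\Phi$ and using $\bm{u}(t)=\int_{t_{n-1}}^{t}\partial_{t}\bm{u}\,\mathrm{d}s+\bm{u}(t_{n-1})$, I obtain $\psi(t)=\int_{t_{n-1}}^{t} g(s)\,\mathrm{d}s + (\mathrm{I}-\bm{\mathcal{P}}_{1})\bm{u}(t_{n-1})$ with $g := \partial_{t}\bm{u} - \bm{\mathcal{I}}_{\tau}^{\textnormal{GL}}\bm{\mathcal{P}}_{1}\partial_{t}\bm{u}$. The entire purpose of the definition \eqref{eq:dg_definition_z} is that $g$ is the sum of a pure time-interpolation defect and a pure spatial-projection defect, each already controlled by \eqref{eq:interpolation_error_GL-1} and Lemma~\ref{lem:dg_projection_errors}.

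Rather than differentiating $\psi$, which would force derivative-interpolation estimates, I would exploit the $L^{\infty}$-stability of Lagrange interpolation on $I_{n}$. Writing $\bm{\mathcal{I}}_{\tau}^{\textnormal{GL}}\psi = \sum_{i=0}^{k}\psi(t_{n,i}^{\textnormal{GL}})\,L_{n,i}^{\textnormal{GL}}$ and using $\norm{L_{n,i}^{\textnormal{GL}}}_{L^{2}(I_{n})} \leq c\,\tau^{1/2}$ (the reference Lebesgue constant depends only on $k$), I get $\norm{\bm{\eta}_{\bm{u}}}_{L^{2}(I_{n};\bm{U}_{h})} \leq c\,\tau^{1/2}\max_{i}\norm{\psi(t_{n,i}^{\textnormal{GL}})}_{\bm{U}_{h}}$. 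Each nodal value is estimated directly from its integral representation by Cauchy--Schwarz in time, namely $\norm{\psi(t_{n,i}^{\textnormal{GL}})}_{\bm{U}_{h}} \leq \tau^{1/2}\norm{g}_{L^{2}(I_{n};\bm{U}_{h})} + \norm{(\mathrm{I}-\bm{\mathcal{P}}_{1})\bm{u}(t_{n-1})}_{\bm{U}_{h}}$. This is the step that supplies the crucial extra factor $\tau$ (two half-powers: one from $\norm{L_{n,i}^{\textnormal{GL}}}_{L^{2}(I_{n})}$, one from the nodal integral), upgrading the $\tau^{k+1}$ produced by \eqref{eq:interpolation_error_GL-1} to the advertised $\tau^{k+2}$.

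It then remains to bound the two pieces. For $\norm{g}_{L^{2}(I_{n};\bm{U}_{h})}$ I would split $g = (\mathrm{I}-\bm{\mathcal{I}}_{\tau}^{\textnormal{GL}})\bm{\mathcal{P}}_{1}\partial_{t}\bm{u} + (\bm{\mathcal{P}}_{1}-\mathrm{I})\partial_{t}\bm{u}$. The first summand is controlled by \eqref{eq:interpolation_error_GL-1} with $B=\bm{U}_{h}$ together with the projection stability $\norm{\bm{\mathcal{P}}_{1}\bm{y}}_{\bm{U}_{h}} \leq c\norm{\bm{y}}_{\bm{H}^{\ell+2}}$ (a consequence of Lemma~\ref{lem:dg_projection_errors} and the triangle inequality, costing one Sobolev order), yielding $c\,\tau^{k+1}\norm{\partial_{t}^{k+2}\bm{u}}_{L^{2}(I_{n};\bm{H}^{\ell+2})}$; the second is bounded directly by Lemma~\ref{lem:dg_projection_errors}, giving $c\,h^{\ell+1}\norm{\partial_{t}\bm{u}}_{L^{2}(I_{n};\bm{H}^{\ell+2})}$. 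The boundary term $(\mathrm{I}-\bm{\mathcal{P}}_{1})\bm{u}(t_{n-1})$ is a spatial-projection error evaluated at a single instant, hence bounded via Lemma~\ref{lem:dg_projection_errors} and the pointwise-in-time estimate $\norm{\bm{u}(t_{n-1})}_{\bm{H}^{\ell+2}} \leq \norm{\bm{u}}_{L^{\infty}(I_{n};\bm{H}^{\ell+2})}$. Assembling these, and using $\tau \leq T$ to absorb the harmless $\tau^{1/2}$ in front of the $h^{\ell+1}$-terms, produces exactly \eqref{eq:dg_estimate_eta-1}.

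Finally, \eqref{eq:dg_estimate_eta-2}--\eqref{eq:dg_estimate_eta-4} follow by the identical argument with the $\bm{U}_{h}$-norm replaced by the $\bm{L}^{2}$-norm and the projection/field pair adjusted ($\bm{\mathcal{P}}_{1},\bm{v}$ for $\bm{\eta}_{\bm{v}}$; $\bm{\mathcal{P}}_{2},\bm{w}$ for $\bm{\eta}_{\bm{w}}$; $\mathcal{P}_{3},p$ for $\eta_{p}$), the $\bm{L}^{2}$-projection errors now costing only the single order $\bm{H}^{\ell+1}$; since $\bm{v}=\partial_{t}\bm{u}$, the resulting $\bm{v}$-derivatives $\partial_{t}^{k+2}\bm{v}=\partial_{t}^{k+3}\bm{u}$, $\partial_{t}\bm{v}=\partial_{t}^{2}\bm{u}$, $\bm{v}=\partial_{t}\bm{u}$ are precisely the higher-order $\bm{u}$-terms built into $\norm{\cdot}_{\bm{u},n}$. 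I expect the main difficulty to be purely bookkeeping: matching every summand generated by the split of $g$ and of the boundary value to a term of $\norm{(\bm{u},\bm{w},p)}_{n}$, and recognizing that the pointwise-in-time evaluation at $t_{n-1}$ is exactly what forces the $L^{\infty}(I_{n};\cdot)$ contributions into the definitions of $\norm{\cdot}_{\bm{u},n}$, $\norm{\cdot}_{\bm{w},n}$, $\norm{\cdot}_{p,n}$. Once the $L^{\infty}$-stability reduction and the gain of one power of $\tau$ from the time integral are in place, the remaining analytic content is routine.
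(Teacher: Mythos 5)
Your proposal is correct, and it rests on the same two ingredients as the paper's proof -- the interpolation estimates \eqref{eq:interpolation_error_GL-1}--\eqref{eq:interpolation_error_GL-2} and Lemma~\ref{lem:dg_projection_errors}, with the decisive extra power of $\tau$ coming from the $\int_{t_{n-1}}^{t}$ built into \eqref{eq:dg_definition_z} -- but the technical route differs in two respects. First, the decomposition: the paper splits $\bm{\eta}_{\bm{w}} = (\bm{\mathcal{I}}_{\tau}^{\text{GL}} \bm{w} - \bm{\mathcal{I}}_{\tau}^{\text{GL}} \bm{\mathcal{P}}_{2} \bm{w}) + (\bm{\mathcal{I}}_{\tau}^{\text{GL}} \bm{\mathcal{P}}_{2} \bm{w} - \bm{z}^{\bm{w}})$, so the spatial defect of the field itself is peeled off first as an $L^{2}$-in-time quantity and only the purely temporal defect of the projected field enters the integral representation; you instead keep a single representation $\bm{\eta} = \bm{\mathcal{I}}_{\tau}^{\text{GL}}\bigl(\int_{t_{n-1}}^{t} g + (\mathrm{I}-\bm{\mathcal{P}})\bm{u}(t_{n-1})\bigr)$, which routes the spatial defect of $\partial_{t}\bm{u}$ through the integral (harmlessly gaining a $\tau$) and the spatial defect of the field through a pointwise endpoint evaluation, forcing you onto the $L^{\infty}(I_{n};\cdot)$ contributions of $\norm{\cdot}_{n}$ -- which, as you correctly observe, are present in the definition precisely for such evaluations. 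Second, the stability mechanism: the paper invokes the two inequalities $\norm{\mathcal{I}_{\tau}^{\text{GL}} f}_{L^{2}} \leq c\norm{f}_{L^{2}} + c\tau\norm{\partial_{t} f}_{L^{2}}$ and $\norm{\int_{t_{n-1}}^{t} f\,\mathrm{d}s}_{L^{2}} \leq c\tau\norm{f}_{L^{2}}$ from \cite[Eqs.~(3.15)--(3.16)]{Karakashian2004Conv}, whereas you use the Lebesgue-constant bound $\norm{L_{n,i}^{\text{GL}}}_{L^{2}(I_{n})} \leq c\tau^{1/2}$ together with nodal evaluation and Cauchy--Schwarz in time; the two half-powers you collect reproduce exactly the single factor $\tau$ the paper gets, and your variant is slightly more self-contained (the scaling identity behind it is the same one the paper proves in \eqref{eq:prf_dg_auxiliary_III-3}). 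Both versions land every generated term inside $\norm{(\bm{u},\bm{w},p)}_{n}$, so the bookkeeping you flag as the main difficulty does close.
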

    
    \begin{proof}
        Exploiting Lemma~\ref{lem:dg_projection_errors} and \eqref{eq:interpolation_error_GL-2} gives
        \begin{align}
            \label{eq:prf_dg_estimates_eta-1}
            \norm{\bm{\mathcal{I}}_{\tau}^{\text{GL}} \bm{w} - \bm{\mathcal{I}}_{\tau}^{\text{GL}} \bm{\mathcal{P}}_{2} \bm{w}}_{L^{2}(I_{n}; \bm{L}^{2})}
            & \leq c h^{\ell+1} \left( \tau^{k+1}  \norm{\partial_{t}^{k+1} \bm{w}}_{L^{2}(I_{n}; \bm{H}^{\ell+1})}
            + \norm{\bm{w}}_{L^{2}(I_{n}; \bm{H}^{\ell+1})} \right).
        \end{align}
        We recall that 
        \begin{align*}
            \norm{\mathcal{I}_{\tau}^{\text{GL}} f}_{L^{2}(I_{n}; L^{2})} & \leq c \norm{f}_{L^{2}(I_{n}; L^{2})} + c \tau \norm{\partial_{t} f}_{L^{2}(I_{n}; L^{2})}, \\
            \norm{\int_{t_{n-1}}^{t} f \mathrm{d}s}_{L^{2}(I_{n}; L^{2})} & \leq c \tau \norm{f}_{L^{2}(I_{n}; L^{2})},
        \end{align*}
        are valid, see \cite[Eqs.~(3.15)--(3.16)]{Karakashian2004Conv}.
        Using these inequalities coupled with \eqref{eq:interpolation_error_GL-1} and Lemma~\ref{lem:dg_projection_errors} leads to
        \begin{align}
            \label{eq:prf_dg_estimates_eta-2}
            \begin{aligned}
                \norm{\bm{\mathcal{I}}_{\tau}^{\text{GL}} \bm{\mathcal{P}}_{2} \bm{w} - \bm{z}^{\bm{w}}}_{L^{2}(I_{n}; \bm{L}^{2})}
                & = \norm{\bm{\mathcal{I}}_{\tau}^{\text{GL}}\left( \int_{t_{n-1}}^{t} \bm{\mathcal{I}}_{\tau}^{\text{GL}} \bm{\mathcal{P}}_{2} \partial_{t} \bm{w} - \bm{\mathcal{P}}_{2} \partial_{t} \bm{w} \mathrm{d}s \right)}_{L^{2}(I_{n}; \bm{L}^{2})} \\
                & \leq c \tau \norm{\bm{\mathcal{I}}_{\tau}^{\text{GL}} \bm{\mathcal{P}}_{2} \partial_{t} \bm{w} - \bm{\mathcal{P}}_{2} \partial_{t} \bm{w}}_{L^{2}(I_{n}; \bm{L}^{2})} \\
                & \leq c \tau^{k+2} \left( h^{\ell+1} \norm{\partial_{t}^{k+2} \bm{w}}_{L^{2}(I_{n}; \bm{H}^{\ell+1})}
                + \norm{\partial_{t}^{k+2} \bm{w}}_{L^{2}(I_{n}; \bm{L}^{2})} \right).
            \end{aligned}
        \end{align}
        Now the estimates \eqref{eq:prf_dg_estimates_eta-1}--\eqref{eq:prf_dg_estimates_eta-2} prove
        \begin{align*}
            \norm{\bm{\eta}_{\bm{w}}}_{L^{2}(I_{n}; \bm{L}^{2})}
            & \leq \norm{\bm{\mathcal{I}}_{\tau}^{\text{GL}} \bm{w} - \bm{\mathcal{I}}_{\tau}^{\text{GL}} \bm{\mathcal{P}}_{2} \bm{w}}_{L^{2}(I_{n}; \bm{L}^{2})}
            + \norm{\bm{\mathcal{I}}_{\tau}^{\text{GL}} \bm{\mathcal{P}}_{2} \bm{w} - \bm{z}^{\bm{w}}}_{L^{2}(I_{n}; \bm{L}^{2})}
            \leq c \left( \tau^{k+2} + h^{\ell+1} \right) \norm{(\bm{u}, \bm{w}, p)}_{n}.
        \end{align*}
        Similarly, we can show \eqref{eq:dg_estimate_eta-2}--\eqref{eq:dg_estimate_eta-4}.
    \end{proof}
    
    In order to simplify the notation, we define the symmetric positive definite matrix
    \begin{align*}
        \bm{M}_{\rho} :=
        \begin{pmatrix}
            \bar{\rho}  \bm{I} & \rho_{f}  \bm{I} \\
            \rho_{f}  \bm{I} & \rho_{w}  \bm{I}
        \end{pmatrix}.
    \end{align*}
    Next, we integrate \eqref{eq:dg_dyn_biot_interpolation_form} over space and time, apply integration by parts and subtract the resulting system from equation \eqref{eq:dg_dyn_biot_disc_weak_form}.
    After adding the first and third equation of the system, we obtain
    \begin{subequations}
        \label{eq:dg_dyn_biot_var_eq}
        \begin{align}
            \begin{aligned}
                & \quad \int_{I_{n}} \scalar{\bm{M}_{\rho}
                    \begin{pmatrix}
                        \partial_{t} \bm{e}_{\bm{v}} \\
                        \partial_{t} \bm{e}_{\bm{w}}
                    \end{pmatrix}
                }{
                    \begin{pmatrix}
                        \bm{\varphi}_{\tau, h}^{\bm{u}} \\
                        \bm{\varphi}_{\tau, h}^{\bm{w}}
                    \end{pmatrix}
                }
                + a_{h}(\bm{e}_{\bm{u}}, \bm{\varphi}_{\tau, h}^{\bm{u}})
                + \scalar{\bm{K}^{-1} \bm{e}_{\bm{w}}}{\bm{\varphi}_{\tau, h}^{\bm{w}}}
                - \scalar{\alpha  e_{p}}{\divv(\bm{\varphi}_{\tau, h}^{\bm{u}})}
                - \scalar{e_{p}}{\divv(\bm{\varphi}_{\tau, h}^{\bm{w}})} \mathrm{d}t \\
                & = \int_{I_{n}} \scalar{\bm{M}_{\rho}
                    \begin{pmatrix}
                        \partial_{t} \bm{\eta}_{\bm{v}} \\
                        \partial_{t} \bm{\eta}_{\bm{w}}
                    \end{pmatrix}
                }{
                    \begin{pmatrix}
                        \bm{\varphi}_{\tau, h}^{\bm{u}} \\
                        \bm{\varphi}_{\tau, h}^{\bm{w}}
                    \end{pmatrix}
                }
                + a_{h}(\bm{\eta}_{\bm{u}}, \bm{\varphi}_{\tau, h}^{\bm{u}})
                + \scalar{\bm{K}^{-1} \bm{\eta}_{\bm{w}}}{\bm{\varphi}_{\tau, h}^{\bm{w}}}
                - \scalar{\alpha  \eta_{p}}{\divv(\bm{\varphi}_{\tau, h}^{\bm{u}})}
                - \scalar{\eta_{p}}{\divv(\bm{\varphi}_{\tau, h}^{\bm{w}})} \mathrm{d}t \\
                & \quad - \int_{I_{n}} \scalar{\bm{M}_{\rho}
                    \begin{pmatrix}
                        \bm{r}_{\bm{v}} \\
                        \bm{r}_{\bm{w}}
                    \end{pmatrix}
                }{
                    \begin{pmatrix}
                        \bm{\varphi}_{\tau, h}^{\bm{u}} \\
                        \bm{\varphi}_{\tau, h}^{\bm{w}}
                    \end{pmatrix}
                }
                + \int_{I_{n}} \scalar{
                    \begin{pmatrix}
                        \bm{\mathcal{I}}_{\tau}^{*} \bm{f} \\
                        \bm{\mathcal{I}}_{\tau}^{*} \bm{g}
                    \end{pmatrix}
                    -
                    \begin{pmatrix}
                        \bm{\mathcal{I}}_{\tau}^{\text{GL}} \bm{f} \\
                        \bm{\mathcal{I}}_{\tau}^{\text{GL}} \bm{g}
                    \end{pmatrix}
                }{
                    \begin{pmatrix}
                        \bm{\varphi}_{\tau, h}^{\bm{u}} \\
                        \bm{\varphi}_{\tau, h}^{\bm{w}}
                    \end{pmatrix}
                }
                \mathrm{d}t
            \end{aligned}
            \label{eq:dg_dyn_biot_var_eq-1}
        \end{align}
        where we have used the consistency of $ a_{h}(\cdot, \cdot) $ and our splitting~\eqref{eq:dg_splitting}.
        The second equation becomes
        \begin{align}
            \int_{I_{n}} \scalar{\partial_{t} \bm{e}_{\bm{u}} - \bm{e}_{\bm{v}}}{\bm{\varphi}_{\tau, h}^{\bm{v}}} \mathrm{d}t
            & = \int_{I_{n}} \scalar{\partial_{t} \bm{\eta}_{\bm{u}} - \bm{\eta}_{\bm{v}} - \bm{r}_{\bm{u}}}{\bm{\varphi}_{\tau, h}^{\bm{v}}} \mathrm{d}t
            \label{eq:dg_dyn_biot_var_eq-2}
        \end{align}
        and finally, we get
        \begin{align}
            \begin{aligned}
                & \quad \int_{I_{n}} \scalar{s_{0}  \partial_{t} e_{p} + \alpha \divv(\partial_{t} \bm{e}_{\bm{u}}) + \divv(\bm{e}_{\bm{w}})}{\varphi_{\tau, h}^{p}} \mathrm{d}t \\
                & = \int_{I_{n}} \scalar{s_{0}  \partial_{t} \eta_{p} + \alpha \divv(\partial_{t} \bm{\eta}_{\bm{u}}) + \divv(\bm{\eta}_{\bm{w}})}{\varphi_{\tau, h}^{p}} \mathrm{d}t
                - \int_{I_{n}} \scalar{s_{0}  r_{p} + \alpha \divv(\bm{r}_{\bm{u}})}{\varphi_{\tau, h}^{p}} \mathrm{d}t \\
                & = \int_{I_{n}} \scalar{\alpha \divv(\partial_{t} \bm{\eta}_{\bm{u}}) + \divv(\bm{\eta}_{\bm{w}})}{\varphi_{\tau, h}^{p}} \mathrm{d}t
                - \int_{I_{n}} \scalar{\alpha \divv(\bm{r}_{\bm{u}})}{\varphi_{\tau, h}^{p}} \mathrm{d}t,
                \label{eq:dg_dyn_biot_var_eq-3}
            \end{aligned}
        \end{align}
    \end{subequations}
    where in the last step we have used
    \begin{align}
        \label{eq:dg_vanishing_term}
        \begin{aligned}
            \int_{I_{n}} s_{0} \scalar{\partial_{t} \eta_{p} - r_{p}}{\varphi_{\tau, h}^{p}} \mathrm{d}t
            & = \int_{I_{n}} s_{0} \scalar{\mathcal{I}_{\tau}^{\text{GL}} \partial_{t} p - \mathcal{P}_{3} \mathcal{I}_{\tau}^{\text{GL}} \partial_{t} p}{\varphi_{\tau, h}^{p}} \mathrm{d}t
            = 0
            \qquad \forall \varphi_{\tau, h}^{p} \in \mathbb{P}_{k-1}(I_{n}; P_{h}).
        \end{aligned}
    \end{align}
    
    In the next two lemmas, we provide estimates for the right-hand sides of \eqref{eq:dg_dyn_biot_var_eq}:
    
    \begin{lemma}[Estimates for \eqref{eq:dg_dyn_biot_var_eq-1}]
        \label{lem:dg_aux_estimates_rhs_I}
        The following estimates
        \begin{subequations}
            \label{eq:dg_aux_estimates_rhs_I}
            \begin{align}
                \int_{I_{n}} \scalar{\bm{M}_{\rho}
                    \begin{pmatrix}
                        \partial_{t} \bm{\eta}_{\bm{v}} - \bm{r}_{\bm{v}} \\
                        \partial_{t} \bm{\eta}_{\bm{w}} - \bm{r}_{\bm{w}}
                    \end{pmatrix}
                }{
                    \begin{pmatrix}
                        \bm{\varphi}_{\tau, h}^{\bm{u}} \\
                        \bm{\varphi}_{\tau, h}^{\bm{w}}
                    \end{pmatrix}
                } \mathrm{d}t
                & \leq c \left( \tau^{k+1} + h^{\ell+1} \right) \norm{(\bm{u}, \bm{w}, p)}_{n}  \norm{\bm{M}_{\rho}^{1/2}
                    \begin{pmatrix}
                        \bm{\varphi}_{\tau, h}^{\bm{u}} \\
                        \bm{\varphi}_{\tau, h}^{\bm{w}}
                    \end{pmatrix}
                }_{L^{2}(I_{n}; \bm{L}^{2})}, \label{eq:dg_aux_estimates_rhs_I-1} \\
                \int_{I_{n}} a_{h}(\bm{\eta}_{\bm{u}}, \bm{\varphi}_{\tau, h}^{\bm{u}}) \mathrm{d}t
                & \leq c \tau^{k+2}  \norm{(\bm{u}, \bm{w}, p)}_{n}  \norm{\bm{\varphi}_{\tau, h}^{\bm{u}}}_{L^{2}(I_{n}; \bm{U}_{h})}, \label{eq:dg_aux_estimates_rhs_I-2} \\
                \int_{I_{n}} \scalar{\bm{K}^{-1} \bm{\eta}_{\bm{w}}}{\bm{\varphi}_{\tau, h}^{\bm{w}}} \mathrm{d}t
                & \leq c \left( \tau^{k+2} + h^{\ell+1} \right) \norm{(\bm{u}, \bm{w}, p)}_{n}  \norm{\bm{K}^{-1/2} \bm{\varphi}_{\tau, h}^{\bm{w}}}_{L^{2}(I_{n}; \bm{L}^{2})}, \label{eq:dg_aux_estimates_rhs_I-3} \\
                \int_{I_{n}} \scalar{\alpha  \eta_{p}}{\divv(\bm{\varphi}_{\tau, h}^{\bm{u}})} \mathrm{d}t
                & \leq c \tau^{k+2}  \norm{(\bm{u}, \bm{w}, p)}_{n}  \norm{\divv(\bm{\varphi}_{\tau, h}^{\bm{u}})}_{L^{2}(I_{n}; L^{2})}, \label{eq:dg_aux_estimates_rhs_I-4} \\
                \int_{I_{n}} \scalar{\eta_{p}}{\divv(\bm{\varphi}_{\tau, h}^{\bm{w}})} \mathrm{d}t
                & \leq c \tau^{k+2}  \norm{(\bm{u}, \bm{w}, p)}_{n}  \norm{\divv(\bm{\varphi}_{\tau, h}^{\bm{w}})}_{L^{2}(I_{n}; L^{2})}, \label{eq:dg_aux_estimates_rhs_I-5} \\
                \int_{I_{n}} \scalar{
                    \begin{pmatrix}
                        \bm{\mathcal{I}}_{\tau}^{*} \bm{f} - \bm{\mathcal{I}}_{\tau}^{\textnormal{GL}} \bm{f} \\
                        \bm{\mathcal{I}}_{\tau}^{*} \bm{g} - \bm{\mathcal{I}}_{\tau}^{\textnormal{GL}} \bm{g}
                    \end{pmatrix}
                }{
                    \begin{pmatrix}
                        \bm{\varphi}_{\tau, h}^{\bm{u}} \\
                        \bm{\varphi}_{\tau, h}^{\bm{w}}
                    \end{pmatrix}
                }
                \mathrm{d}t
                & \leq c \tau^{k+1} \norm{\partial_{t}^{k+1}
                    \begin{pmatrix}
                        \bm{f} \\
                        \bm{g}
                    \end{pmatrix}
                }_{L^{2}(I_{n}; \bm{L}^{2})}
                \norm{
                    \begin{pmatrix}
                        \bm{\varphi}_{\tau, h}^{\bm{u}} \\
                        \bm{\varphi}_{\tau, h}^{\bm{w}}
                    \end{pmatrix}
                }_{L^{2}(I_{n}; \bm{L}^{2})}, \label{eq:dg_aux_estimates_rhs_I-6}
            \end{align}
        \end{subequations}
        are satisfied for all $ \bm{\varphi}_{\tau, h}^{\bm{u}} \in \mathbb{P}_{k-1}(I_{n}; \bm{U}_{h}) $ and $ \bm{\varphi}_{\tau, h}^{\bm{w}} \in \mathbb{P}_{k-1}(I_{n}; \bm{W}_{h}) $.
    \end{lemma}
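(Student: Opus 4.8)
The plan is to treat each of the six estimates by a space-time Cauchy--Schwarz argument followed by the auxiliary results already at hand, the decisive structural ingredients being the Galerkin orthogonality of the projections $\bm{\mathcal{P}}_1,\bm{\mathcal{P}}_2,\mathcal{P}_3$ together with Lemma~\ref{lem:dg_identity_z}. These remove the spatial part of the error in several terms and explain why \eqref{eq:dg_aux_estimates_rhs_I-2}, \eqref{eq:dg_aux_estimates_rhs_I-4} and \eqref{eq:dg_aux_estimates_rhs_I-5} carry only the temporal scale $\tau^{k+2}$. Throughout I would use that $\bm{\mathcal{P}}_i$ commutes with the time interpolation $\bm{\mathcal{I}}_\tau^{\mathrm{GL}}$ (since the $\bm{\mathcal{P}}_i$ act only in space while $\bm{\mathcal{I}}_\tau^{\mathrm{GL}}$ is a time-local linear combination of point evaluations) and that the test functions lie in $\mathbb{P}_{k-1}(I_n;\cdot)$, which is precisely the regime in which Lemma~\ref{lem:dg_identity_z} applies.

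For \eqref{eq:dg_aux_estimates_rhs_I-1} I would first record the cancellation
\[
\partial_t\bm{\eta}_{\bm{v}}-\bm{r}_{\bm{v}}=\bm{\mathcal{I}}_\tau^{\mathrm{GL}}\partial_t\bm{v}-\partial_t\bm{z}^{\bm{v}},
\qquad
\partial_t\bm{\eta}_{\bm{w}}-\bm{r}_{\bm{w}}=\bm{\mathcal{I}}_\tau^{\mathrm{GL}}\partial_t\bm{w}-\partial_t\bm{z}^{\bm{w}},
\]
which follows immediately from the definition \eqref{eq:dg_definition_r_y} of the residual and the splitting \eqref{eq:dg_splitting}. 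Expanding the $\bm{M}_\rho$-pairing into its four scalar pairings against the $\mathbb{P}_{k-1}$-components $\bm{\varphi}_{\tau,h}^{\bm{u}},\bm{\varphi}_{\tau,h}^{\bm{w}}$, Lemma~\ref{lem:dg_identity_z} lets me replace $\partial_t\bm{z}^{\bm{v}}$ by $\bm{\mathcal{I}}_\tau^{\mathrm{GL}}\bm{\mathcal{P}}_1\partial_t\bm{v}$ and $\partial_t\bm{z}^{\bm{w}}$ by $\bm{\mathcal{I}}_\tau^{\mathrm{GL}}\bm{\mathcal{P}}_2\partial_t\bm{w}$ under the integral, reducing the two integrands to $\bm{\mathcal{I}}_\tau^{\mathrm{GL}}(\bm{I}-\bm{\mathcal{P}}_1)\partial_t\bm{v}$ and $\bm{\mathcal{I}}_\tau^{\mathrm{GL}}(\bm{I}-\bm{\mathcal{P}}_2)\partial_t\bm{w}$. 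Applying the stability estimate \eqref{eq:interpolation_error_GL-2} and commuting $\partial_t^{k+1}$ past the (time-independent) projection, the leading temporal-interpolation contribution is controlled by the projection approximation of Lemma~\ref{lem:dg_projection_errors} and yields the scale $\tau^{k+1}$, while the remaining contribution yields $h^{\ell+1}$; a final Cauchy--Schwarz in the $\bm{M}_\rho^{1/2}$-weighted inner product, using boundedness of $\bm{M}_\rho^{1/2}$, produces \eqref{eq:dg_aux_estimates_rhs_I-1}.

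The remaining estimates are more routine. For \eqref{eq:dg_aux_estimates_rhs_I-2} I would split $\bm{\eta}_{\bm{u}}=(\bm{\mathcal{I}}_\tau^{\mathrm{GL}}\bm{u}-\bm{\mathcal{I}}_\tau^{\mathrm{GL}}\bm{\mathcal{P}}_1\bm{u})+(\bm{\mathcal{I}}_\tau^{\mathrm{GL}}\bm{\mathcal{P}}_1\bm{u}-\bm{z}^{\bm{u}})$: the first summand equals $\bm{y}-\bm{\mathcal{P}}_1\bm{y}$ with $\bm{y}:=\bm{\mathcal{I}}_\tau^{\mathrm{GL}}\bm{u}$ and is therefore $a_h$-orthogonal to $\bm{U}_h$ at every $t$ by \eqref{eq:dg_projection_operators-1}, so its time integral vanishes, while the second summand is handled by boundedness of $a_h$ in $\norm{\cdot}_{\bm{U}_h}$ together with the $\tau^{k+2}$ bound on $\norm{\bm{\mathcal{I}}_\tau^{\mathrm{GL}}\bm{\mathcal{P}}_1\bm{u}-\bm{z}^{\bm{u}}}_{L^2(I_n;\bm{U}_h)}$ obtained exactly as in Lemma~\ref{lem:dg_estimates_eta}, so no spatial scale survives. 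Estimates \eqref{eq:dg_aux_estimates_rhs_I-4} and \eqref{eq:dg_aux_estimates_rhs_I-5} use the same mechanism: since $\divv(\bm{U}_h)=\divv(\bm{W}_h)=P_h$ the test divergences lie in $P_h$, so writing $\eta_p=(\mathcal{I}_\tau^{\mathrm{GL}}p-\mathcal{P}_3\mathcal{I}_\tau^{\mathrm{GL}}p)+(\mathcal{P}_3\mathcal{I}_\tau^{\mathrm{GL}}p-z^p)$ the first summand drops by the $L^2$-orthogonality \eqref{eq:dg_projection_operators-3}, leaving the $\tau^{k+2}$ temporal remainder. Estimate \eqref{eq:dg_aux_estimates_rhs_I-3} admits no exploitable orthogonality, so I would pass to the $\bm{K}^{-1/2}$-weighted inner product, bound $\norm{\bm{K}^{-1/2}\bm{\eta}_{\bm{w}}}\le c\norm{\bm{\eta}_{\bm{w}}}$, and invoke \eqref{eq:dg_estimate_eta-3}, which is exactly why both scales $\tau^{k+2}$ and $h^{\ell+1}$ appear; and \eqref{eq:dg_aux_estimates_rhs_I-6} follows from the triangle inequality $\norm{(\bm{\mathcal{I}}_\tau^{*}-\bm{\mathcal{I}}_\tau^{\mathrm{GL}})(\bm{f},\bm{g})}\le\norm{(\bm{I}-\bm{\mathcal{I}}_\tau^{*})(\bm{f},\bm{g})}+\norm{(\bm{I}-\bm{\mathcal{I}}_\tau^{\mathrm{GL}})(\bm{f},\bm{g})}$ and the interpolation error \eqref{eq:interpolation_error_GL-1} applied to each operator, followed by Cauchy--Schwarz. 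The main obstacle is the bookkeeping in \eqref{eq:dg_aux_estimates_rhs_I-1}: one must invoke the residual cancellation and Lemma~\ref{lem:dg_identity_z} \emph{before} estimating, and must separate the two scales so that the highest time derivative is paired with the projection on the whole of $\partial_t^{k+2}\bm{v}$ (giving the clean $\tau^{k+1}$) rather than being left coupled to the full $\bm{u}_{\bm{u},n}$-regularity in a non-tight way.
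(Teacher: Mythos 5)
Your proposal is correct and follows essentially the same route as the paper: the cancellation $\partial_t\bm{\eta}_{\bm{v}}-\bm{r}_{\bm{v}}=\bm{\mathcal{I}}_\tau^{\textnormal{GL}}\partial_t\bm{v}-\partial_t\bm{z}^{\bm{v}}$ combined with Lemma~\ref{lem:dg_identity_z} for \eqref{eq:dg_aux_estimates_rhs_I-1}, the $a_h$- and $L^2$-orthogonalities of $\bm{\mathcal{P}}_1$ and $\mathcal{P}_3$ (with $\divv(\bm{U}_h)=\divv(\bm{W}_h)=P_h$) to reduce \eqref{eq:dg_aux_estimates_rhs_I-2}, \eqref{eq:dg_aux_estimates_rhs_I-4}, \eqref{eq:dg_aux_estimates_rhs_I-5} to purely temporal $\tau^{k+2}$ remainders, Lemma~\ref{lem:dg_estimates_eta} for \eqref{eq:dg_aux_estimates_rhs_I-3}, and \eqref{eq:interpolation_error_GL-1} for \eqref{eq:dg_aux_estimates_rhs_I-6} are exactly the paper's ingredients. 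The only cosmetic difference is bookkeeping in \eqref{eq:dg_aux_estimates_rhs_I-2}, where you keep the projection in the temporal remainder (requiring $\bm{U}_h$-stability of $\bm{\mathcal{P}}_1$, which follows from Lemma~\ref{lem:dg_projection_errors}) while the paper strips it off entirely via \eqref{eq:dg_projection_operators-1}; both yield the stated bound.
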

    
    \begin{proof}
        Using definition \eqref{eq:dg_splitting} and \eqref{eq:dg_definition_r_y} along with Lemma~\ref{lem:dg_identity_z}, the positive definiteness of $ \bm{M}_{\rho} $ and Lemma~\ref{lem:dg_projection_errors}, we obtain
        \begin{align*}
            & \quad \int_{I_{n}} \scalar{\bm{M}_{\rho}
                \begin{pmatrix}
                    \partial_{t} \bm{\eta}_{\bm{v}} - \bm{r}_{\bm{v}} \\
                    \partial_{t} \bm{\eta}_{\bm{w}} - \bm{r}_{\bm{w}}
                \end{pmatrix}
            }{
                \begin{pmatrix}
                    \bm{\varphi}_{\tau, h}^{\bm{u}} \\
                    \bm{\varphi}_{\tau, h}^{\bm{w}}
                \end{pmatrix}
            } \mathrm{d}t
            = \int_{I_{n}} \scalar{\bm{M}_{\rho}
                \begin{pmatrix}
                    \bm{\mathcal{I}}_{\tau}^{\text{GL}} \partial_{t} \bm{v} - \partial_{t} \bm{z}^{\bm{v}} \\
                    \bm{\mathcal{I}}_{\tau}^{\text{GL}} \partial_{t} \bm{w} - \partial_{t} \bm{z}^{\bm{w}}
                \end{pmatrix}
            }{
                \begin{pmatrix}
                    \bm{\varphi}_{\tau, h}^{\bm{u}} \\
                    \bm{\varphi}_{\tau, h}^{\bm{w}}
                \end{pmatrix}
            } \mathrm{d}t \\
            & = \int_{I_{n}} \scalar{\bm{M}_{\rho}
                \begin{pmatrix}
                    \bm{\mathcal{I}}_{\tau}^{\text{GL}} \partial_{t} \bm{v} - \bm{\mathcal{I}}_{\tau}^{\text{GL}} \bm{\mathcal{P}}_{1} \partial_{t} \bm{v} \\
                    \bm{\mathcal{I}}_{\tau}^{\text{GL}} \partial_{t} \bm{w} - \bm{\mathcal{I}}_{\tau}^{\text{GL}} \bm{\mathcal{P}}_{2} \partial_{t} \bm{w}
                \end{pmatrix}
            }{
                \begin{pmatrix}
                    \bm{\varphi}_{\tau, h}^{\bm{u}} \\
                    \bm{\varphi}_{\tau, h}^{\bm{w}}
                \end{pmatrix}
            } \mathrm{d}t \\
            & \leq c h^{\ell+1} \left( \norm{\bm{\mathcal{I}}_{\tau}^{\text{GL}} \partial_{t} \bm{v}}_{L^{2}(I_{n}; \bm{H}^{\ell+1})}
            + \norm{\bm{\mathcal{I}}_{\tau}^{\text{GL}} \partial_{t} \bm{w}}_{L^{2}(I_{n}; \bm{H}^{\ell+1})} \right) \norm{\bm{M}_{\rho}^{1/2}
                \begin{pmatrix}
                    \bm{\varphi}_{\tau, h}^{\bm{u}} \\
                    \bm{\varphi}_{\tau, h}^{\bm{w}}
                \end{pmatrix}
            }_{L^{2}(I_{n}; \bm{L}^{2})} \\
            & \leq c h^{\ell+1} \left( \tau^{k+1}  \norm{\partial_{t}^{k+3} \bm{u}}_{L^{2}(I_{n}; \bm{H}^{\ell+1})}
            + \norm{\partial_{t}^{2} \bm{u}}_{L^{2}(I_{n}; \bm{H}^{\ell+1})} \right. \\
            & \hspace{4.2em} \left. + \tau^{k+1}  \norm{\partial_{t}^{k+2} \bm{w}}_{L^{2}(I_{n}; \bm{H}^{\ell+1})}
            + \norm{\partial_{t} \bm{w}}_{L^{2}(I_{n}; \bm{H}^{\ell+1})} \right)
            \norm{\bm{M}_{\rho}^{1/2}
                \begin{pmatrix}
                    \bm{\varphi}_{\tau, h}^{\bm{u}} \\
                    \bm{\varphi}_{\tau, h}^{\bm{w}}
                \end{pmatrix}
            }_{L^{2}(I_{n}; \bm{L}^{2})},
        \end{align*}
        where we have used \eqref{eq:interpolation_error_GL-2} and $ \bm{v} = \partial_{t} \bm{u} $ in the last step.
        Exploiting the definition of $ \norm{\cdot}_{n} $ from \eqref{eq:dg_definition_norm_n} yields \eqref{eq:dg_aux_estimates_rhs_I-1}.
        Similar to \cite[Eqs.~(3.15)--(3.16)]{Karakashian2004Conv}, we conclude
        \begin{subequations}
            \label{eq:prf_dg_aux_estimates_rhs_I-1}
            \begin{align}
                \norm{\bm{\mathcal{I}}_{\tau}^{\text{GL}} \bm{y}}_{L^{2}(I_{n}; \bm{U}_{h})}
                & \leq c \norm{\bm{y}}_{L^{2}(I_{n}; \bm{U}_{h})} + c \tau \norm{\partial_{t} \bm{y}}_{L^{2}(I_{n}; \bm{U}_{h})}, \\
                \norm{\int_{t_{n-1}}^{t} \bm{y} \mathrm{d}s}_{L^{2}(I_{n}; \bm{U}_{h})}
                & \leq c \tau \norm{\bm{y}}_{L^{2}(I_{n}; \bm{U}_{h})},
            \end{align}
        \end{subequations}
        Next, we combine \eqref{eq:prf_dg_aux_estimates_rhs_I-1} with \eqref{eq:interpolation_error_GL-1} in order to get
        \begin{align}
            \label{eq:prf_dg_aux_estimates_rhs_I-2}
            \begin{aligned}
                \norm{\bm{\mathcal{I}}_{\tau}^{\text{GL}}\left( \int_{t_{n-1}}^{t} \bm{\mathcal{I}}_{\tau}^{\text{GL}} \partial_{t} \bm{u} - \partial_{t} \bm{u} \mathrm{d}s \right)}_{L^{2}(I_{n}; \bm{U}_{h})}
                & \leq c \tau \norm{\bm{\mathcal{I}}_{\tau}^{\text{GL}} \partial_{t} \bm{u} - \partial_{t} \bm{u}}_{L^{2}(I_{n}; \bm{U}_{h})}
                \leq c \tau^{k+2}  \norm{\partial_{t}^{k+2} \bm{u}}_{L^{2}(I_{n}; \bm{U}_{h})}
            \end{aligned}
        \end{align}
        as well as
        \begin{align*}
            \int_{I_{n}} a_{h}(\bm{\eta}_{\bm{u}}, \bm{\varphi}_{\tau, h}^{\bm{u}}) \mathrm{d}t
            & \leq c \norm{\bm{\mathcal{I}}_{\tau}^{\text{GL}} \bm{u} - \bm{\mathcal{I}}_{\tau}^{\text{GL}}\left( \int_{t_{n-1}}^{t} \bm{\mathcal{I}}_{\tau}^{\text{GL}} \partial_{t} \bm{u} \mathrm{d}s + \bm{u}(t_{n-1}) \right)}_{L^{2}(I_{n}; \bm{U}_{h})}  \norm{\bm{\varphi}_{\tau, h}^{\bm{u}}}_{L^{2}(I_{n}; \bm{U}_{h})} \\
            & \leq c \tau^{k+2}  \norm{(\bm{u}, \bm{w}, p)}_{n}  \norm{\bm{\varphi}_{\tau, h}^{\bm{u}}}_{L^{2}(I_{n}; \bm{U}_{h})},
        \end{align*}
        where we have used \eqref{eq:dg_projection_operators-1}.
        Lemma~\ref{lem:dg_estimates_eta} shows
        \begin{align*}
            \int_{I_{n}} \scalar{\bm{K}^{-1} \bm{\eta}_{\bm{w}}}{\bm{\varphi}_{\tau, h}^{\bm{w}}} \mathrm{d}t
            & = \int_{I_{n}} \scalar{\bm{K}^{-1/2} \bm{\eta}_{\bm{w}}}{\bm{K}^{-1/2} \bm{\varphi}_{\tau, h}^{\bm{w}}} \mathrm{d}t
            \leq c \left( \tau^{k+2} + h^{\ell+1} \right) \norm{(\bm{u}, \bm{w}, p)}_{n}  \norm{\bm{K}^{-1/2} \bm{\varphi}_{\tau, h}^{\bm{w}}}_{L^{2}(I_{n}; \bm{L}^{2})}.
        \end{align*}
        Combining \eqref{eq:dg_splitting} and \eqref{eq:dg_projection_operators-3} with arguments from \eqref{eq:prf_dg_estimates_eta-2} yields
        \begin{align*}
            & \quad \int_{I_{n}} \scalar{\alpha  \eta_{p}}{\divv(\bm{\varphi}_{\tau, h}^{\bm{u}})} \mathrm{d}t
            = \int_{I_{n}} \alpha \scalar{\mathcal{I}_{\tau}^{\text{GL}} \mathcal{P}_{3} p - z^{p}}{\divv(\bm{\varphi}_{\tau, h}^{\bm{u}})} \mathrm{d}t \\
            & \leq c \norm{\mathcal{I}_{\tau}^{\text{GL}} \mathcal{P}_{3} p - z^{p}}_{L^{2}(I_{n}; L^{2})}  \norm{\divv(\bm{\varphi}_{\tau, h}^{\bm{u}})}_{L^{2}(I_{n}; L^{2})}
            \leq c \tau^{k+2}  \norm{(\bm{u}, \bm{w}, p)}_{n}  \norm{\divv(\bm{\varphi}_{\tau, h}^{\bm{u}})}_{L^{2}(I_{n}; L^{2})},
        \end{align*}
        which shows \eqref{eq:dg_aux_estimates_rhs_I-4}.
        Assertion~\eqref{eq:dg_aux_estimates_rhs_I-5} can be proven analogously.
        Estimate~\eqref{eq:dg_aux_estimates_rhs_I-6} directly follows from \eqref{eq:interpolation_error_GL-1}.
    \end{proof}
    
    Next, we estimate the right-hand sides of \eqref{eq:dg_dyn_biot_var_eq-2} and \eqref{eq:dg_dyn_biot_var_eq-3}:
    
    \begin{lemma}[Estimates for \eqref{eq:dg_dyn_biot_var_eq-2}--\eqref{eq:dg_dyn_biot_var_eq-3}]
        \label{lem:dg_aux_estimates_rhs_II}
        The following estimates
        \begin{subequations}
            \label{eq:dg_aux_estimates_rhs_II}
            \begin{align}
                \int_{I_{n}} \scalar{\partial_{t} \bm{\eta}_{\bm{u}} - \bm{\eta}_{\bm{v}} - \bm{r}_{\bm{u}}}{\bm{\varphi}_{\tau, h}^{\bm{v}}} \mathrm{d}t
                & \leq c \tau^{k+2}  \norm{(\bm{u}, \bm{w}, p)}_{n}  \norm{\bm{\varphi}_{\tau, h}^{\bm{v}}}_{L^{2}(I_{n}; \bm{L}^{2})}, \label{eq:dg_aux_estimates_rhs_II-1} \\
                \int_{I_{n}} \alpha \scalar{\divv(\partial_{t} \bm{\eta}_{\bm{u}} - \bm{r}_{\bm{u}})}{\varphi_{\tau, h}^{p}} \mathrm{d}t
                & \leq c  h^{\ell+1}  \norm{(\bm{u}, \bm{w}, p)}_{n}  \norm{\varphi_{\tau, h}^{p}}_{L^{2}(I_{n}; L^{2})}, \label{eq:dg_aux_estimates_rhs_II-2} \\
                \int_{I_{n}} \scalar{\divv(\bm{\eta}_{\bm{w}})}{\varphi_{\tau, h}^{p}} \mathrm{d}t
                & \leq c \tau^{k+2}  \norm{(\bm{u}, \bm{w}, p)}_{n}  \norm{\varphi_{\tau, h}^{p}}_{L^{2}(I_{n}; L^{2})}, \label{eq:dg_aux_estimates_rhs_II-3}
            \end{align}
        \end{subequations}
        are valid for all $ \bm{\varphi}_{\tau, h}^{\bm{v}} \in \mathbb{P}_{k-1}(I_{n}; \bm{V}_{h}) $ and $ \varphi_{\tau, h}^{p} \in \mathbb{P}_{k-1}(I_{n}; P_{h}) $.
    \end{lemma}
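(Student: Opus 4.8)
The plan is to treat each of the three estimates by the same two-stage strategy: first use the definition of the residual~\eqref{eq:dg_definition_r_y}, the splitting~\eqref{eq:dg_splitting} and the orthogonality identities of Lemma~\ref{lem:dg_identity_z} to cancel the residual contributions against the polynomial-in-time test functions, and then bound the surviving terms by the Cauchy--Schwarz inequality together with the interpolation estimates~\eqref{eq:interpolation_error_GL} and the projection errors of Lemma~\ref{lem:dg_projection_errors}. Throughout I will exploit that $\bm{\mathcal{I}}_{\tau}^{\text{GL}}$ acts only in time and hence commutes with $\divv$ and with $\partial_{t}$.

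For~\eqref{eq:dg_aux_estimates_rhs_II-1} I would first insert the definitions to obtain $\partial_{t}\bm{\eta}_{\bm{u}}-\bm{r}_{\bm{u}}=\bm{\mathcal{I}}_{\tau}^{\text{GL}}(\partial_{t}\bm{u})-\partial_{t}\bm{z}^{\bm{u}}$. Testing against $\bm{\varphi}_{\tau,h}^{\bm{v}}\in\mathbb{P}_{k-1}(I_{n};\bm{V}_{h})$, the first identity of Lemma~\ref{lem:dg_identity_z} lets me replace $\partial_{t}\bm{z}^{\bm{u}}$ by $\bm{\mathcal{I}}_{\tau}^{\text{GL}}\bm{\mathcal{P}}_{1}\partial_{t}\bm{u}$ inside the integral. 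Adding $-\bm{\eta}_{\bm{v}}=\bm{z}^{\bm{v}}-\bm{\mathcal{I}}_{\tau}^{\text{GL}}\bm{v}$ and using $\bm{v}=\partial_{t}\bm{u}$, the two copies of $\bm{\mathcal{I}}_{\tau}^{\text{GL}}\partial_{t}\bm{u}$ cancel and only $\bm{z}^{\bm{v}}-\bm{\mathcal{I}}_{\tau}^{\text{GL}}\bm{\mathcal{P}}_{1}\bm{v}$ remains. This quantity has exactly the form estimated in~\eqref{eq:prf_dg_estimates_eta-2}, so it is bounded in $L^{2}(I_{n};\bm{L}^{2})$ by $c\tau^{k+2}\norm{\partial_{t}^{k+2}\bm{v}}_{L^{2}(I_{n};\bm{L}^{2})}=c\tau^{k+2}\norm{\partial_{t}^{k+3}\bm{u}}_{L^{2}(I_{n};\bm{L}^{2})}$, which is controlled by $\norm{(\bm{u},\bm{w},p)}_{n}$; Cauchy--Schwarz then yields~\eqref{eq:dg_aux_estimates_rhs_II-1}.

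For~\eqref{eq:dg_aux_estimates_rhs_II-2} I again use $\partial_{t}\bm{\eta}_{\bm{u}}-\bm{r}_{\bm{u}}=\bm{\mathcal{I}}_{\tau}^{\text{GL}}(\partial_{t}\bm{u})-\partial_{t}\bm{z}^{\bm{u}}$, but now apply $\divv$ and test against $\varphi_{\tau,h}^{p}\in\mathbb{P}_{k-1}(I_{n};P_{h})$. The second (divergence) identity of Lemma~\ref{lem:dg_identity_z} removes $\divv(\partial_{t}\bm{z}^{\bm{u}})$ in favour of $\mathcal{I}_{\tau}^{\text{GL}}\divv(\bm{\mathcal{P}}_{1}\partial_{t}\bm{u})$, leaving $\mathcal{I}_{\tau}^{\text{GL}}\divv((\mathrm{Id}-\bm{\mathcal{P}}_{1})\partial_{t}\bm{u})$. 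The point is that all time-discretization terms cancel, so the residual rate is governed purely by the spatial divergence projection error of $\bm{\mathcal{P}}_{1}$: by Lemma~\ref{lem:dg_projection_errors} this error is of order $h^{\ell+1}$, and the stability of $\mathcal{I}_{\tau}^{\text{GL}}$ from~\eqref{eq:interpolation_error_GL-2} only multiplies the already $O(h^{\ell+1})$ quantity by constants and by a $\tau^{k+1}$ factor that is absorbed, which explains the $h^{\ell+1}$ (rather than $\tau^{k+2}+h^{\ell+1}$) rate. For~\eqref{eq:dg_aux_estimates_rhs_II-3} I would split $\bm{\eta}_{\bm{w}}=\bm{\mathcal{I}}_{\tau}^{\text{GL}}(\mathrm{Id}-\bm{\mathcal{P}}_{2})\bm{w}+(\bm{\mathcal{I}}_{\tau}^{\text{GL}}\bm{\mathcal{P}}_{2}\bm{w}-\bm{z}^{\bm{w}})$. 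Since $\divv$ commutes with $\bm{\mathcal{I}}_{\tau}^{\text{GL}}$, the divergence of the first part is a time-interpolant of the values $\divv((\mathrm{Id}-\bm{\mathcal{P}}_{2})\bm{w})(t_{n,i}^{\text{GL}})$, each of which is $L^{2}$-orthogonal to $P_{h}$ by~\eqref{eq:dg_projection_operators-2}; hence its pairing with $\varphi_{\tau,h}^{p}(t)\in P_{h}$ vanishes pointwise in $t$ and integrates to zero. The divergence of the second part equals $\mathcal{I}_{\tau}^{\text{GL}}\left(\int_{t_{n-1}}^{t}(\mathrm{Id}-\mathcal{I}_{\tau}^{\text{GL}})\divv(\bm{\mathcal{P}}_{2}\partial_{t}\bm{w})\,\mathrm{d}s\right)$, which, using that $\divv\bm{\mathcal{P}}_{2}$ is a contraction onto $P_{h}$ together with~\eqref{eq:interpolation_error_GL-1} and the $\int_{t_{n-1}}^{t}$- and $\mathcal{I}_{\tau}^{\text{GL}}$-stability bounds, is of order $\tau^{k+2}$ in $L^{2}(I_{n};L^{2})$; Cauchy--Schwarz then finishes.

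The routine part is the Cauchy--Schwarz step and the invocation of the interpolation and projection bounds. The main obstacle is the first stage of each estimate: one must carry out the algebra so that, after inserting the definitions of $\bm{r}_{\bm{u}}$, the $\bm{\eta}$-terms and the $\bm{z}$-quantities, the spurious terms cancel exactly against the appropriate identity of Lemma~\ref{lem:dg_identity_z}, leaving only a clean projection-or-interpolation remainder. In particular, obtaining the purely spatial $h^{\ell+1}$ rate in~\eqref{eq:dg_aux_estimates_rhs_II-2} hinges on using the divergence version of that lemma, and the vanishing of the leading term in~\eqref{eq:dg_aux_estimates_rhs_II-3} hinges on the commuting-diagram property~\eqref{eq:dg_projection_operators-2} of $\bm{\mathcal{P}}_{2}$ surviving the time interpolation.
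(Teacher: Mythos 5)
Your proposal is correct and follows essentially the same route as the paper: the same algebraic reduction of $\partial_{t}\bm{\eta}_{\bm{u}}-\bm{\eta}_{\bm{v}}-\bm{r}_{\bm{u}}$ to $\bm{z}^{\bm{v}}-\bm{\mathcal{I}}_{\tau}^{\textnormal{GL}}\bm{\mathcal{P}}_{1}\bm{v}$ via Lemma~\ref{lem:dg_identity_z}, the same reduction of the divergence term to the pure spatial projection error $\divv(\bm{\mathcal{I}}_{\tau}^{\textnormal{GL}}\partial_{t}\bm{u}-\bm{\mathcal{I}}_{\tau}^{\textnormal{GL}}\bm{\mathcal{P}}_{1}\partial_{t}\bm{u})$ explaining the $h^{\ell+1}$ rate, and the same use of the orthogonality \eqref{eq:dg_projection_operators-2} to kill the $\bm{\mathcal{I}}_{\tau}^{\textnormal{GL}}(\mathrm{Id}-\bm{\mathcal{P}}_{2})\bm{w}$ part before bounding $\divv(\bm{\mathcal{I}}_{\tau}^{\textnormal{GL}}\bm{\mathcal{P}}_{2}\bm{w}-\bm{z}^{\bm{w}})$ as in \eqref{eq:prf_dg_estimates_eta-2}. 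No gaps.
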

    
    \begin{proof}
        Exploiting definitions \eqref{eq:dg_splitting} and \eqref{eq:dg_definition_r_y} coupled with $ \bm{v} = \partial_{t} \bm{u} $ and Lemma~\ref{lem:dg_identity_z} results in
        \begin{align*}
            \int_{I_{n}} \scalar{\partial_{t} \bm{\eta}_{\bm{u}} - \bm{\eta}_{\bm{v}} - \bm{r}_{\bm{u}}}{\bm{\varphi}_{\tau, h}^{\bm{v}}} \mathrm{d}t
            & = \int_{I_{n}} \scalar{\bm{z}^{\bm{v}} - \bm{\mathcal{I}}_{\tau}^{\text{GL}} \bm{\mathcal{P}}_{1} \bm{v}}{\bm{\varphi}_{\tau, h}^{\bm{v}}} \mathrm{d}t
            \leq c \tau^{k+2} \norm{(\bm{u}, \bm{w}, p)}_{n}  \norm{\bm{\varphi}_{\tau, h}^{\bm{v}}}_{L^{2}(I_{n}; \bm{L}^{2})},
        \end{align*}
        where we have used ideas from \eqref{eq:prf_dg_estimates_eta-2} in the last step which shows \eqref{eq:dg_aux_estimates_rhs_II-1}.
        Using Lemma~\ref{lem:dg_identity_z}, Lemma~\ref{lem:dg_projection_errors} and \eqref{eq:interpolation_error_GL-2} yields
        \begin{align*}
            \int_{I_{n}} \alpha \scalar{\divv(\partial_{t} \bm{\eta}_{\bm{u}} - \bm{r}_{\bm{u}})}{\varphi_{\tau, h}^{p}} \mathrm{d}t
            & = \int_{I_{n}} \alpha \scalar{\divv(\bm{\mathcal{I}}_{\tau}^{\text{GL}} \partial_{t} \bm{u} - \bm{\mathcal{I}}_{\tau}^{\text{GL}} \bm{\mathcal{P}}_{1} \partial_{t} \bm{u})}{\varphi_{\tau, h}^{p}} \mathrm{d}t \\
            & \leq c h^{\ell+1}  \norm{\bm{\mathcal{I}}_{\tau}^{\text{GL}} \partial_{t} \bm{u}}_{L^{2}(I_{n}; \bm{H}^{\ell+2})}  \norm{\varphi_{\tau, h}^{p}}_{L^{2}(I_{n}; L^{2})} \\
            & \leq c h^{\ell+1} \left( \tau^{k+1}  \norm{\partial_{t}^{k+2} \bm{u}}_{L^{2}(I_{n}; \bm{H}^{\ell+2})}
            + \norm{\partial_{t} \bm{u}}_{L^{2}(I_{n}; \bm{H}^{\ell+2})} \right)  \norm{\varphi_{\tau, h}^{p}}_{L^{2}(I_{n}; L^{2})},
        \end{align*}
        from which \eqref{eq:dg_aux_estimates_rhs_II-2} follows.
        Combining \eqref{eq:dg_splitting} with \eqref{eq:dg_projection_operators-2} and following the lines of \eqref{eq:prf_dg_estimates_eta-2} leads to
        \begin{align*}
            \int_{I_{n}} \scalar{\divv(\bm{\eta}_{\bm{w}})}{\varphi_{\tau, h}^{p}} \mathrm{d}t
            & = \int_{I_{n}} \scalar{\divv(\bm{\mathcal{I}}_{\tau}^{\text{GL}} \bm{\mathcal{P}}_{2} \bm{w} - \bm{z}^{\bm{w}})}{\varphi_{\tau, h}^{p}} \mathrm{d}t \\
            & \leq \norm{\divv(\bm{\mathcal{I}}_{\tau}^{\text{GL}} \bm{\mathcal{P}}_{2} \bm{w} - \bm{z}^{\bm{w}})}_{L^{2}(I_{n}; L^{2})}  \norm{\varphi_{\tau, h}^{p}}_{L^{2}(I_{n}; L^{2})} \\
            & \leq c \tau^{k+2} \left( h^{\ell+1}  \norm{\partial_{t}^{k+2} \bm{w}}_{L^{2}(I_{n}; \bm{H}^{\ell+2})}
            + \norm{\partial_{t}^{k+2} \bm{w}}_{L^{2}(I_{n}; \bm{H}^{1})} \right) \norm{\varphi_{\tau, h}^{p}}_{L^{2}(I_{n}; L^{2})},
        \end{align*}
        which proves \eqref{eq:dg_aux_estimates_rhs_II-3}.
    \end{proof}
    
    Below, we estimate several one-sided limits arising during our error analysis.
    
    \begin{lemma}
        \label{lem:dg_estimate_limits}
        It holds
        \begin{subequations}
            \label{eq:dg_estimate_limits}
            \begin{align}
                \norm{\bm{e}_{\bm{u}}(t_{n-1}^{+})}_{\bm{U}_{h}}^{2}
                & \leq (1+\tau\varepsilon_{1}) \norm{\bm{e}_{\bm{u}}(t_{n-1})}_{\bm{U}_{h}}^{2}
                + c \tau^{2(k+1)} \norm{(\bm{u}, \bm{w}, p)}_{n}^{2}, \label{eq:dg_estimate_limits-1} \\
                \norm{\bm{e}_{\bm{v}}(t_{n-1}^{+})}_{\bm{L}^{2}(\Omega)}^{2}
                & \leq (1+\tau\varepsilon_{1}) \norm{\bm{e}_{\bm{v}}(t_{n-1})}_{\bm{L}^{2}(\Omega)}^{2}
                + c  \tau^{2(k+1)}  \norm{(\bm{u}, \bm{w}, p)}_{n}^{2}, \label{eq:dg_estimate_limits-2} \\
                \norm{\bm{e}_{\bm{w}}(t_{n-1}^{+})}_{\bm{L}^{2}(\Omega)}^{2}
                & \leq (1+\tau\varepsilon_{1}) \norm{\bm{e}_{\bm{w}}(t_{n-1})}_{\bm{L}^{2}(\Omega)}^{2}
                + c  \tau^{2(k+1)}  \norm{(\bm{u}, \bm{w}, p)}_{n}^{2}, \label{eq:dg_estimate_limits-3} \\
                \norm{e_{p}(t_{n-1}^{+})}_{L^{2}(\Omega)}^{2}
                & \leq (1+\tau\varepsilon_{1}) \norm{e_{p}(t_{n-1})}_{L^{2}(\Omega)}^{2}
                + c  \tau^{2(k+1)}  \norm{(\bm{u}, \bm{w}, p)}_{n}^{2}, \label{eq:dg_estimate_limits-4} \\
                \norm{\bm{M}_{\rho}^{1/2}
                    \begin{pmatrix}
                        \bm{e}_{\bm{v}}(t_{n-1}^{+}) \\
                        \bm{e}_{\bm{w}}(t_{n-1}^{+})
                    \end{pmatrix}
                }_{\bm{L}^{2}(\Omega)}^{2}
                & \leq (1 + \tau \varepsilon_{1}) \norm{\bm{M}_{\rho}^{1/2}
                    \begin{pmatrix}
                        \bm{e}_{\bm{v}}(t_{n-1}) \\
                        \bm{e}_{\bm{w}}(t_{n-1})
                    \end{pmatrix}
                }_{\bm{L}^{2}(\Omega)}^{2}
                + c  \tau^{2(k+1)}  \norm{(\bm{u}, \bm{w}, p)}_{n}^{2}, \label{eq:dg_estimate_limits-5} \\
                a_{h}(\bm{e}_{\bm{u}}(t_{n-1}^{+}), \bm{e}_{\bm{u}}(t_{n-1}^{+}))
                & \leq a_{h}(\bm{e}_{\bm{u}}(t_{n-1}), \bm{e}_{\bm{u}}(t_{n-1}))
                + \tau \varepsilon_{1} \norm{\bm{e}_{\bm{u}}(t_{n-1})}_{\bm{U}_{h}}^{2}
                + c  \tau^{2(k+1)}   \norm{(\bm{u}, \bm{w}, p)}_{n}^{2}, \label{eq:dg_estimate_limits-6}
            \end{align}
        \end{subequations}
        for $ n = 2, \ldots, N $ and $ \varepsilon_{1} > 0 $.
    \end{lemma}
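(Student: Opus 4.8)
The plan is to exploit the global continuity in time of the discrete solution $\bm{x}_{\tau,h}$. Since $\bm{u}_{\tau,h}$ (and likewise $\bm{v}_{\tau,h},\bm{w}_{\tau,h},p_{\tau,h}$) is continuous across the node $t_{n-1}$, the two one-sided limits of each error differ only through the auxiliary functions $\bm{z}^{\bm{u}},\bm{z}^{\bm{v}},\bm{z}^{\bm{w}},z^{p}$ of \eqref{eq:dg_definition_z}, which are built interval-wise and are in general discontinuous in time. Concretely, I would write $\bm{e}_{\bm{u}}(t_{n-1}^{+}) = \bm{e}_{\bm{u}}(t_{n-1}) - \bm{J}_{\bm{u}}$, where $\bm{J}_{\bm{u}} := \bm{z}^{\bm{u}}(t_{n-1}^{+}) - \bm{z}^{\bm{u}}(t_{n-1})$ is the temporal jump of $\bm{z}^{\bm{u}}$ at $t_{n-1}$, and analogously for the other three fields. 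Each of the six claimed inequalities then reduces to a quantitative bound on the relevant jump together with an application of Cauchy--Schwarz and Young's inequality.

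For the jump bound I would compute $\bm{J}_{\bm{u}}$ explicitly. Because the Gauss--Lobatto points include both endpoints and the inner integral in \eqref{eq:dg_definition_z} vanishes at the left endpoint, $\bm{z}^{\bm{u}}(t_{n-1}^{+}) = \bm{\mathcal{P}}_{1}\bm{u}(t_{n-1})$, whereas evaluation on $I_{n-1}$ gives $\bm{z}^{\bm{u}}(t_{n-1}) = \int_{t_{n-2}}^{t_{n-1}} \bm{\mathcal{I}}_{\tau}^{\text{GL}}\bm{\mathcal{P}}_{1}\partial_{t}\bm{u}\,\mathrm{d}s + \bm{\mathcal{P}}_{1}\bm{u}(t_{n-2})$. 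Since $\bm{\mathcal{P}}_{1}$ is linear and time-independent, $\bm{\mathcal{P}}_{1}\bm{u}(t_{n-1})-\bm{\mathcal{P}}_{1}\bm{u}(t_{n-2}) = \int_{t_{n-2}}^{t_{n-1}}\bm{\mathcal{P}}_{1}\partial_{t}\bm{u}\,\mathrm{d}s$, so the jump collapses to the time integral of the Gauss--Lobatto interpolation error,
\[
\bm{J}_{\bm{u}} = \int_{t_{n-2}}^{t_{n-1}} \bigl(\bm{\mathcal{P}}_{1}\partial_{t}\bm{u} - \bm{\mathcal{I}}_{\tau}^{\text{GL}}\bm{\mathcal{P}}_{1}\partial_{t}\bm{u}\bigr)\,\mathrm{d}s .
\]
Applying Cauchy--Schwarz in time followed by \eqref{eq:interpolation_error_GL-1} in the $\bm{U}_{h}$-norm, using the commutation $\partial_{t}^{k+1}\bm{\mathcal{P}}_{1}\partial_{t}\bm{u} = \bm{\mathcal{P}}_{1}\partial_{t}^{k+2}\bm{u}$ and the projection stability of Lemma~\ref{lem:dg_projection_errors}, yields $\norm{\bm{J}_{\bm{u}}}_{\bm{U}_{h}} \le c\,\tau^{k+3/2}\norm{(\bm{u},\bm{w},p)}_{n}$; the same argument with $\bm{\mathcal{P}}_{2},\mathcal{P}_{3}$ and the $\bm{L}^{2}$-projection estimates gives the analogous $O(\tau^{k+3/2})$ bounds for $\bm{J}_{\bm{v}},\bm{J}_{\bm{w}},J_{p}$ against the regularity terms collected in $\norm{\cdot}_{n}$ (the jumps draw on $I_{n-1}$-data, an index shift that is harmless after the final summation over $n$).

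With the jump bounds at hand I would expand the relevant square and treat the cross term $-2\scalar{\bm{e}_{\bm{u}}(t_{n-1})}{\bm{J}_{\bm{u}}}$ by Cauchy--Schwarz and Young's inequality at scale $\delta = \tau\varepsilon_{1}$. This produces the factor $(1+\tau\varepsilon_{1})$ in front of $\norm{\bm{e}_{\bm{u}}(t_{n-1})}_{\bm{U}_{h}}^{2}$ and a remainder $(\tau\varepsilon_{1})^{-1}\norm{\bm{J}_{\bm{u}}}_{\bm{U}_{h}}^{2} \le c\,\tau^{2(k+1)}\norm{(\bm{u},\bm{w},p)}_{n}^{2}$, exactly as claimed; note that the gain from $\tau^{2k+3}$ to $\tau^{2k+2}$ is precisely supplied by the $\tau^{-1}$ in Young's inequality. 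This settles \eqref{eq:dg_estimate_limits-1}--\eqref{eq:dg_estimate_limits-4}, and \eqref{eq:dg_estimate_limits-5} follows verbatim after replacing the plain inner product by the constant, symmetric positive definite $\bm{M}_{\rho}$-weighted one. For the energy estimate \eqref{eq:dg_estimate_limits-6} I would instead expand $a_{h}(\bm{e}_{\bm{u}}(t_{n-1}^{+}),\bm{e}_{\bm{u}}(t_{n-1}^{+}))$ by bilinearity into $a_{h}(\bm{e}_{\bm{u}}(t_{n-1}),\bm{e}_{\bm{u}}(t_{n-1})) - 2a_{h}(\bm{e}_{\bm{u}}(t_{n-1}),\bm{J}_{\bm{u}}) + a_{h}(\bm{J}_{\bm{u}},\bm{J}_{\bm{u}})$ and control the last two terms with the boundedness of $a_{h}$ in the $\bm{U}_{h}$-norm, so that $a_{h}(\bm{J}_{\bm{u}},\bm{J}_{\bm{u}}) \le c\norm{\bm{J}_{\bm{u}}}_{\bm{U}_{h}}^{2}$, combined with the same Young step.

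I expect the main obstacle to be the jump estimate for the displacement and energy cases, where $\bm{J}_{\bm{u}}$ must be measured in the full $\bm{U}_{h}$-norm, including the DG and $h^{2}|\cdot|_{H^{2}}$ contributions of \eqref{eq:dg_definition_DG_norm}. This requires the stability of $\bm{\mathcal{P}}_{1}$ in these mesh-dependent norms and its commutation with temporal differentiation, together with care that the regularity thereby produced lands among the terms assembled in $\norm{\cdot}_{n}$; the remaining fields, estimated only in $\bm{L}^{2}$, are comparatively routine.
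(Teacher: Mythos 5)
Your proposal is correct and follows essentially the same route as the paper: using continuity of the discrete solution to reduce the one-sided jump of each error to the jump of the corresponding $\bm{z}$-function, identifying that jump with $\int_{t_{n-2}}^{t_{n-1}}(\bm{\mathcal{I}}_{\tau}^{\text{GL}}\bm{\mathcal{P}}_{1}\partial_{t}\bm{u}-\bm{\mathcal{P}}_{1}\partial_{t}\bm{u})\,\mathrm{d}s$, bounding it by $c\,\tau^{1/2}\tau^{k+1}$ times the regularity norms via Cauchy--Schwarz and \eqref{eq:interpolation_error_GL-1}, and then squaring with Young's inequality at scale $\tau\varepsilon_{1}$ to produce the $(1+\tau\varepsilon_{1})$ factor and the $\tau^{2(k+1)}$ remainder. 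Your explicit remark about the $I_{n-1}$ index shift and the $\bm{U}_{h}$-stability of $\bm{\mathcal{P}}_{1}$ is, if anything, slightly more careful than the paper's own write-up.
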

    
    \begin{proof}
        Using \eqref{eq:dg_splitting} along with the triangle inequality and continuity of the discrete solution, we obtain
        \begin{align*}
            \norm{\bm{e}_{\bm{u}}(t_{n-1}^{+})}_{\bm{U}_{h}}
            & \leq \norm{\bm{u}_{\tau, h}(t_{n-1}) - \bm{z}^{\bm{u}}(t_{n-1})}_{\bm{U}_{h}}
            + \norm{\bm{z}^{\bm{u}}(t_{n-1}) - \bm{z}^{\bm{u}}(t_{n-1}^{+})}_{\bm{U}_{h}}.
        \end{align*}
        Now we apply \eqref{eq:interpolation_error_GL-1} and Lemma~\ref{lem:dg_projection_errors} to get
        \begin{align}
            \label{eq:prf_dg_estimate_limits-1}
            \begin{aligned}
                \norm{\bm{z}^{\bm{u}}(t_{n-1}) - \bm{z}^{\bm{u}}(t_{n-1}^{+})}_{\bm{U}_{h}}
                & = \norm{\int_{t_{n-2}}^{t_{n-1}} \bm{\mathcal{I}}_{\tau}^{\text{GL}} \bm{\mathcal{P}}_{1} \partial_{t} \bm{u} - \bm{\mathcal{P}}_{1} \partial_{t} \bm{u} \mathrm{d}s}_{\bm{U}_{h}} \\
                & \leq c \tau^{1/2} \tau^{k+1} \left( h^{\ell+1}  \norm{\partial_{t}^{k+2} \bm{u}}_{L^{2}(I_{n}; \bm{H}^{\ell+2})}
                + \norm{\partial_{t}^{k+2} \bm{u}}_{L^{2}(I_{n}; \bm{U}_{h})} \right)
            \end{aligned}
        \end{align}
        and consequently after squaring we obtain \eqref{eq:dg_estimate_limits-1}.
        The estimates \eqref{eq:dg_estimate_limits-2}--\eqref{eq:dg_estimate_limits-4} can be shown analogously.
        
        Using the symmetry of $ \bm{M}_{\rho} $, we get
        \begin{align*}
            \norm{\bm{M}_{\rho}^{1/2}
                \begin{pmatrix}
                    \bm{e}_{\bm{v}}(t_{n-1}^{+}) \\
                    \bm{e}_{\bm{w}}(t_{n-1}^{+})
                \end{pmatrix}
            }_{\bm{L}^{2}(\Omega)}^{2}
            & = \norm{\bm{M}_{\rho}^{1/2}
                \begin{pmatrix}
                    \bm{e}_{\bm{v}}(t_{n-1}) \\
                    \bm{e}_{\bm{w}}(t_{n-1})
                \end{pmatrix}
            }_{\bm{L}^{2}(\Omega)}^{2}
            + \norm{\bm{M}_{\rho}^{1/2}
                \begin{pmatrix}
                    \bm{e}_{\bm{v}}(t_{n-1}^{+}) - \bm{e}_{\bm{v}}(t_{n-1}) \\
                    \bm{e}_{\bm{w}}(t_{n-1}^{+}) - \bm{e}_{\bm{w}}(t_{n-1})
                \end{pmatrix}
            }_{\bm{L}^{2}(\Omega)}^{2} \\
            & \quad + 2 \scalar{\bm{M}_{\rho}
                \begin{pmatrix}
                    \bm{e}_{\bm{v}}(t_{n-1}^{+}) - \bm{e}_{\bm{v}}(t_{n-1}) \\
                    \bm{e}_{\bm{w}}(t_{n-1}^{+}) - \bm{e}_{\bm{w}}(t_{n-1})
                \end{pmatrix}
            }{
                \begin{pmatrix}
                    \bm{e}_{\bm{v}}(t_{n-1}) \\
                    \bm{e}_{\bm{w}}(t_{n-1})
                \end{pmatrix}
            } \\
            & \leq (1 + \tau \varepsilon_{1}) \norm{\bm{M}_{\rho}^{1/2}
                \begin{pmatrix}
                    \bm{e}_{\bm{v}}(t_{n-1}) \\
                    \bm{e}_{\bm{w}}(t_{n-1})
                \end{pmatrix}
            }_{\bm{L}^{2}(\Omega)}^{2} \\
            & \quad + (1 + (\tau \varepsilon_{1})^{-1}) \norm{\bm{M}_{\rho}^{1/2}
                \begin{pmatrix}
                    \bm{e}_{\bm{v}}(t_{n-1}^{+}) - \bm{e}_{\bm{v}}(t_{n-1}) \\
                    \bm{e}_{\bm{w}}(t_{n-1}^{+}) - \bm{e}_{\bm{w}}(t_{n-1})
                \end{pmatrix}
            }_{\bm{L}^{2}(\Omega)}^{2}.
        \end{align*}
        Inserting \eqref{eq:dg_splitting} together with the continuity of the discrete solution, we obtain
        \begin{align*}
            \bm{e}_{\bm{v}}(t_{n-1}^{+}) - \bm{e}_{\bm{v}}(t_{n-1})
            = \bm{z}^{\bm{v}}(t_{n-1}) - \bm{z}^{\bm{v}}(t_{n-1}^{+}).
        \end{align*}
        Then, we have with \eqref{eq:prf_dg_estimate_limits-1}
        \begin{align*}
            \norm{\bm{M}_{\rho}^{1/2}
                \begin{pmatrix}
                    \bm{e}_{\bm{v}}(t_{n-1}^{+}) - \bm{e}_{\bm{v}}(t_{n-1}) \\
                    \bm{e}_{\bm{w}}(t_{n-1}^{+}) - \bm{e}_{\bm{w}}(t_{n-1})
                \end{pmatrix}
            }_{\bm{L}^{2}(\Omega)}^{2}
            & \leq c \norm{\bm{z}^{\bm{v}}(t_{n-1}^{+}) - \bm{z}^{\bm{v}}(t_{n-1})}_{\bm{L}^{2}(\Omega)}^{2}
            + c \norm{\bm{z}^{\bm{w}}(t_{n-1}^{+}) - \bm{z}^{\bm{w}}(t_{n-1})}_{\bm{L}^{2}(\Omega)}^{2} \\
            & \leq c \tau \tau^{2(k+1)} \norm{(\bm{u}, \bm{w}, p)}_{n}^{2},
        \end{align*}
        which proves \eqref{eq:dg_estimate_limits-5}.
        Similarly, it follows \eqref{eq:dg_estimate_limits-6}.
    \end{proof}
    
    Subsequently, we use a specific choice of test functions for \eqref{eq:dg_dyn_biot_var_eq} and exploit the auxiliary results presented in Section~\ref{subsec:auxiliary_results}.
    
    \begin{lemma}
        \label{lem:dg_estimate_tf1}
        The following estimate
        \begin{align}
            \label{eq:dg_estimate_tf1}
            \begin{aligned}
                & \quad  a_{h}(\bm{e}_{\bm{u}}(t_{n}), \bm{e}_{\bm{u}}(t_{n}))
                +  \norm{\bm{M}_{\rho}^{1/2}
                    \begin{pmatrix}
                        \bm{e}_{\bm{v}}(t_{n}) \\
                        \bm{e}_{\bm{w}}(t_{n})
                    \end{pmatrix}
                }_{\bm{L}^{2}(\Omega)}^{2}
                + s_{0} \norm{e_{p}(t_{n})}_{L^{2}(\Omega)}^{2}
                + \norm{\bm{K}^{-1/2} \bm{e}_{\bm{w}}}_{L^{2}(I_{n}; \bm{L}^{2})}^{2} \\
                & \leq  a_{h}(\bm{e}_{\bm{u}}(t_{n-1}), \bm{e}_{\bm{u}}(t_{n-1}))
                +  \norm{\bm{M}_{\rho}^{1/2}
                    \begin{pmatrix}
                        \bm{e}_{\bm{v}}(t_{n-1}) \\
                        \bm{e}_{\bm{w}}(t_{n-1})
                    \end{pmatrix}
                }_{\bm{L}^{2}(\Omega)}^{2}
                + s_{0} \norm{e_{p}(t_{n-1})}_{L^{2}(\Omega)}^{2} \\
                & \quad +  \tau \varepsilon_{1} \left( \norm{\bm{e}_{\bm{u}}(t_{n-1})}_{\bm{U}_{h}}^{2}
                + \norm{\bm{M}_{\rho}^{1/2}
                    \begin{pmatrix}
                        \bm{e}_{\bm{v}}(t_{n-1}) \\
                        \bm{e}_{\bm{w}}(t_{n-1})
                    \end{pmatrix}
                }_{\bm{L}^{2}(\Omega)}^{2}
                + s_{0} \norm{e_{p}(t_{n-1})}_{L^{2}(\Omega)}^{2} \right) \\
                & \quad + \varepsilon_{2} \left( \norm{\bm{e}_{\bm{u}}}_{L^{2}(I_{n}; \bm{U}_{h})}^{2}
                + \norm{\bm{M}_{\rho}^{1/2}
                    \begin{pmatrix}
                        \bm{e}_{\bm{v}} \\
                        \bm{e}_{\bm{w}}
                \end{pmatrix}}_{L^{2}(I_{n}; \bm{L}^{2})}^{2}
                + s_{0} \norm{e_{p}}_{L^{2}(I_{n}; L^{2})}^{2}
                + \tau^{2}  c_{2} \norm{\divv(\bm{\mathcal{I}}_{\tau}^{\textnormal{G}} \bm{e}_{\bm{w}})}_{L^{2}(I_{n}; L^{2})}^{2} \right) \\
                & \quad + \varepsilon_{2} \norm{\bm{K}^{-1/2} \bm{e}_{\bm{w}}}_{L^{2}(I_{n}; \bm{L}^{2})}^{2}
                + c \left( \tau^{2(k+1)} + h^{2(\ell+1)} \right) \norm{(\bm{u}, \bm{w}, p)}_{n}^{2}
                + c \tau^{2(k+1)} \norm{\partial_{t}^{k+1} 
                    \begin{pmatrix}
                        \bm{f} \\
                        \bm{g}
                    \end{pmatrix}
                }_{L^{2}(I_{n}; \bm{L}^{2})}^{2}
            \end{aligned}
        \end{align}
        holds for $ n = 1, \ldots, N $ and $ \varepsilon_{1}, \varepsilon_{2}, c_{2}, c > 0 $.
    \end{lemma}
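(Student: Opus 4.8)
The plan is to derive a one-step energy balance by testing the error system \eqref{eq:dg_dyn_biot_var_eq} with degree-$(k-1)$ polynomials built from the errors themselves, and then to estimate every resulting term. Concretely, I would test the combined momentum/Darcy equation \eqref{eq:dg_dyn_biot_var_eq-1} with the weighted Gauss interpolants $\bm\varphi^{\bm u}_{\tau,h}=\sum_{i=1}^k\beta_i(\bm e_{\bm v})_i L^{\mathrm G}_{n,i}$ and $\bm\varphi^{\bm w}_{\tau,h}=\sum_{i=1}^k\beta_i(\bm e_{\bm w})_iL^{\mathrm G}_{n,i}$ of the velocity and flux errors, the mass balance \eqref{eq:dg_dyn_biot_var_eq-3} with $\varphi^p_{\tau,h}=\sum_{i=1}^k\beta_i(e_p)_iL^{\mathrm G}_{n,i}$, and the kinematic relation \eqref{eq:dg_dyn_biot_var_eq-2} with a test function adjusted so that the elastic form $a_h(\bm e_{\bm u},\cdot)$ is produced; here $\beta_i=(\hat t^{\mathrm G}_i)^{-1}$, $\beta_0=1$ are the weights fixed in Lemma~\ref{lem:dg_auxiliary_III}. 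Adding the tested equations, the symmetric coupling contributions $-\scalar{\alpha e_p}{\divv\bm\varphi^{\bm u}_{\tau,h}}-\scalar{e_p}{\divv\bm\varphi^{\bm w}_{\tau,h}}$ from \eqref{eq:dg_dyn_biot_var_eq-1} should cancel against $\scalar{\alpha\divv\partial_t\bm e_{\bm u}}{\varphi^p_{\tau,h}}+\scalar{\divv\bm e_{\bm w}}{\varphi^p_{\tau,h}}$ from \eqref{eq:dg_dyn_biot_var_eq-3}; the self-adjointness identity \eqref{eq:dg_auxiliary_I-2} is exactly what makes the flux--pressure part cancel, while the displacement--pressure part cancels only up to the mismatch $\divv(\partial_t\bm e_{\bm u}-\bm e_{\bm v})$, which I would reinsert through \eqref{eq:dg_dyn_biot_var_eq-2} and which is the origin of the $\tau^2c_2\norm{\divv(\bm{\mathcal I}^{\mathrm G}_\tau\bm e_{\bm w})}^2$ remainder.

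Next I would extract the left-hand side of \eqref{eq:dg_estimate_tf1} from the surviving diagonal terms. The second-order-in-time contributions $\scalar{\bm M_\rho\partial_t(\bm e_{\bm v},\bm e_{\bm w})}{(\bm\varphi^{\bm u}_{\tau,h},\bm\varphi^{\bm w}_{\tau,h})}$ and the storage term $s_0\scalar{\partial_t e_p}{\varphi^p_{\tau,h}}$ telescope, via the identities of Lemma~\ref{lem:dg_auxiliary_II} together with \eqref{eq:dg_auxiliary_III-1}, into the endpoint kinetic and storage energies at $t_n$ minus their counterparts at $t_{n-1}^+$; the Darcy dissipation $\scalar{\bm K^{-1}\bm e_{\bm w}}{\bm\varphi^{\bm w}_{\tau,h}}$ is bounded below by $c\norm{\bm K^{-1/2}\bm e_{\bm w}}_{L^2(I_n;\bm L^2)}^2$ using the coercivity \eqref{eq:dg_auxiliary_III-2} in the $\bm K^{-1}$-inner product (admissible since $\bm K$ is time-independent and symmetric positive definite); and the elastic contribution $a_h(\bm e_{\bm u},\bm\varphi^{\bm u}_{\tau,h})$, once $\bm e_{\bm v}$ is replaced by $\partial_t\bm e_{\bm u}$ through \eqref{eq:dg_dyn_biot_var_eq-2}, telescopes into $a_h(\bm e_{\bm u}(t_n),\bm e_{\bm u}(t_n))-a_h(\bm e_{\bm u}(t_{n-1}^+),\bm e_{\bm u}(t_{n-1}^+))$. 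All the $t_{n-1}^+$ endpoint quantities thus produced are then converted to the desired $t_{n-1}$ values by Lemma~\ref{lem:dg_estimate_limits}, which is precisely what generates the factors $(1+\tau\varepsilon_1)$ and the terms $\tau\varepsilon_1(\cdots)$ on the right-hand side, together with an extra $c\tau^{2(k+1)}\norm{(\bm u,\bm w,p)}_n^2$.

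For the right-hand side I would invoke Lemmas~\ref{lem:dg_aux_estimates_rhs_I} and \ref{lem:dg_aux_estimates_rhs_II}, which already bound every projection/interpolation and data residual of \eqref{eq:dg_dyn_biot_var_eq} by $c(\tau^{k+1}+h^{\ell+1})\norm{(\bm u,\bm w,p)}_n$ (or the data term $c\tau^{k+1}\norm{\partial_t^{k+1}(\bm f,\bm g)}$) times the norm of the corresponding test function. The norms of the weighted interpolants are controlled by the errors through \eqref{eq:dg_auxiliary_I-3}, \eqref{eq:dg_auxiliary_I-5} and the equivalences \eqref{eq:dg_auxiliary_III-3}--\eqref{eq:dg_auxiliary_III-4}, so a Young inequality splits each product into an absorbable piece $\varepsilon_2\norm{\cdot}_{L^2(I_n)}^2$ (the $\varepsilon_2$-terms of \eqref{eq:dg_estimate_tf1}, including the $\bm K^{-1/2}\bm e_{\bm w}$ one) and a consistency piece, which after squaring collapses to $c(\tau^{2(k+1)}+h^{2(\ell+1)})\norm{(\bm u,\bm w,p)}_n^2+c\tau^{2(k+1)}\norm{\partial_t^{k+1}(\bm f,\bm g)}^2$. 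Collecting the telescoped energies on the left and the three groups of terms (the $t_{n-1}$ energies, the $\varepsilon_1$/$\varepsilon_2$ absorbable terms, and the consistency/data terms) on the right yields \eqref{eq:dg_estimate_tf1}.

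I expect the main obstacle to be the reconciliation forced by the Petrov--Galerkin degree gap: the trial errors lie in $\mathbb P_k$ while the admissible test functions lie in $\mathbb P_{k-1}$, so no single test function simultaneously telescopes the first-order-in-time (kinetic, storage) terms into clean endpoint energies and renders the zeroth-order Darcy dissipation coercive in $L^2(I_n)$. Handling both with the one weighted interpolant dictated by Lemma~\ref{lem:dg_auxiliary_III}, tracking the non-cancelling displacement--pressure coupling through the weak kinematic identity \eqref{eq:dg_dyn_biot_var_eq-2} (the source of the $\tau^2 c_2\norm{\divv(\bm{\mathcal I}^{\mathrm G}_\tau\bm e_{\bm w})}^2$ term), and keeping the endpoint bookkeeping at $t_{n-1}^+$ versus $t_{n-1}$ consistent across all four fields is where the delicate work lies; the remaining estimates are routine applications of the quoted lemmas and Young's inequality.
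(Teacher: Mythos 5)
Your global architecture is right---test the error system \eqref{eq:dg_dyn_biot_var_eq} with error-based test functions, cancel the pressure couplings, convert the $t_{n-1}^{+}$ limits to $t_{n-1}$ via Lemma~\ref{lem:dg_estimate_limits}, and close with Lemmas~\ref{lem:dg_aux_estimates_rhs_I}--\ref{lem:dg_aux_estimates_rhs_II} and Young's inequality---but the test functions you propose are the ones belonging to Lemma~\ref{lem:dg_estimate_tf2}, and with them the central step of your argument fails. The left-hand side of \eqref{eq:dg_estimate_tf1} consists of \emph{point values} at $t_{n}$, and these can only be produced by exact telescoping of the first-order-in-time terms. That telescoping requires the \emph{unweighted} choice: for $x\in\mathbb{P}_{k}(I_{n};L^{2})$ one has $\int_{I_{n}}\scalar{\partial_{t}x}{\sum_{i=1}^{k}\beta_{i}x_{i}L_{n,i}^{\text{G}}}\mathrm{d}t=\tau\sum_{j=1}^{k}\hat{\omega}_{j}^{\text{G}}\beta_{j}\scalar{\partial_{t}x(t_{n,j}^{\text{G}})}{x(t_{n,j}^{\text{G}})}$ by exactness of the $k$-point Gauss rule for degree $2k-1$; with $\beta_{j}\equiv 1$ this equals $\tfrac{1}{2}\norm{x(t_{n})}^{2}-\tfrac{1}{2}\norm{x(t_{n-1}^{+})}^{2}$, but with your weights $\beta_{j}=(\hat{t}_{j}^{\text{G}})^{-1}\neq 1$ it is a genuinely weighted quadrature that yields only the $L^{2}(I_{n})$-in-time lower bounds \eqref{eq:dg_auxiliary_III-1}--\eqref{eq:dg_auxiliary_III-2}, i.e.\ exactly the volumetric quantities appearing on the left of \eqref{eq:dg_estimate_tf2}, and never the endpoint energies $a_{h}(\bm{e}_{\bm{u}}(t_{n}),\bm{e}_{\bm{u}}(t_{n}))$, $\norm{\bm{M}_{\rho}^{1/2}(\bm{e}_{\bm{v}}(t_{n}),\bm{e}_{\bm{w}}(t_{n}))^{\top}}^{2}$, $s_{0}\norm{e_{p}(t_{n})}^{2}$ required here. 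The paper's proof instead takes $\bm{\varphi}_{\tau,h}^{\bm{u}}=\partial_{t}\bm{e}_{\bm{u}}$, $\bm{\varphi}_{\tau,h}^{\bm{v}}=-(\bar{\rho}\,\partial_{t}\bm{e}_{\bm{v}}+\rho_{f}\,\partial_{t}\bm{e}_{\bm{w}})$, $\bm{\varphi}_{\tau,h}^{\bm{w}}=\bm{\mathcal{I}}_{\tau}^{\text{G}}\bm{e}_{\bm{w}}$, $\varphi_{\tau,h}^{p}=\mathcal{I}_{\tau}^{\text{G}}e_{p}$; the choice of $\bm{\varphi}_{\tau,h}^{\bm{v}}$ is what converts $\partial_{t}\bm{e}_{\bm{u}}$ into $\bm{e}_{\bm{v}}$ inside the kinetic term so that it telescopes, $a_{h}(\bm{e}_{\bm{u}},\partial_{t}\bm{e}_{\bm{u}})$ telescopes directly, and the Darcy term becomes $\norm{\bm{K}^{-1/2}\bm{e}_{\bm{w}}}_{L^{2}(I_{n};\bm{L}^{2})}^{2}$ with coefficient one, as stated.

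Two further points. First, your explanation of the $\tau^{2}c_{2}\norm{\divv(\bm{\mathcal{I}}_{\tau}^{\text{G}}\bm{e}_{\bm{w}})}_{L^{2}(I_{n};L^{2})}^{2}$ term is not what happens in this lemma: the displacement--pressure coupling cancels \emph{exactly} here, because $\divv(\partial_{t}\bm{e}_{\bm{u}})\in\mathbb{P}_{k-1}$ so \eqref{eq:dg_auxiliary_I-1} applies with no mismatch (see \eqref{eq:prf_dg_estimate_tf1-9}); the term actually arises from Young's inequality applied to the consistency bound \eqref{eq:dg_aux_estimates_rhs_I-5} with $\bm{\varphi}_{\tau,h}^{\bm{w}}=\bm{\mathcal{I}}_{\tau}^{\text{G}}\bm{e}_{\bm{w}}$, and it is deliberately placed under the $\varepsilon_{2}$ prefactor so that it can later be absorbed by the positive $\tfrac{1}{2}\tau^{2}c_{2}\norm{\divv(\bm{\mathcal{I}}_{\tau}^{\text{G}}\bm{e}_{\bm{w}})}^{2}$ on the left of \eqref{eq:dg_estimate_tf2} when the two lemmas are combined. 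Second, you omit the preliminary step of testing \eqref{eq:dg_dyn_biot_var_eq-2} with $\partial_{t}\bm{e}_{\bm{u}}$ to obtain $\norm{\partial_{t}\bm{e}_{\bm{u}}}_{L^{2}(I_{n};\bm{L}^{2})}\leq\norm{\bm{e}_{\bm{v}}}_{L^{2}(I_{n};\bm{L}^{2})}+c\tau^{k+2}\norm{(\bm{u},\bm{w},p)}_{n}$, which is needed to bound the norms of the test functions before Young's inequality can produce the absorbable $\varepsilon_{2}$-terms in \eqref{eq:dg_estimate_tf1}.
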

    
    \begin{proof}
        Using $ \bm{\varphi}_{\tau, h}^{\bm{v}} := \partial_{t} \bm{e}_{\bm{u}} \in \mathbb{P}_{k-1}(I_{n}; \bm{V}_{h}) $ in \eqref{eq:dg_dyn_biot_var_eq-2} and Lemma~\ref{lem:dg_aux_estimates_rhs_II} results in
        \begin{align*}
            \int_{I_{n}} \scalar{\partial_{t} \bm{e}_{\bm{u}}}{\partial_{t} \bm{e}_{\bm{u}}} \mathrm{d}t
            & = \int_{I_{n}} \scalar{\bm{e}_{\bm{v}}}{\partial_{t} \bm{e}_{\bm{u}}}
            + \scalar{\partial_{t} \bm{\eta}_{\bm{u}} - \bm{\eta}_{\bm{v}} - \bm{r}_{\bm{u}}}{\partial_{t} \bm{e}_{\bm{u}}} \mathrm{d}t \\
            & \leq (\norm{\bm{e}_{\bm{v}}}_{L^{2}(I_{n}; \bm{L}^{2})} + c \tau^{k+2}  \norm{(\bm{u}, \bm{w}, p)}_{n} ) \norm{\partial_{t} \bm{e}_{\bm{u}}}_{L^{2}(I_{n}; \bm{L}^{2})}.
        \end{align*}
        So we get
        \begin{align}
            \label{eq:prf_dg_estimate_tf1-1}
            \norm{\partial_{t} \bm{e}_{\bm{u}}}_{L^{2}(I_{n}; \bm{L}^{2})}
            & \leq \norm{\bm{e}_{\bm{v}}}_{L^{2}(I_{n}; \bm{L}^{2})}
            + c \tau^{k+2}  \norm{(\bm{u}, \bm{w}, p)}_{n}.
        \end{align}
        We choose the test functions as
        \begin{align}
            \label{eq:prf_dg_estimate_tf1-2}
            \bm{\varphi}_{\tau, h}^{\bm{u}} & := \partial_{t} \bm{e}_{\bm{u}}, &
            \bm{\varphi}_{\tau, h}^{\bm{v}} & := - (\bar{\rho}  \partial_{t} \bm{e}_{\bm{v}} + \rho_{f}  \partial_{t} \bm{e}_{\bm{w}}), &
            \bm{\varphi}_{\tau, h}^{\bm{w}} & := \bm{\mathcal{I}}_{\tau}^{\text{G}} \bm{e}_{\bm{w}}, &
            \varphi_{\tau, h}^{p} & := \mathcal{I}_{\tau}^{\text{G}} e_{p}.
        \end{align}
        Using similar arguments as in the proof of \eqref{eq:dg_auxiliary_III-5}, it can be shown that
        \begin{subequations}
            \label{eq:prf_dg_estimate_tf1-3}
            \begin{align}
                \norm{\bm{\varphi}_{\tau, h}^{\bm{u}}}_{L^{2}(I_{n}; \bm{U}_{h})}
                & \leq c \tau^{-1} \norm{\bm{e}_{\bm{u}}}_{L^{2}(I_{n}; \bm{U}_{h})}, \\
                \norm{\bm{\varphi}_{\tau, h}^{\bm{v}}}_{L^{2}(I_{n}; \bm{L}^{2})}
                & \leq c \tau^{-1} \left( \norm{\bm{e}_{\bm{v}}}_{L^{2}(I_{n}; \bm{L}^{2})}
                + \norm{\bm{e}_{\bm{w}}}_{L^{2}(I_{n}; \bm{L}^{2})} \right).
            \end{align}
        \end{subequations}
        Application of \eqref{eq:dg_auxiliary_I-3} leads to
        \begin{align}
            \label{eq:prf_dg_estimate_tf1-4}
            \norm{\bm{\varphi}_{\tau, h}^{\bm{w}}}_{L^{2}(I_{n}; \bm{L}^{2})}
            \leq c \norm{\bm{e}_{\bm{w}}}_{L^{2}(I_{n}; \bm{L}^{2})},
            \qquad \text{and} \qquad
            \norm{\varphi_{\tau, h}^{p}}_{L^{2}(I_{n}; L^{2})}
            \leq c \norm{e_{p}}_{L^{2}(I_{n}; L^{2})}.
        \end{align}
        Exploiting \eqref{eq:dg_auxiliary_I-1} along with the definition of $ \bm{M}_{\rho} $, we have
        \begin{align}
            \label{eq:prf_dg_estimate_tf1-5}
            \begin{aligned}
                & \quad \int_{I_{n}} \scalar{\bm{M}_{\rho}
                    \begin{pmatrix}
                        \partial_{t} \bm{e}_{\bm{v}} \\
                        \partial_{t} \bm{e}_{\bm{w}}
                    \end{pmatrix}
                }{
                    \begin{pmatrix}
                        \partial_{t} \bm{e}_{\bm{u}} \\
                        \bm{\mathcal{I}}_{\tau}^{\text{G}} \bm{e}_{\bm{w}}
                    \end{pmatrix}
                }
                - \scalar{\partial_{t} \bm{e}_{\bm{u}} - \bm{e}_{\bm{v}}}{\bar{\rho}  \partial_{t} \bm{e}_{\bm{v}} + \rho_{f}  \partial_{t} \bm{e}_{\bm{w}}} \mathrm{d}t \\
                & = \int_{I_{n}} \scalar{\bm{M}_{\rho}
                    \begin{pmatrix}
                        \partial_{t} \bm{e}_{\bm{v}} \\
                        \partial_{t} \bm{e}_{\bm{w}}
                    \end{pmatrix}
                }{
                    \begin{pmatrix}
                        \bm{e}_{\bm{v}} \\
                        \bm{e}_{\bm{w}}
                    \end{pmatrix}
                } \mathrm{d}t
                = \frac{1}{2} \norm{\bm{M}_{\rho}^{1/2}
                    \begin{pmatrix}
                        \bm{e}_{\bm{v}}(t_{n}) \\
                        \bm{e}_{\bm{w}}(t_{n})
                    \end{pmatrix}
                }_{\bm{L}^{2}(\Omega)}^{2}
                - \frac{1}{2} \norm{\bm{M}_{\rho}^{1/2}
                    \begin{pmatrix}
                        \bm{e}_{\bm{v}}(t_{n-1}^{+}) \\
                        \bm{e}_{\bm{w}}(t_{n-1}^{+})
                    \end{pmatrix}
                }_{\bm{L}^{2}(\Omega)}^{2},
            \end{aligned}
        \end{align}
        and
        \begin{align}
            \label{eq:prf_dg_estimate_tf1-6}
            \int_{I_{n}} \scalar{s_{0}  \partial_{t} e_{p}}{\varphi_{\tau, h}^{p}} \mathrm{d}t
            & = \int_{I_{n}} \scalar{s_{0}  \partial_{t} e_{p}}{e_{p}} \mathrm{d}t
            = \frac{s_{0}}{2} \norm{e_{p}(t_{n})}_{L^{2}(\Omega)}^{2}
            - \frac{s_{0}}{2} \norm{e_{p}(t_{n-1}^{+})}_{L^{2}(\Omega)}^{2}.
        \end{align}
        Similarly, we get with the symmetry of $ a_{h}(\cdot, \cdot) $
        \begin{align}
            \label{eq:prf_dg_estimate_tf1-7}
            \int_{I_{n}} a_{h}(\bm{e}_{\bm{u}}, \bm{\varphi}_{\tau, h}^{\bm{u}}) \mathrm{d}t
            & = \int_{I_{n}} a_{h}(\bm{e}_{\bm{u}}, \partial_{t} \bm{e}_{\bm{u}}) \mathrm{d}t
            = \frac{1}{2}  a_{h}(\bm{e}_{\bm{u}}(t_{n}), \bm{e}_{\bm{u}}(t_{n}))
            - \frac{1}{2}  a_{h}(\bm{e}_{\bm{u}}(t_{n-1}^{+}), \bm{e}_{\bm{u}}(t_{n-1}^{+})).
        \end{align}
        From \eqref{eq:dg_auxiliary_I-1}, we get
        \begin{align}
            \label{eq:prf_dg_estimate_tf1-8}
            \begin{aligned}
                \int_{I_{n}} \scalar{\bm{K}^{-1} \bm{e}_{\bm{w}}}{\bm{\varphi}_{\tau, h}^{\bm{w}}} \mathrm{d}t
                & = \norm{\bm{K}^{-1/2} \bm{e}_{\bm{w}}}_{L^{2}(I_{n}; \bm{L}^{2})}^{2}.
            \end{aligned}
        \end{align}
        Exploiting \eqref{eq:dg_auxiliary_I-1}, we conclude
        \begin{align}
            \label{eq:prf_dg_estimate_tf1-9}
            \begin{aligned}
                \int_{I_{n}} \scalar{\alpha \divv(\partial_{t} \bm{e}_{\bm{u}})}{\varphi_{\tau, h}^{p}} \mathrm{d}t
                & = \int_{I_{n}} \scalar{\alpha \divv(\partial_{t} \bm{e}_{\bm{u}})}{\mathcal{I}_{\tau}^{\text{G}} e_{p}} \mathrm{d}t
                = \int_{I_{n}} \scalar{\alpha \divv(\partial_{t} \bm{e}_{\bm{u}})}{e_{p}} \mathrm{d}t \\
                & = \int_{I_{n}} \scalar{\alpha  e_{p}}{\divv(\bm{\varphi}_{\tau, h}^{\bm{u}})} \mathrm{d}t.
            \end{aligned}
        \end{align}
        Using \eqref{eq:dg_auxiliary_I-2}, we get
        \begin{align}
            \label{eq:prf_dg_estimate_tf1-10}
            \int_{I_{n}} \scalar{\divv(\bm{e}_{\bm{w}})}{\varphi_{\tau, h}^{p}} \mathrm{d}t
            & = \int_{I_{n}} \scalar{\divv(\bm{e}_{\bm{w}})}{\mathcal{I}_{\tau}^{\text{G}} e_{p}} \mathrm{d}t
            = \int_{I_{n}} \scalar{e_{p}}{\divv(\bm{\mathcal{I}}_{\tau}^{\text{G}} \bm{e}_{\bm{w}})} \mathrm{d}t
            = \int_{I_{n}} \scalar{e_{p}}{\divv(\bm{\varphi}_{\tau, h}^{\bm{w}})} \mathrm{d}t.
        \end{align}
        From equations \eqref{eq:prf_dg_estimate_tf1-1}--\eqref{eq:prf_dg_estimate_tf1-10}, \eqref{eq:dg_vanishing_term}, Lemma~\ref{lem:dg_aux_estimates_rhs_I}--Lemma~\ref{lem:dg_estimate_limits} and Young's inequality with $ \varepsilon_{2} > 0 $, we infer \eqref{eq:dg_estimate_tf1}.
    \end{proof}
    
    With a second choice of test functions, we can prove the following estimate:
    
    \begin{lemma}
        \label{lem:dg_estimate_tf2}
        For $ n = 1, \ldots, N $ there holds
        \begin{align}
            \label{eq:dg_estimate_tf2}
            \begin{aligned}
                & \quad C_{L^{2}} \left( \norm{\bm{e}_{\bm{u}}}_{L^{2}(I_{n}; \bm{U}_{h})}^{2}
                + \norm{\bm{M}_{\rho}^{1/2}
                    \begin{pmatrix}
                        \bm{e}_{\bm{v}} \\
                        \bm{e}_{\bm{w}}
                    \end{pmatrix}
                }_{L^{2}(I_{n}; \bm{L}^{2})}^{2}
                + s_{0} \norm{e_{p}}_{L^{2}(I_{n}; L^{2})}^{2} \right)
                + \frac{1}{2} \tau^{2} c_{2} \norm{\divv(\bm{\mathcal{I}}_{\tau}^{\textnormal{G}} \bm{e}_{\bm{w}})}_{L^{2}(I_{n}; L^{2})}^{2} \\
                & \quad + \tau (C_{L^{2}} - c_{1} C_{\infty} C_{2}(1 + \varepsilon_{1} \tau)) \norm{\bm{K}^{-1/2} \bm{e}_{\bm{w}}}_{L^{2}(I_{n}; \bm{L}^{2})}^{2} \\
                & \leq C_{2} c_{1} \tau (1 + \tau \varepsilon_{1}) \left( \norm{\bm{e}_{\bm{u}}(t_{n-1})}_{\bm{U}_{h}}^{2}
                + \norm{\bm{M}_{\rho}^{1/2}
                    \begin{pmatrix}
                        \bm{e}_{\bm{v}}(t_{n-1}) \\
                        \bm{e}_{\bm{w}}(t_{n-1})
                    \end{pmatrix}
                }_{\bm{L}^{2}(\Omega)}^{2}
                + s_{0} \norm{e_{p}(t_{n-1})}_{L^{2}(\Omega)}^{2} \right) \\
                & \quad + \tau \varepsilon_{2} \left(
                \norm{\bm{e}_{\bm{u}}}_{L^{2}(I_{n}; \bm{U}_{h})}^{2}
                + \norm{\bm{M}_{\rho}^{1/2}
                    \begin{pmatrix}
                        \bm{e}_{\bm{v}} \\
                        \bm{e}_{\bm{w}}
                \end{pmatrix}}_{L^{2}(I_{n}; \bm{L}^{2})}^{2}
                + s_{0} \norm{e_{p}}_{L^{2}(I_{n}; L^{2})}^{2} \right) \\
                & \quad + \frac{1}{2} \varepsilon_{2} \tau^{2} c_{2} \norm{\divv(\bm{\mathcal{I}}_{\tau}^{\textnormal{G}} \bm{e}_{\bm{w}})}_{L^{2}(I_{n}; L^{2})}^{2}
                + \tau \varepsilon_{2} \norm{\bm{K}^{-1/2} \bm{e}_{\bm{w}}}_{L^{2}(I_{n}; \bm{L}^{2})}^{2}
                + c \tau \left( \tau^{2(k+1)} + h^{2(\ell+1)} \right) \norm{(\bm{u}, \bm{w}, p)}_{n}^{2} \\
                & \quad + \tau^{2k+3} \norm{\partial_{t}^{k+1}
                    \begin{pmatrix}
                        \bm{f} \\
                        \bm{g}
                    \end{pmatrix}
                }_{L^{2}(I_{n}; \bm{L}^{2})}^{2}
            \end{aligned}
        \end{align}
        with $ C_{L^{2}} := \frac{1}{2} c_{1} C_{1} $.
    \end{lemma}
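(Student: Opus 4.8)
The plan is to run the energy computation of Lemma~\ref{lem:dg_estimate_tf1} a second time, but with a \emph{Gauss-weighted} family of test functions chosen so that the coercivity inequalities \eqref{eq:dg_auxiliary_III-1}--\eqref{eq:dg_auxiliary_III-2} return $L^{2}(I_{n})$-norms of the error components rather than their endpoint values. Writing each polynomial error as $x=\sum_{i=0}^{k}x_{i}L_{n,i}^{\textnormal{G},0}$ with the weights $\beta_{i}$ fixed in Lemma~\ref{lem:dg_auxiliary_III} and abbreviating $\widehat{x}:=\sum_{i=1}^{k}\beta_{i}x_{i}L_{n,i}^{\textnormal{G}}\in\mathbb{P}_{k-1}(I_{n};\cdot)$, I would test \eqref{eq:dg_dyn_biot_var_eq} with the velocity-type functions $\bm{\varphi}_{\tau,h}^{\bm{u}}:=\tau\,\widehat{\bm{e}_{\bm{v}}}$, $\bm{\varphi}_{\tau,h}^{\bm{w}}:=\tau\,\widehat{\bm{e}_{\bm{w}}}$, $\varphi_{\tau,h}^{p}:=\tau\,\widehat{e_{p}}$, together with a companion $\bm{\varphi}_{\tau,h}^{\bm{v}}$ in \eqref{eq:dg_dyn_biot_var_eq-2} whose role is to substitute $\bm{e}_{\bm{v}}$ for $\partial_{t}\bm{e}_{\bm{u}}$ in the slots where it is needed, exactly as in \eqref{eq:prf_dg_estimate_tf1-5}. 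The factor $\tau$ on the inertial, flux and pressure slots is dictated by the two families of Lemma~\ref{lem:dg_auxiliary_III}: the terms carrying a time derivative are bounded below only through \eqref{eq:dg_auxiliary_III-1}, which costs a factor $\tau^{-1}$, so multiplying those slots by $\tau$ places the energy contributions $\norm{\bm{e}_{\bm{u}}}_{L^{2}(I_{n};\bm{U}_{h})}^{2}$, $\norm{\bm{M}_{\rho}^{1/2}(\bm{e}_{\bm{v}},\bm{e}_{\bm{w}})^{\top}}_{L^{2}(I_{n};\bm{L}^{2})}^{2}$ and $s_{0}\norm{e_{p}}_{L^{2}(I_{n};L^{2})}^{2}$ on the common scale $C_{L^{2}}=\tfrac12 c_{1}C_{1}$ seen in \eqref{eq:dg_estimate_tf2}, while the dissipation $\norm{\bm{K}^{-1/2}\bm{e}_{\bm{w}}}_{L^{2}(I_{n};\bm{L}^{2})}^{2}$ inherits the explicit factor $\tau$.

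With this choice the inertial block is already in the right form: since $\bm{M}_{\rho}$ is a constant symmetric positive definite matrix that commutes with $\partial_{t}$ and with the time-weighting, $\int_{I_{n}}\scalar{\bm{M}_{\rho}\partial_{t}(\bm{e}_{\bm{v}},\bm{e}_{\bm{w}})^{\top}}{\tau(\widehat{\bm{e}_{\bm{v}}},\widehat{\bm{e}_{\bm{w}}})^{\top}}\mathrm{d}t$ is handled by \eqref{eq:dg_auxiliary_III-1} applied to $\bm{M}_{\rho}^{1/2}(\bm{e}_{\bm{v}},\bm{e}_{\bm{w}})^{\top}$, and likewise the storage term $s_{0}\partial_{t}e_{p}$. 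The dissipation $\scalar{\bm{K}^{-1}\bm{e}_{\bm{w}}}{\tau\widehat{\bm{e}_{\bm{w}}}}$, which appears without a time derivative, is bounded below by the field-type inequality \eqref{eq:dg_auxiliary_III-2} and produces $\tau$ times $\norm{\bm{K}^{-1/2}\bm{e}_{\bm{w}}}_{L^{2}(I_{n};\bm{L}^{2})}^{2}$. The elastic contribution $\tau\int_{I_{n}}a_{h}(\bm{e}_{\bm{u}},\widehat{\bm{e}_{\bm{v}}})\,\mathrm{d}t$ is the one requiring \eqref{eq:dg_dyn_biot_var_eq-2}: replacing $\bm{e}_{\bm{v}}$ by $\partial_{t}\bm{e}_{\bm{u}}$ turns $\widehat{\bm{e}_{\bm{v}}}$ into the object $\sum_{i,j}\beta_{j}(\bm{e}_{\bm{u}})_{i}\partial_{t}L_{n,i}^{\textnormal{G},0}(t_{n,j}^{\textnormal{G}})L_{n,j}^{\textnormal{G}}$ of Lemma~\ref{lem:dg_auxiliary_II}, after which the $a_{h}$-analogues of \eqref{eq:dg_auxiliary_II-1} and \eqref{eq:dg_auxiliary_III-1} yield $\norm{\bm{e}_{\bm{u}}}_{L^{2}(I_{n};\bm{U}_{h})}^{2}$ up to the boundary term. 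The four pressure-flux couplings are reorganized with the self-adjointness identity \eqref{eq:dg_auxiliary_I-2}: the flux-pressure terms of \eqref{eq:dg_dyn_biot_var_eq-1} and \eqref{eq:dg_dyn_biot_var_eq-3} cancel as in \eqref{eq:prf_dg_estimate_tf1-9}--\eqref{eq:prf_dg_estimate_tf1-10}, while the displacement-pressure terms are absorbed into the coercive quantities by Young's inequality; the divergence term $\tfrac12\tau^{2}c_{2}\norm{\divv(\bm{\mathcal{I}}_{\tau}^{\textnormal{G}}\bm{e}_{\bm{w}})}_{L^{2}(I_{n};L^{2})}^{2}$ is carried along on the left, matching the perturbation already present on the right of \eqref{eq:dg_estimate_tf1}.

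The remaining steps are bookkeeping. Every one-sided limit $\norm{\cdot(t_{n-1}^{+})}$ produced by \eqref{eq:dg_auxiliary_III-1}--\eqref{eq:dg_auxiliary_III-2} is replaced by the value at $t_{n-1}$ through Lemma~\ref{lem:dg_estimate_limits}, which generates the factors $(1+\tau\varepsilon_{1})$ and the higher-order remainders $\tau^{2(k+1)}\norm{(\bm{u},\bm{w},p)}_{n}^{2}$; combined with the $\tau$ prefactor this gives precisely the boundary term $C_{2}c_{1}\tau(1+\tau\varepsilon_{1})(\cdots)$ on the right of \eqref{eq:dg_estimate_tf2}. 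All consistency contributions coming from $\bm{\eta}$ and the residuals $\bm{r}$ on the right-hand sides of \eqref{eq:dg_dyn_biot_var_eq} are estimated by Lemmas~\ref{lem:dg_aux_estimates_rhs_I}--\ref{lem:dg_aux_estimates_rhs_II}, using the stability bounds \eqref{eq:dg_auxiliary_I-3} and \eqref{eq:dg_auxiliary_I-5} for the weighting $\widehat{\cdot}$, and a Young inequality with parameter $\varepsilon_{2}$ then separates the small perturbation terms (those multiplied by $\varepsilon_{2}$ in \eqref{eq:dg_estimate_tf2}) from the coercive left-hand side; the source interpolation error contributes the last term $\tau^{2k+3}\norm{\partial_{t}^{k+1}(\bm{f},\bm{g})^{\top}}_{L^{2}(I_{n};\bm{L}^{2})}^{2}$.

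The step I expect to be hardest is twofold. First, the elastic-kinematic reconciliation: the substitution $\bm{e}_{\bm{v}}\leftrightarrow\partial_{t}\bm{e}_{\bm{u}}$ is only available in the $L^{2}$-duality of \eqref{eq:dg_dyn_biot_var_eq-2}, whereas it must be carried out inside the $a_{h}$-pairing, so one has to pass through the $a_{h}$-versions of the temporal identities of Lemmas~\ref{lem:dg_auxiliary_II}--\ref{lem:dg_auxiliary_III} and keep the mismatch under control. Second, and more delicate, is keeping the coefficient of $\norm{\bm{K}^{-1/2}\bm{e}_{\bm{w}}}_{L^{2}(I_{n};\bm{L}^{2})}^{2}$ positive: the boundary contribution of the flux must be estimated by its interior $L^{2}(I_{n})$-norm through the temporal inverse inequality \eqref{eq:dg_auxiliary_III-5}, which introduces the constant $C_{\infty}$ linking the $L^{\infty}$- and $L^{2}$-norms in time and generates the negative amount $-c_{1}C_{\infty}C_{2}(1+\varepsilon_{1}\tau)$ that must stay dominated by the dissipation actually gained. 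Guaranteeing $C_{L^{2}}-c_{1}C_{\infty}C_{2}(1+\varepsilon_{1}\tau)>0$ is therefore the balancing point that implicitly restricts the admissible time step $\tau$ and the free constants $c_{1},c_{2},\varepsilon_{1},\varepsilon_{2}$, and reconciling the $\tau$- and $\tau^{-1}$-scalings of the two inequality families under a single weighting is the main technical obstacle throughout.
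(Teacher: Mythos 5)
Your overall architecture matches the paper's: a second energy identity with Gauss/$\beta$-weighted test functions scaled by $\tau$, coercivity from \eqref{eq:dg_auxiliary_III-1}--\eqref{eq:dg_auxiliary_III-2}, endpoint terms handled by Lemma~\ref{lem:dg_estimate_limits} with the $L^{2}$--$L^{\infty}$ inverse inequality producing the $-c_{1}C_{\infty}C_{2}(1+\varepsilon_{1}\tau)$ deficit, and consistency terms absorbed via Lemmas~\ref{lem:dg_aux_estimates_rhs_I}--\ref{lem:dg_aux_estimates_rhs_II} and Young's inequality. However, there are two concrete gaps in your choice of test functions. First, you test the momentum equation with $\tau\,\widehat{\bm{e}_{\bm{v}}}$, which forces you to convert $\bm{e}_{\bm{v}}$ into $\partial_{t}\bm{e}_{\bm{u}}$ \emph{inside} the pairings $a_{h}(\bm{e}_{\bm{u}},\cdot)$ and $\scalar{\alpha e_{p}}{\divv(\cdot)}$. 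You correctly identify this as the hardest step, but it is not merely hard: equation \eqref{eq:dg_dyn_biot_var_eq-2} only controls $\partial_{t}\bm{e}_{\bm{u}}-\bm{e}_{\bm{v}}$ in $L^{2}(\Omega)$-duality against $\mathbb{P}_{k-1}(I_{n};\bm{V}_{h})$, and representing $a_{h}(\bm{e}_{\bm{u}},\cdot)$ or $\scalar{\alpha e_{p}}{\divv(\cdot)}$ by an $L^{2}$ functional costs a spatial inverse inequality of order $h^{-1}$, which destroys the estimate. The paper never creates this problem: it takes $\bm{\varphi}_{\tau,h}^{\bm{u}}:=c_{1}\tau\,\bm{e}_{\bm{u},\beta}$ (built from $\bm{e}_{\bm{u}}$, not $\bm{e}_{\bm{v}}$), so that \eqref{eq:dg_auxiliary_II-1} and \eqref{eq:dg_auxiliary_III-1} apply directly in the $a_{h}$-inner product, and the pressure coupling cancels exactly against $\scalar{\alpha\divv(\partial_{t}\bm{e}_{\bm{u}})}{\varphi_{1}^{p}}$ as in \eqref{eq:prf_dg_estimate_tf2-10}; the kinematic equation is instead tested with $-c_{1}\tau(\bar{\rho}\,\bm{e}_{\bm{v},\beta}+\rho_{f}\,\bm{e}_{\bm{w},\beta})$, and the antisymmetry identity \eqref{eq:dg_auxiliary_II-2} closes the inertia block. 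Your proposal lacks this mechanism.

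Second, your pressure test function $\tau\,\widehat{e_{p}}$ cannot produce the term $\tfrac{1}{2}\tau^{2}c_{2}\norm{\divv(\bm{\mathcal{I}}_{\tau}^{\textnormal{G}}\bm{e}_{\bm{w}})}_{L^{2}(I_{n};L^{2})}^{2}$ on the left-hand side of \eqref{eq:dg_estimate_tf2}. In the paper this term arises solely from the additional component $\varphi_{2}^{p}:=c_{2}\tau^{2}\divv(\bm{\mathcal{I}}_{\tau}^{\textnormal{G}}\bm{e}_{\bm{w}})$ of the pressure test function, whose pairing with $\divv(\bm{e}_{\bm{w}})$ in the mass balance yields exactly that square (see \eqref{eq:prf_dg_estimate_tf2-14}), at the price of the extra perturbations \eqref{eq:prf_dg_estimate_tf2-12}--\eqref{eq:prf_dg_estimate_tf2-13} controlled by the smallness condition on $c_{2}$. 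You assert the divergence term is ``carried along on the left'' but provide no source for it; without $\varphi_{2}^{p}$ the stated inequality is not obtained, and the absorption of the corresponding $\varepsilon_{2}\tau^{2}c_{2}$-term from the right-hand side of Lemma~\ref{lem:dg_estimate_tf1} in the proof of Theorem~\ref{thm:dg_main_result} fails.
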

    
    \begin{proof}
        Here, we use the test functions
        \begin{align}
            \label{eq:prf_dg_estimate_tf2-1}
            \begin{aligned}
                \bm{\varphi}_{\tau, h}^{\bm{u}} & := c_{1} \tau  \bm{e}_{\bm{u}, \beta}, & \quad
                \bm{\varphi}_{\tau, h}^{\bm{v}} & := - c_{1} \tau  (\bar{\rho}  \bm{e}_{\bm{v}, \beta} + \rho_{f}  \bm{e}_{\bm{w}, \beta}), & \quad
                \bm{\varphi}_{\tau, h}^{\bm{w}} & := c_{1} \tau \sum_{i=1}^{k} \beta_{i}  \bm{e}_{\bm{w}}(t_{n, i}^{\text{G}})  L_{n, i}^{\text{G}}, \\
                \varphi_{\tau, h}^{p} & := \varphi^{p}_{1} + \varphi^{p}_{2}, & \quad
                \varphi^{p}_{1} & := c_{1} \tau \sum_{i=1}^{k} \beta_{i}  e_{p}(t_{n, i}^{\text{G}})  L_{n, i}^{\text{G}}, & \quad
                \varphi^{p}_{2} & := c_{2} \tau^{2} \divv(\bm{\mathcal{I}}_{\tau}^{\text{G}} \bm{e}_{\bm{w}}),
            \end{aligned}
        \end{align}
        with
        \begin{align*}
            c_{1} & < \frac{1}{C_{2} (1 + \tau \varepsilon_{1})} \min\{C_{L^{2}} - \varepsilon_{1}, \frac{1}{4 C_{\infty}}\}, &
            c_{2} & \leq \frac{C_{1}}{2 C_{3}} \min\{\frac{\lambda}{\alpha^{2}}, \frac{1}{s_{0}^{2}}\},
        \end{align*}
        where $ C_{\infty} $ denotes the constant in the $ L^{2} $--$ L^{\infty} $ inequality.
        Further, we set
        \begin{align*}
            e_{y, \beta} & := \sum_{i=0}^{k} \sum_{j=1}^{k} \beta_{j}  e_{y}(t_{n, i}^{\text{G}, 0})  \partial_{t} L_{n, i}^{\text{G}, 0}(t_{n, j}^{\text{G}})  L_{n, j}^{\text{G}}(t),
        \end{align*}
        as well as $ \beta_{j} := (\hat{t}_{j}^{\text{G}})^{-1} $ for $ j = 1, \ldots, k $ and $ \beta_{0} := 1 $.
        Exploiting \eqref{eq:dg_auxiliary_III-4} and \eqref{eq:prf_dg_estimate_tf1-1} shows
        \begin{align}
            \label{eq:prf_dg_estimate_tf2-2}
            \norm{\bm{\varphi}_{\tau, h}^{\bm{u}}}_{L^{2}(I_{n}; \bm{L}^{2})}
            & \leq c c_{1} \tau \norm{\partial_{t} \bm{e}_{\bm{u}}}_{L^{2}(I_{n}; \bm{L}^{2})}
            \leq c c_{1} \tau \norm{\bm{e}_{\bm{v}}}_{L^{2}(I_{n}; \bm{L}^{2})}
            + c \tau^{k+2}  \norm{(\bm{u}, \bm{w}, p)}_{n}.
        \end{align}
        Similar to the proofs of \eqref{eq:dg_auxiliary_III-4}--\eqref{eq:dg_auxiliary_III-5}, it follows
        \begin{align}
            \label{eq:prf_dg_estimate_tf2-3}
            \norm{\bm{\varphi}_{\tau, h}^{\bm{u}}}_{L^{2}(I_{n}; \bm{U}_{h})}
            & \leq c c_{1} \tau \norm{\partial_{t} \bm{e}_{\bm{u}}}_{L^{2}(I_{n}; \bm{U}_{h})}
            \leq c c_{1} \norm{\bm{e}_{\bm{u}}}_{L^{2}(I_{n}; \bm{U}_{h})}.
        \end{align}
        The usage of \eqref{eq:dg_auxiliary_I-3} results in
        \begin{subequations}
            \label{eq:prf_dg_estimate_tf2-4}
            \begin{align}
                \norm{\bm{\varphi}_{\tau, h}^{\bm{w}}}_{L^{2}(I_{n}; \bm{L}^{2})}
                & = c_{1} \tau \norm{\sum_{i=1}^{k} \beta_{i}  \bm{e}_{\bm{w}}(t_{n, i}^{\text{G}})  L_{n, i}^{\text{G}}}_{L^{2}(I_{n}; \bm{L}^{2})}
                \leq c c_{1} \tau \norm{\bm{e}_{\bm{w}}}_{L^{2}(I_{n}; \bm{L}^{2})}, \\
                \norm{\divv(\bm{\varphi}_{\tau, h}^{\bm{w}})}_{L^{2}(I_{n}; L^{2})}
                & \leq c c_{1} \tau \norm{\divv(\bm{\mathcal{I}}_{\tau}^{\text{G}} \bm{e}_{\bm{w}})}_{L^{2}(I_{n}; L^{2})}.
            \end{align}
        \end{subequations}
        From \eqref{eq:dg_auxiliary_I-3} it follows
        \begin{align}
            \label{eq:prf_dg_estimate_tf2-5}
            \norm{\varphi_{\tau, h}^{p}}_{L^{2}(I_{n}; L^{2})}
            & \leq c c_{1} \tau \norm{e_{p}}_{L^{2}(I_{n}; L^{2})}
            + c_{2} \tau^{2} \norm{\divv(\bm{\mathcal{I}}_{\tau}^{\text{G}} \bm{e}_{\bm{w}})}_{L^{2}(I_{n}; L^{2})}.
        \end{align}
        Inserting \eqref{eq:prf_dg_estimate_tf2-1} followed by \eqref{eq:dg_auxiliary_III-4} and \eqref{eq:dg_auxiliary_III-5} yields
        \begin{align}
            \label{eq:prf_dg_estimate_tf2-6}
            \norm{\bm{\varphi}_{\tau, h}^{\bm{v}}}_{L^{2}(I_{n}; \bm{L}^{2})}
            & \leq c c_{1} \tau \left( \norm{\partial_{t} \bm{e}_{\bm{v}}}_{L^{2}(I_{n}; \bm{L}^{2})}
            + \norm{\partial_{t} \bm{e}_{\bm{w}}}_{L^{2}(I_{n}; \bm{L}^{2})} \right)
            \leq c c_{1} \left( \norm{\bm{e}_{\bm{v}}}_{L^{2}(I_{n}; \bm{L}^{2})}
            + \norm{\bm{e}_{\bm{w}}}_{L^{2}(I_{n}; \bm{L}^{2})} \right).
        \end{align}
        Combining Lemma~\ref{lem:dg_auxiliary_II} and \eqref{eq:dg_auxiliary_I-1} yields
        \begin{align}
            \label{eq:prf_dg_estimate_tf2-7}
            \begin{aligned}
                &  c_{1} \tau \int_{I_{n}} \scalar{\bm{M}_{\rho}
                    \begin{pmatrix}
                        \partial_{t} \bm{e}_{\bm{v}} \\
                        \partial_{t} \bm{e}_{\bm{w}}
                    \end{pmatrix}
                }{
                    \begin{pmatrix}
                        \bm{e}_{\bm{u}, \beta} \\
                        \sum_{i=1}^{k} \beta_{i}  \bm{e}_{\bm{w}}(t_{n, i}^{\text{G}})  L_{n, i}^{\text{G}}
                    \end{pmatrix}
                }
                - \scalar{\partial_{t} \bm{e}_{\bm{u}} - \bm{e}_{\bm{v}}}{\bar{\rho}  \bm{e}_{\bm{v}, \beta} + \rho_{f}  \bm{e}_{\bm{w}, \beta}} \mathrm{d}t \\
                = &  c_{1} \tau \int_{I_{n}} \scalar{\bar{\rho}  \partial_{t} \bm{e}_{\bm{v}} + \rho_{f}  \partial_{t} \bm{e}_{\bm{w}}}{\bm{e}_{\bm{u}, \beta}}
                - \scalar{\bm{e}_{\bm{u}, \beta}}{\bar{\rho}  \partial_{t} \bm{e}_{\bm{v}} + \rho_{f}  \partial_{t} \bm{e}_{\bm{w}}} \mathrm{d}t \\
                & + c_{1} \tau \int_{I_{n}} \scalar{\rho_{f}  \partial_{t} \bm{e}_{\bm{v}} + \rho_{w}  \partial_{t} \bm{e}_{\bm{w}}}{\sum_{i=1}^{k} \beta_{i}  \bm{e}_{\bm{w}}(t_{n, i}^{\text{G}})  L_{n, i}^{\text{G}}}
                + \scalar{\sum_{i=0}^{k} \beta_{i}  \bm{e}_{\bm{v}}(t_{n, i}^{\text{G}, 0})  L_{n, i}^{\text{G}, 0}}{\bar{\rho}  \partial_{t} \bm{e}_{\bm{v}} + \rho_{f}  \partial_{t} \bm{e}_{\bm{w}}} \mathrm{d}t \\
                = &  c_{1} \tau \int_{I_{n}} \scalar{\bm{M}_{\rho}
                    \begin{pmatrix}
                        \partial_{t} \bm{e}_{\bm{v}} \\
                        \partial_{t} \bm{e}_{\bm{w}}
                    \end{pmatrix}
                }{
                    \begin{pmatrix}
                        \sum_{i=0}^{k} \beta_{i}  \bm{e}_{\bm{v}}(t_{n, i}^{\text{G}, 0})  L_{n, i}^{\text{G}, 0} \\
                        \sum_{i=0}^{k} \beta_{i}  \bm{e}_{\bm{w}}(t_{n, i}^{\text{G}, 0})  L_{n, i}^{\text{G}, 0}
                    \end{pmatrix}
                } \\
                \geq &  C_{1} c_{1} \norm{\bm{M}_{\rho}^{1/2}
                    \begin{pmatrix}
                        \bm{e}_{\bm{v}} \\
                        \bm{e}_{\bm{w}}
                    \end{pmatrix}
                }_{L^{2}(I_{n}; \bm{L}^{2})}^{2}
                - C_{2} c_{1} \tau \norm{\bm{M}_{\rho}^{1/2}
                    \begin{pmatrix}
                        \bm{e}_{\bm{v}}(t_{n-1}^{+}) \\
                        \bm{e}_{\bm{w}}(t_{n-1}^{+})
                    \end{pmatrix}
                }_{\bm{L}^{2}(\Omega)}^{2},
            \end{aligned}
        \end{align}
        where we have used \eqref{eq:dg_auxiliary_III-1} in the last step.
        With \eqref{eq:dg_auxiliary_III-2}, we get
        \begin{align}
            \label{eq:prf_dg_estimate_tf2-8}
            \begin{aligned}
                \int_{I_{n}} \scalar{\bm{K}^{-1} \bm{e}_{\bm{w}}}{\bm{\varphi}_{\tau, h}^{\bm{w}}} \mathrm{d}t
                & = c_{1} \tau \int_{I_{n}} \scalar{\bm{K}^{-1} \bm{e}_{\bm{w}}}{\sum_{i=1}^{k} \beta_{i}  \bm{e}_{\bm{w}}(t_{n, i}^{\text{G}})  L_{n, i}^{\text{G}}} \\
                & \geq \tau c_{1} C_{1} \norm{\bm{K}^{-1/2} \bm{e}_{\bm{w}}}_{L^{2}(I_{n}; \bm{L}^{2})}^{2}
                - c_{1} C_{2} \tau^{2} \norm{\bm{K}^{-1/2} \bm{e}_{\bm{w}}(t_{n-1}^{+})}_{\bm{L}^{2}(\Omega)}^{2}
            \end{aligned}
        \end{align}
        From \eqref{eq:dg_auxiliary_I-1} and \eqref{eq:dg_auxiliary_III-1} it follows
        \begin{align}
            \label{eq:prf_dg_estimate_tf2-9}
            \begin{aligned}
                & \quad \int_{I_{n}} \scalar{s_{0}  \partial_{t} e_{p}}{\varphi^{p}_{1}} \mathrm{d}t
                = s_{0} c_{1} \tau \int_{I_{n}} \scalar{\partial_{t} e_{p}}{\sum_{i=1}^{k} \beta_{i}  e_{p}(t_{n, i}^{\text{G}})  L_{n, i}^{\text{G}}} \mathrm{d}t \\
                & = s_{0} c_{1} \tau \int_{I_{n}} \scalar{\partial_{t} e_{p}}{\sum_{i=0}^{k} \beta_{i}  e_{p}(t_{n, i}^{\text{G}, 0})  L_{n, i}^{\text{G}, 0}} \mathrm{d}t
                \geq C_{1} c_{1} s_{0} \norm{e_{p}}_{L^{2}(I_{n}; L^{2})}^{2}
                - C_{2} c_{1} \tau s_{0} \norm{e_{p}(t_{n-1}^{+})}_{L^{2}(\Omega)}^{2}.
            \end{aligned}
        \end{align}
        Moreover, using~\eqref{eq:dg_auxiliary_I-1} and \eqref{eq:dg_auxiliary_II-1} we obtain
        \begin{align}
            \label{eq:prf_dg_estimate_tf2-10}
            \begin{aligned}
                \int_{I_{n}} \scalar{\alpha \divv(\partial_{t} \bm{e}_{\bm{u}})}{\varphi^{p}_{1}} \mathrm{d}t
                & = c_{1} \tau \int_{I_{n}} \scalar{\alpha \divv(\partial_{t} \bm{e}_{\bm{u}})}{\sum_{i=1}^{k} \beta_{i}  e_{p}(t_{n, i}^{\text{G}})  L_{n, i}^{\text{G}}} \mathrm{d}t \\
                & = c_{1} \tau \int_{I_{n}} \scalar{\alpha \divv(\partial_{t} \bm{e}_{\bm{u}})}{\sum_{i=0}^{k} \beta_{i}  e_{p}(t_{n, i}^{\text{G}, 0})  L_{n, i}^{\text{G}, 0}} \mathrm{d}t \\
                & = \int_{I_{n}} \scalar{\alpha  e_{p}}{\divv(c_{1} \tau  \bm{e}_{\bm{u}, \beta})} \mathrm{d}t
                = \int_{I_{n}} \scalar{\alpha  e_{p}}{\divv(\bm{\varphi}_{\tau, h}^{\bm{u}})} \mathrm{d}t.
            \end{aligned}
        \end{align}
        With \eqref{eq:dg_auxiliary_I-2} we infer
        \begin{align}
            \label{eq:prf_dg_estimate_tf2-11}
            \begin{aligned}
                \int_{I_{n}} \scalar{\divv(\bm{e}_{\bm{w}})}{\varphi^{p}_{1}} \mathrm{d}t
                & = c_{1} \tau \int_{I_{n}} \scalar{\divv(\bm{e}_{\bm{w}})}{\sum_{i=1}^{k} \beta_{i}  e_{p}(t_{n, i}^{\text{G}})  L_{n, i}^{\text{G}}} \mathrm{d}t \\
                & = c_{1} \tau \int_{I_{n}} \scalar{e_{p}}{\divv(\sum_{i=1}^{k} \beta_{i}  \bm{e}_{\bm{w}}(t_{n, i}^{\text{G}})  L_{n, i}^{\text{G}})} \mathrm{d}t
                = \int_{I_{n}} \scalar{e_{p}}{\divv(\bm{\varphi}_{\tau, h}^{\bm{w}})} \mathrm{d}t.
            \end{aligned}
        \end{align}
        Inserting \eqref{eq:prf_dg_estimate_tf2-1} and then using Cauchy-Schwarz and Young's inequality as well as \eqref{eq:dg_auxiliary_III-5} yields
        \begin{align}
            \label{eq:prf_dg_estimate_tf2-12}
            \begin{aligned}
                \int_{I_{n}} \scalar{s_{0}  \partial_{t} e_{p}}{\varphi^{p}_{2}} \mathrm{d}t
                & = c_{2} \tau^{2} \int_{I_{n}} \scalar{s_{0}  \partial_{t} e_{p}}{\divv(\bm{\mathcal{I}}_{\tau}^{\text{G}} \bm{e}_{\bm{w}})} \mathrm{d}t \\
                & \geq - c_{2} \tau^{2} \left( s_{0}^2{} \norm{\partial_{t} e_{p}}_{L^{2}(I_{n}; L^{2})}^{2}
                + \frac{1}{4} \norm{\divv(\bm{\mathcal{I}}_{\tau}^{\text{G}} \bm{e}_{\bm{w}})}_{L^{2}(I_{n}; L^{2})}^{2} \right) \\
                & \geq - c_{2} \left( c \norm{e_{p}}_{L^{2}(I_{n}; L^{2})}^{2}
                + \frac{1}{4} \tau^{2} \norm{\divv(\bm{\mathcal{I}}_{\tau}^{\text{G}} \bm{e}_{\bm{w}})}_{L^{2}(I_{n}; L^{2})}^{2} \right).
            \end{aligned}
        \end{align}
        Analogously, we obtain
        \begin{align}
            \label{eq:prf_dg_estimate_tf2-13}
            \begin{aligned}
                \int_{I_{n}} \scalar{\alpha \divv(\partial_{t} \bm{e}_{\bm{u}})}{\varphi^{p}_{2}} \mathrm{d}t
                & = c_{2} \tau^{2} \int_{I_{n}} \scalar{\alpha \divv(\partial_{t} \bm{e}_{\bm{u}})}{\divv(\bm{\mathcal{I}}_{\tau}^{\text{G}} \bm{e}_{\bm{w}})} \mathrm{d}t \\
                & \geq - c_{2} \left( c \norm{\divv(\bm{e}_{\bm{u}})}_{L^{2}(I_{n}; L^{2})}^{2}
                + \frac{1}{4} \tau^{2} \norm{\divv(\bm{\mathcal{I}}_{\tau}^{\text{G}} \bm{e}_{\bm{w}})}_{L^{2}(I_{n}; L^{2})}^{2} \right).
            \end{aligned}
        \end{align}
        Exploiting \eqref{eq:dg_auxiliary_I-1} leads to
        \begin{align}
            \label{eq:prf_dg_estimate_tf2-14}
            \int_{I_{n}} \scalar{\divv(\bm{e}_{\bm{w}})}{\varphi^{p}_{2}} \mathrm{d}t
            & = \int_{I_{n}} \scalar{\sum_{i=1}^{k} \divv(\bm{e}_{\bm{w}}(t_{n, i}^{\text{G}}))  L_{n, i}^{\text{G}}}{\varphi^{p}_{2}} \mathrm{d}t
            = c_{2} \tau^{2} \norm{\divv(\bm{\mathcal{I}}_{\tau}^{\text{G}} \bm{e}_{\bm{w}})}_{L^{2}(I_{n}; L^{2})}^{2}.
        \end{align}
        Applying~\eqref{eq:dg_auxiliary_II-1}, \eqref{eq:dg_auxiliary_III-1} along with the coercivity of $ a_{h}(\cdot, \cdot) $, we get
        \begin{align}
            \label{eq:prf_dg_estimate_tf2-15}
            \begin{aligned}
                \int_{I_{n}} a_{h}(\bm{e}_{\bm{u}}, \bm{\varphi}_{\tau, h}^{\bm{u}}) \mathrm{d}t
                & = \int_{I_{n}} a_{h}(\bm{e}_{\bm{u}}, c_{1} \tau  \bm{e}_{\bm{u}, \beta}) \mathrm{d}t
                = c_{1} \tau \int_{I_{n}} a_{h}(\sum_{i=0}^{k} \beta_{i}  \bm{e}_{\bm{u}}(t_{n, i}^{\text{G}, 0})  L_{n, i}^{\text{G}, 0}, \partial_{t} \bm{e}_{\bm{u}}) \mathrm{d}t \\
                & \geq C_{1} c_{1} \norm{\bm{e}_{\bm{u}}}_{L^{2}(I_{n}; \bm{U}_{h})}^{2}
                - C_{2} c_{1} \tau \norm{\bm{e}_{\bm{u}}(t_{n-1}^{+})}_{\bm{U}_{h}}^{2}.
            \end{aligned}
        \end{align}
        Using \eqref{eq:prf_dg_estimate_tf2-2}--\eqref{eq:prf_dg_estimate_tf2-15}, \eqref{eq:dg_vanishing_term} together with Lemma~\ref{lem:dg_aux_estimates_rhs_I}--Lemma~\ref{lem:dg_estimate_limits} and Young's inequality with $ \varepsilon_{2} > 0 $, we infer \eqref{eq:dg_estimate_tf2}.
    \end{proof}
    
    Ultimately, combining Lemma~\ref{lem:dg_estimate_tf1}--Lemma~\ref{lem:dg_estimate_tf2} provides the following error estimate:
    
    \begin{theorem}
        \label{thm:dg_main_result}
        There holds
        \begin{align*}
            & \quad \norm{\bm{u}_{\tau, h} - \bm{u}}_{L^{\infty}(I; \bm{U}_{h})}^{2}
            + \norm{\bm{M}_{\rho}^{1/2}
                \begin{pmatrix}
                    \bm{v}_{\tau, h} - \bm{v} \\
                    \bm{w}_{\tau, h} - \bm{w}
                \end{pmatrix}
            }_{L^{\infty}(I; \bm{L}^{2})}^{2}
            + \norm{\bm{K}^{-1/2} (\bm{w}_{\tau, h} - \bm{w})}_{L^{\infty}(I; \bm{L}^{2})}^{2}
            + s_{0} \norm{p_{\tau, h} - p}_{L^{\infty}(I; L^{2})}^{2} \\
            & \leq c \left( \tau^{2(k+1)} + h^{2(\ell+1)} \right) \norm{(\bm{u}, \bm{w}, p)}_{I}^{2}
            + c \tau^{2k+2} \norm{\partial_{t}^{k+1}
                \begin{pmatrix}
                    \bm{f} \\
                    \bm{g}
                \end{pmatrix}
            }_{L^{2}(I; \bm{L}^{2})}^{2}.
        \end{align*}
    \end{theorem}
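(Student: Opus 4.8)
The plan is to fuse the one-step energy balance of Lemma~\ref{lem:dg_estimate_tf1} with the $L^{2}(I_{n})$-coercivity estimate of Lemma~\ref{lem:dg_estimate_tf2} and then to run a discrete Grönwall argument across the subintervals. It is convenient to abbreviate the combined nodal energy by
\[
\mathcal{E}(t) := a_{h}(\bm{e}_{\bm u}(t),\bm{e}_{\bm u}(t)) + \norm{\bm{M}_{\rho}^{1/2}(\bm{e}_{\bm v}(t),\bm{e}_{\bm w}(t))}_{\bm L^{2}(\Omega)}^{2} + s_{0}\norm{e_{p}(t)}_{L^{2}(\Omega)}^{2},
\]
to write $\mathcal{L}_{n}$ for the $L^{2}(I_{n})$-aggregate of the same fields that appears with the factor $\varepsilon_{2}$ on the right of Lemma~\ref{lem:dg_estimate_tf1} (the $\bm U_{h}$-, $\bm M_{\rho}$- and $L^{2}$-norms of $\bm{e}_{\bm u}$, $(\bm{e}_{\bm v},\bm{e}_{\bm w})$ and $e_{p}$, together with the $\tau^{2}c_{2}\norm{\divv(\bm{\mathcal{I}}_{\tau}^{\text{G}}\bm{e}_{\bm w})}_{L^{2}(I_{n};L^{2})}^{2}$ contribution), and to collect the consistency terms into $D_{n} := c(\tau^{2(k+1)}+h^{2(\ell+1)})\norm{(\bm u,\bm w,p)}_{n}^{2} + c\tau^{2(k+1)}\norm{\partial_{t}^{k+1}(\bm f,\bm g)}_{L^{2}(I_{n};\bm L^{2})}^{2}$. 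With this notation Lemma~\ref{lem:dg_estimate_tf1} says in essence that $\mathcal{E}(t_{n}) + \norm{\bm K^{-1/2}\bm{e}_{\bm w}}_{L^{2}(I_{n};\bm L^{2})}^{2}$ is bounded by $\mathcal{E}(t_{n-1})$, plus $\tau\varepsilon_{1}\mathcal{E}(t_{n-1})$, plus $\varepsilon_{2}$ times $\mathcal{L}_{n}$ and the same $\bm K$-dissipation, plus $D_{n}$, whereas Lemma~\ref{lem:dg_estimate_tf2} supplies the positive control $c\,\mathcal{L}_{n}$ on its left in terms of $\tau\,\mathcal{E}(t_{n-1})$ and $\tau D_{n}$, with only $\tau\varepsilon_{2}$- and $\varepsilon_{2}$-weighted copies of $\mathcal{L}_{n}$ on its right. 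Throughout I use the coercivity of $a_{h}(\cdot,\cdot)$ to identify the $\norm{\bm{e}_{\bm u}}_{\bm U_{h}}^{2}$ appearing in the $\tau\varepsilon_{1}$-terms of Lemma~\ref{lem:dg_estimate_limits} with $a_{h}(\bm{e}_{\bm u},\bm{e}_{\bm u})$, so that every control is phrased through $\mathcal{E}$.

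First I would fix the free parameters. Choosing $\varepsilon_{2}$ small enough to absorb the $\tau\varepsilon_{2}$- and $\varepsilon_{2}$-weighted copies of $\mathcal{L}_{n}$ on the right of Lemma~\ref{lem:dg_estimate_tf2} into its left-hand $C_{L^{2}}$-term, and with $c_{1},c_{2}$ taken as prescribed there so that the coefficient $\tau(C_{L^{2}}-c_{1}C_{\infty}C_{2}(1+\varepsilon_{1}\tau))$ of $\norm{\bm K^{-1/2}\bm{e}_{\bm w}}_{L^{2}(I_{n};\bm L^{2})}^{2}$ stays nonnegative, Lemma~\ref{lem:dg_estimate_tf2} reduces to
\[
\mathcal{L}_{n} \le c\,\tau\,\mathcal{E}(t_{n-1}) + c\,\tau\,D_{n}.
\]
Inserting this into the $\varepsilon_{2}\mathcal{L}_{n}$ term of Lemma~\ref{lem:dg_estimate_tf1} converts it into an $O(\tau)\mathcal{E}(t_{n-1})$ contribution plus a harmless multiple of $D_{n}$, while the leftover $\varepsilon_{2}\norm{\bm K^{-1/2}\bm{e}_{\bm w}}_{L^{2}(I_{n};\bm L^{2})}^{2}$ on the right of Lemma~\ref{lem:dg_estimate_tf1} is swallowed by the coefficient $1$ of the identical term on its left as soon as $\varepsilon_{2}<1$. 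Together with the $\tau\varepsilon_{1}\mathcal{E}(t_{n-1})$ already present this gives the clean one-step recursion
\[
\mathcal{E}(t_{n}) \le (1+c\tau)\,\mathcal{E}(t_{n-1}) + c\,D_{n}, \qquad n=1,\ldots,N.
\]

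Next I would iterate. Choosing the discrete initial data $\bm{x}_{0,h}=(\bm{\mathcal{P}}_{1}\bm u_{0},\bm{\mathcal{P}}_{1}\bm v_{0},\bm{\mathcal{P}}_{2}\bm w_{0},\mathcal{P}_{3}p_{0})$, all four errors vanish at $t_{0}$ (the Gauss--Lobatto reconstructions in~\eqref{eq:dg_definition_z} reproduce the projected initial values at $t_{0}$), so $\mathcal{E}(t_{0})=0$ and the one-sided limits at $n=1$ are trivial. A discrete Grönwall inequality then yields $\max_{1\le n\le N}\mathcal{E}(t_{n}) \le e^{cT}\sum_{m=1}^{N}c\,D_{m}$, and summing the $D_{m}$ with the definition~\eqref{eq:dg_definition_norm_n} of $\norm{(\bm u,\bm w,p)}_{I}$ reproduces exactly the right-hand side of the theorem. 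This bounds $\mathcal{E}$ at every node $t_{n}$; to reach the $L^{\infty}(I;\cdot)$ norms I would repeat the energy identities of Lemma~\ref{lem:dg_estimate_tf1} with integration over $(t_{n-1},t^{\ast}]$ for arbitrary $t^{\ast}\in I_{n}$ — the fundamental-theorem-of-calculus steps such as~\eqref{eq:prf_dg_estimate_tf1-6} hold for any upper limit, and the extra volume terms are controlled by the already-established $\mathcal{L}_{n}\le c\tau(\mathcal{E}(t_{n-1})+D_{n})$ — and then take the supremum over $t^{\ast}$. The $\bm K^{-1/2}$-weighted $\bm w$-error in $L^{\infty}(I)$ is subsumed by the $\bm M_{\rho}$-energy, using that $\bm M_{\rho}$ is SPD and $\bm K^{-1}$ bounded. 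Finally, passing from the auxiliary errors $\bm{e}_{\ast},e_{p}$ to the genuine errors $\bm{u}_{\tau,h}-\bm u$, etc., is done through the splitting~\eqref{eq:dg_splitting} and the triangle inequality, estimating the projection parts $\bm\eta_{\ast}$ by Lemma~\ref{lem:dg_estimates_eta} and the purely temporal parts $\bm\eta_{\ast,\tau}$ by~\eqref{eq:interpolation_error_GL-1}; after squaring these are again $O(\tau^{2(k+1)}+h^{2(\ell+1)})$ and fold into the stated bound.

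The principal obstacle is the simultaneous calibration of $\varepsilon_{1},\varepsilon_{2},c_{1},c_{2}$: every $L^{2}$-in-time quantity generated on the right of Lemmas~\ref{lem:dg_estimate_tf1} and~\ref{lem:dg_estimate_tf2} — in particular the coupling term $\tau^{2}c_{2}\norm{\divv(\bm{\mathcal{I}}_{\tau}^{\text{G}}\bm{e}_{\bm w})}_{L^{2}(I_{n};L^{2})}^{2}$ and the $\bm K^{-1/2}\bm{e}_{\bm w}$-dissipation — must be dominated by a matching positive term on some left-hand side, while the residual $\tau$-weighted energy contributions stay small enough for the discrete Grönwall constant to remain independent of $\tau$ and $h$. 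The secondary technical point is the clean upgrade from the nodal estimate to the genuine $L^{\infty}(I)$ bound sketched above.
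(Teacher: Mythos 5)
Your overall architecture — combining Lemma~\ref{lem:dg_estimate_tf1} with Lemma~\ref{lem:dg_estimate_tf2}, calibrating $\varepsilon_{1},\varepsilon_{2},c_{1},c_{2}$ so that the $\varepsilon_{2}$-weighted $L^{2}(I_{n})$ and $\bm{K}^{-1/2}$-dissipation terms are absorbed, telescoping/Grönwall over the subintervals with vanishing initial errors, and finishing via the splitting~\eqref{eq:dg_splitting} with Lemma~\ref{lem:dg_estimates_eta} and~\eqref{eq:interpolation_error_GL-1} — coincides in substance with the paper's proof (the paper absorbs the accumulated $\tau\varepsilon_{1}$-terms by a ``choose $C_{L^{\infty}}\leq\tfrac12$ and hide the max'' argument rather than an explicit discrete Grönwall inequality, but that is a cosmetic difference).

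There is, however, one genuine gap: your mechanism for upgrading the nodal bound to the $L^{\infty}(I;\cdot)$ bound. You propose to ``repeat the energy identities of Lemma~\ref{lem:dg_estimate_tf1} with integration over $(t_{n-1},t^{\ast}]$ for arbitrary $t^{\ast}\in I_{n}$.'' This is not admissible here: the error equations~\eqref{eq:dg_dyn_biot_var_eq} hold only when tested against functions in $\mathbb{P}_{k-1}(I_{n};\bm{X}_{h})$ and integrated over the \emph{whole} subinterval $I_{n}$. Truncating the time integral at $t^{\ast}$ amounts to testing with $\chi_{(t_{n-1},t^{\ast}]}\partial_{t}\bm{e}_{\bm{u}}$ etc., which is not a polynomial test function, so the identities such as~\eqref{eq:prf_dg_estimate_tf1-5}--\eqref{eq:prf_dg_estimate_tf1-7} are simply not available with an arbitrary upper limit; moreover several of the manipulations in that proof (e.g.~\eqref{eq:prf_dg_estimate_tf1-9}--\eqref{eq:prf_dg_estimate_tf1-10}, which rely on the Gauss-quadrature exactness of Lemma~\ref{lem:dg_auxiliary_I}) are intrinsically tied to integration over all of $I_{n}$. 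The paper's route is different and does work: since $\bm{e}_{\bm{u}},\bm{e}_{\bm{v}},\bm{e}_{\bm{w}},e_{p}$ are polynomials of degree $k$ in time on each $I_{n}$, the $L^{\infty}$--$L^{2}$ inverse inequality in time, $\norm{f}_{L^{\infty}(I_{n})}^{2}\leq c\,\tau^{-1}\norm{f}_{L^{2}(I_{n})}^{2}$, applied to the bound $\mathcal{L}_{n}\leq c\,\tau\bigl(\mathcal{E}(t_{n-1})+D_{n}\bigr)$ that you have already derived from Lemma~\ref{lem:dg_estimate_tf2}, yields the $L^{\infty}(I_{n})$ control directly — the explicit factor $\tau$ on the right of that bound exactly cancels the $\tau^{-1}$ from the inverse estimate. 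So the missing ingredient is one inverse inequality away from material you already have; the truncated-interval energy argument as stated would fail.
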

    
    \begin{proof}
        Using Lemma~\ref{lem:dg_estimate_tf1}--Lemma~\ref{lem:dg_estimate_tf2}, we get
        \begin{align*}
            & \quad a_{h}(\bm{e}_{\bm{u}}(t_{n}), \bm{e}_{\bm{u}}(t_{n}))
            - a_{h}(\bm{e}_{\bm{u}}(t_{n-1}), \bm{e}_{\bm{u}}(t_{n-1}))
            + \frac{1}{2} C_{L^{2}} \norm{\bm{e}_{\bm{u}}}_{L^{2}(I_{n}; \bm{U}_{h})}^{2}
            - C_{L^{\infty}} \tau \norm{\bm{e}_{u}(t_{n-1})}_{\bm{U}_{h}}^{2} \\
            & \quad + \frac{1}{2}  \norm{\bm{K}^{-1/2} \bm{e}_{\bm{w}}}_{L^{2}(I_{n}; \bm{L}^{2})}^{2}
            + \norm{\bm{M}_{\rho}^{1/2}
                \begin{pmatrix}
                    \bm{e}_{\bm{v}}(t_{n}) \\
                    \bm{e}_{\bm{w}}(t_{n})
                \end{pmatrix}
            }_{\bm{L}^{2}(\Omega)}^{2}
            - \norm{\bm{M}_{\rho}^{1/2}
                \begin{pmatrix}
                    \bm{e}_{\bm{v}}(t_{n-1}) \\
                    \bm{e}_{\bm{w}}(t_{n-1})
                \end{pmatrix}
            }_{\bm{L}^{2}(\Omega)}^{2} \\
            & \quad + \frac{1}{2} C_{L^{2}} \norm{\bm{M}_{\rho}^{1/2}
                \begin{pmatrix}
                    \bm{e}_{\bm{v}} \\
                    \bm{e}_{\bm{w}}
                \end{pmatrix}
            }_{L^{2}(I_{n}; \bm{L}^{2})}^{2}
            - C_{L^{\infty}} \tau \norm{\bm{M}_{\rho}^{1/2}
                \begin{pmatrix}
                    \bm{e}_{\bm{v}}(t_{n-1}) \\
                    \bm{e}_{\bm{w}}(t_{n-1})
                \end{pmatrix}
            }_{\bm{L}^{2}(\Omega)}^{2} \\
            & \quad + s_{0} \norm{e_{p}(t_{n})}_{L^{2}(\Omega)}^{2}
            - s_{0} \norm{e_{p}(t_{n-1})}_{L^{2}(\Omega)}^{2}
            + \frac{1}{2} C_{L^{2}} s_{0} \norm{e_{p}}_{L^{2}(I_{n}; L^{2})}^{2} - \tau s_{0} C_{L^{\infty}} \norm{\bm{e}_{p}(t_{n-1})}_{L^{2}(\Omega)}^{2} \\
            & \leq c \left( \tau^{2(k+1)} + h^{2(\ell+1)} \right) \norm{(\bm{u}, \bm{w}, p)}_{n}^{2}
            + c\tau^{2k+2} \norm{\partial_{t}^{k+1}
                \begin{pmatrix}
                    \bm{f} \\
                    \bm{g}
                \end{pmatrix}
            }_{L^{2}(I_{n}; \bm{L}^{2})}^{2}.
        \end{align*}
        We sum the above equations on the interval 
        $ \tilde{I}_{n} = \cup_{i=0}^{n} I_{i} $ and 
        consequently, by setting $ \bm{e}_{\bm{u}}(0) = \bm{e}_{\bm{v}}(0) = \bm{e}_{\bm{w}}(0) = e_{p}(0) = 0 $, we obtain
        \begin{align*}
            & \quad \norm{\bm{e}_{\bm{u}}(t_{n})}_{\bm{U}_{h}}^{2}
            + \frac{1}{2} C_{L^{2}} \norm{\bm{e}_{\bm{u}}}_{L^{2}(\tilde{I}_{n}, \bm{U}_{h})}^{2}
            - C_{L^{\infty}} \max_{n} \norm{\bm{e}_{\bm{u}}(t_{n-1})}_{\bm{U}_{h}}^{2}
            + \frac{1}{2} \norm{\bm{K}^{-1/2} \bm{e}_{\bm{w}}}_{L^{2}(\tilde{I}_{n}; \bm{L}^{2})} \\
            & \quad + \norm{\bm{M}_{\rho}^{1/2}
                \begin{pmatrix}
                    \bm{e}_{\bm{v}}(t_{n}) \\
                    \bm{e}_{\bm{w}}(t_{n})
                \end{pmatrix}
            }_{\bm{L}^{2}(\Omega)}^{2}
            + \frac{1}{2} C_{L^{2}} \norm{\bm{M}_{\rho}^{1/2}
                \begin{pmatrix}
                    \bm{e}_{\bm{v}} \\
                    \bm{e}_{\bm{w}}
                \end{pmatrix}
            }_{L^{2}(\tilde{I}_{n}; \bm{L}^{2})}^{2}
            - C_{L^{\infty}} \max_{n} \norm{\bm{M}_{\rho}^{1/2}
                \begin{pmatrix}
                    \bm{e}_{\bm{v}}(t_{n-1}) \\
                    \bm{e}_{\bm{w}}(t_{n-1})
                \end{pmatrix}
            }_{\bm{L}^{2}(\Omega)}^{2} \\
            & \quad + s_{0} \norm{e_{p}(t_{n})}_{L^{2}(\Omega)}^{2}
            + \frac{1}{2} C_{L^{2}} s_{0} \norm{e_{p}}_{L^{2}(\tilde{I}_{n}; L^{2})}^{2}
            - C_{L^{\infty}} \max_{n} s_{0} \norm{e_{p}(t_{n-1})}_{L^{2}(\Omega)}^{2} \\
            & \leq c \left( \tau^{2(k+1)} + h^{2(\ell+1)} \right) \norm{(\bm{u}, \bm{w}, p)}_{I}^{2}
            + c\tau^{2k+2} \norm{\partial_{t}^{k+1}
                \begin{pmatrix}
                    \bm{f} \\
                    \bm{g}
                \end{pmatrix}
            }_{L^{2}(I; \bm{L}^{2})}^{2}.
        \end{align*}
        Now we choose $ C_{L^{\infty}} \leq \frac{1}{2} $ in order to get
        \begin{align*}
            & \quad \norm{\bm{e}_{\bm{u}}}_{L^{2}(\tilde{I}_{n}; \bm{U}_{h})}^{2}
            + \max_{n} \norm{\bm{e}_{\bm{u}}(t_{n-1})}_{\bm{U}_{h}}^{2}
            + \norm{\bm{K}^{-1/2} \bm{e}_{\bm{w}}}_{L^{2}(\tilde{I}_{n}; \bm{L}^{2})}^{2}
            + \norm{\bm{M}_{\rho}^{1/2}
                \begin{pmatrix}
                    \bm{e}_{\bm{v}} \\
                    \bm{e}_{\bm{w}}
                \end{pmatrix}
            }_{L^{2}(\tilde{I}_{n}; \bm{L}^{2})}^{2} \\
            & \quad + \max_{n} \norm{\bm{M}_{\rho}^{1/2}
                \begin{pmatrix}
                    \bm{e}_{\bm{v}}(t_{n-1}) \\
                    \bm{e}_{\bm{v}}(t_{n-1})
                \end{pmatrix}
            }_{\bm{L}^{2}(\Omega)}^{2}
            + s_{0} \norm{e_{p}}_{L^{2}(\Omega)}^{2}
            + \max_{n} s_{0} \norm{e_{p}(t_{n-1})}_{L^{2}(\Omega)}^{2} \\
            & \leq c \left( \tau^{2(k+1)} + h^{2(\ell+1)} \right) \norm{(\bm{u}, \bm{w}, p)}_{I}^{2}
            + c\tau^{2k+2} \norm{\partial_{t}^{k+1}
                \begin{pmatrix}
                    \bm{f} \\
                    \bm{g}
                \end{pmatrix}
            }_{L^{2}(I; \bm{L}^{2})}^{2}.
        \end{align*}
        Thus, we have
        \begin{align*}
            & \quad \max_{n} \norm{\bm{e}_{\bm{u}}(t_{n})}_{\bm{U}_{h}}^{2}
            + \norm{\bm{K}^{-1/2} \bm{e}_{\bm{w}}}_{L^{2}(\tilde{I}_{n}; \bm{L}^{2})}^{2}
            + \max_{n} \norm{\bm{M}_{\rho}^{1/2}
                \begin{pmatrix}
                    \bm{e}_{\bm{v}}(t_{n}) \\
                    \bm{e}_{\bm{w}}(t_{n})
                \end{pmatrix}
            }_{\bm{L}^{2}(\Omega)}^{2}
            + \max_{n} s_{0} \norm{e_{p}(t_{n})}_{L^{2}(\Omega)}^{2} \\
            & \leq c \left( \tau^{2(k+1)} + h^{2(\ell+1)} \right) \norm{(\bm{u}, \bm{w}, p)}_{I}^{2}
            + c\tau^{2k+2} \norm{\partial_{t}^{k+1}
                \begin{pmatrix}
                    \bm{f} \\
                    \bm{g}
                \end{pmatrix}
            }_{L^{2}(I; \bm{L}^{2})}^{2}.
        \end{align*}
        From Lemma~\ref{lem:dg_estimate_tf2}, we obtain
        \begin{align*}
            & \quad C_{L^{2}} \left( \norm{\bm{e}_{\bm{u}}}_{L^{2}(I_{n}; \bm{U}_{h})}^{2}
            + \norm{\bm{M}_{\rho}^{1/2}
                \begin{pmatrix}
                    \bm{e}_{\bm{v}} \\
                    \bm{e}_{\bm{w}}
                \end{pmatrix}
            }_{L^{2}(I_{n}; \bm{L}^{2})}^{2}
            + s_{0} \norm{e_{p}}_{L^{2}(I_{n}; L^{2})}^{2} \right) \\
            & \leq \tau c \left( \norm{\bm{e}_{\bm{u}}(t_{n-1})}_{\bm{U}_{h}}^{2}
            + \norm{\bm{M}_{\rho}^{1/2}
                \begin{pmatrix}
                    \bm{e}_{\bm{v}}(t_{n-1}) \\
                    \bm{e}_{\bm{w}}(t_{n-1})
                \end{pmatrix}
            }_{\bm{L}^{2}(\Omega)}^{2}
            + \norm{\bm{K}^{-1/2} \bm{e}_{\bm{w}}}_{L^{2}(I_{n}; \bm{L}^{2})}^{2}
            + s_{0} \norm{e_{p}(t_{n-1})}_{L^{2}(\Omega)}^{2} \right) \\
            & \quad + c \tau \left( \tau^{2(k+1)} + h^{2(\ell+1)} \right) \norm{(\bm{u}, \bm{w}, p)}_{n}^{2}
            + c\tau^{2k+3} \norm{\partial_{t}^{k+1}
                \begin{pmatrix}
                    \bm{f} \\
                    \bm{g}
                \end{pmatrix}
            }_{L^{2}(I_{n}; \bm{L}^{2})}^{2}.
        \end{align*}
        The $ L^{\infty} $--$ L^{2} $ inverse property, cf. \cite{Karakashian2004Conv} gives
        \begin{align*}
            & \quad \norm{\bm{e}_{\bm{u}}}_{L^{\infty}(I_{n}; \bm{U}_{h})}^{2}
            + \norm{\bm{M}_{\rho}^{1/2}
                \begin{pmatrix}
                    \bm{e}_{\bm{v}} \\
                    \bm{e}_{\bm{w}}
                \end{pmatrix}
            }_{L^{\infty}(I_{n}; \bm{L}^{2})}^{2}
            + s_{0} \norm{e_{p}}_{L^{\infty}(I_{n}; L^{2})}^{2} \\
            & \leq c \left(  \norm{\bm{e}_{\bm{u}}(t_{n-1})}_{\bm{U}_{h}}^{2}
            + \norm{\bm{M}_{\rho}^{1/2}
                \begin{pmatrix}
                    \bm{e}_{\bm{v}}(t_{n-1}) \\
                    \bm{e}_{\bm{w}}(t_{n-1}) \\
                \end{pmatrix}
            }_{\bm{L}^{2}(\Omega)}^{2}
            + \norm{\bm{K}^{-1/2} \bm{e}_{\bm{w}}}_{L^{2}(I_{n}; \bm{L}^{2})}^{2}
            + s_{0} \norm{e_{p}(t_{n-1})}_{L^{2}(\Omega)}^{2} \right) \\
            & \quad + c \left( \tau^{2(k+1)} + h^{2(\ell+1)} \right) \norm{(\bm{u}, \bm{w}, p)}_{n}^{2}
            + \tau^{2k+2} \norm{\partial_{t}^{k+1}
                \begin{pmatrix}
                    \bm{f} \\
                    \bm{g}
                \end{pmatrix}
            }_{L^{2}(I_{n}; \bm{L}^{2})}^{2} \\
            & \leq c \left( \tau^{2(k+1)} + h^{2(\ell+1)} \right) \norm{(\bm{u}, \bm{w}, p)}_{n}^{2}
            + c\tau^{2k+2} \norm{\partial_{t}^{k+1}
                \begin{pmatrix}
                    \bm{f} \\
                    \bm{g}
                \end{pmatrix}
            }_{L^{2}(I_{n}; \bm{L}^{2})}^{2}.
        \end{align*}
        Using \eqref{eq:dg_splitting} and Lemma \ref{lem:dg_estimates_eta}, it follows the result.
    \end{proof}
    
    For a convex domain, we can also prove an error estimate of optimal order in $ h $ for the displacement $ \bm{u} $, cf.~\cite{Arnold2001Unif}:
    
    \begin{corollary}
        If $ \Omega $ is convex, the following optimal order estimate holds for $ \bm{u} $
        \begin{align*}
            \norm{\bm{u}_{\tau, h} - \bm{u}}_{L^{\infty}(I; \bm{L}^{2})}^{2}
            & \leq c \left( \tau^{2(k+1)} + h^{2(\ell+2)} \right) \norm{(\bm{u}, \bm{w}, p)}_{I}^{2}
            + c\tau^{2k+2} \norm{\partial_{t}^{k+1}
                \begin{pmatrix}
                    \bm{f} \\
                    \bm{g}
                \end{pmatrix}
            }_{L^{2}(I; \bm{L}^{2})}^{2}.
        \end{align*}
    \end{corollary}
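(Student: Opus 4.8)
The plan is to leave the temporal rate $\tau^{k+1}$ untouched and to improve only the spatial rate for $\bm{u}$ from $h^{\ell+1}$ to $h^{\ell+2}$ by an Aubin--Nitsche duality argument that exploits convexity of $\Omega$. I would start from the splitting \eqref{eq:dg_splitting-1},
\[\bm{u}_{\tau, h}-\bm{u}=\bm{e}_{\bm{u}}-\bm{\eta}_{\bm{u}}-\bm{\eta}_{\bm{u}, \tau},\]
and estimate the three pieces separately in $L^{\infty}(I; \bm{L}^{2})$. The purely temporal part $\bm{\eta}_{\bm{u}, \tau}=\bm{\mathcal{I}}_{\tau}^{\text{GL}}\bm{u}-\bm{u}$ carries no spatial error and is of order $\tau^{k+1}$ by the $L^{\infty}$ analogue of \eqref{eq:interpolation_error_GL-1}.

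The decisive new ingredient is that, on a convex domain, the elliptic projection $\bm{\mathcal{P}}_{1}$ from \eqref{eq:dg_projection_operators-1} satisfies the sharpened bound
\[\norm{\bm{u}-\bm{\mathcal{P}}_{1}\bm{u}}_{\bm{L}^{2}(\Omega)}\le c\,h^{\ell+2}\norm{\bm{u}}_{\bm{H}^{\ell+2}(\Omega)},\]
obtained by testing the defining relation against the solution of the auxiliary elasticity problem with right-hand side $\bm{u}-\bm{\mathcal{P}}_{1}\bm{u}$ and invoking $\bm{H}^{2}$-elliptic regularity (hence convexity), exactly as in~\cite{Arnold2001Unif}. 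Feeding this estimate into the derivation of Lemma~\ref{lem:dg_estimates_eta}, in place of the $\bm{L}^{2}$-projection bound of Lemma~\ref{lem:dg_projection_errors}, upgrades the projection part $\bm{\eta}_{\bm{u}}$ to order $\tau^{k+2}+h^{\ell+2}$ in $L^{\infty}(I; \bm{L}^{2})$.

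The remaining and hardest task is to bound the genuine discrete error $\bm{e}_{\bm{u}}$ in $L^{\infty}(I; \bm{L}^{2})$ at the improved rate, since Theorem~\ref{thm:dg_main_result} only controls $\norm{\bm{e}_{\bm{u}}}_{L^{\infty}(I; \bm{U}_{h})}$ at order $h^{\ell+1}$, and merely passing to the weaker $\bm{L}^{2}$-norm does not by itself gain an order for a fixed function. Here I would run a duality argument: write $\norm{\bm{e}_{\bm{u}}}_{\bm{L}^{2}}^{2}=a_{h}(\bm{e}_{\bm{u}}, \bm{\psi})$ with $\bm{\psi}$ the elasticity dual, replace $\bm{\psi}$ by its $\bm{U}_{h}$-interpolant using Galerkin orthogonality of the error system \eqref{eq:dg_dyn_biot_var_eq}, and absorb the factor $h\,\norm{\bm{\psi}}_{\bm{H}^{2}}\le c\,h\,\norm{\bm{e}_{\bm{u}}}_{\bm{L}^{2}}$ coming from $\bm{H}^{2}$-regularity.

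The main obstacle is that, unlike in the static case, $\bm{e}_{\bm{u}}(t)$ is not $a_{h}$-orthogonal to $\bm{U}_{h}$: equation \eqref{eq:dg_dyn_biot_var_eq-1} couples it to $\partial_{t}\bm{e}_{\bm{v}}$, $\partial_{t}\bm{e}_{\bm{w}}$, $e_{p}$, to the consistency residuals $\bm{r}_{\bm{v}}, \bm{r}_{\bm{w}}$, and to the load-interpolation defect. I therefore expect the duality to be carried out at the space--time level, integrating over $I_{n}$ and summing as in the proof of the theorem, so that the coupling contributions pick up the dual-approximation factor $h$; the terms carrying the already-available $\bm{L}^{2}$ bounds of $\bm{e}_{\bm{v}}, \bm{e}_{\bm{w}}, e_{p}$ (of order $h^{\ell+1}$) then combine with this extra $h$ to stay at $h^{\ell+2}$, while the temporal residuals contribute only $\tau^{k+1}$. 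Controlling these cross terms without losing the gained power of $h$ — in particular the time-derivative couplings, which will likely require integration by parts in time together with the auxiliary identities of Lemmas~\ref{lem:dg_auxiliary_I}--\ref{lem:dg_auxiliary_III} — is the delicate point; everything else reduces to the projection and interpolation estimates already at hand.
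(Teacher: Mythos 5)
Your plan is built on the same mechanism the paper has in mind: under convexity, the Aubin--Nitsche argument for the elliptic projection $\bm{\mathcal{P}}_{1}$ of \eqref{eq:dg_projection_operators-1} upgrades $\norm{\bm{u}-\bm{\mathcal{P}}_{1}\bm{u}}_{\bm{L}^{2}(\Omega)}$ from $h^{\ell+1}$ to $h^{\ell+2}$, and this is exactly what the authors mean by ``full elliptic regularity'' --- their entire proof is the single sentence that the corollary follows from Theorem~\ref{thm:dg_main_result} under that assumption, with the details deferred to the cited reference. So in terms of the route taken, you and the paper agree; you have simply written out what the paper leaves implicit.

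Where you go beyond the paper is in recognizing that the improved projection estimate alone is not enough: Theorem~\ref{thm:dg_main_result} controls $\bm{e}_{\bm{u}}=\bm{u}_{\tau,h}-\bm{z}^{\bm{u}}$ only at order $h^{\ell+1}$ (even measured in $\bm{L}^{2}$, since the energy argument delivers the $\bm{U}_{h}$-norm and weakening the norm does not change the rate of a fixed function), so a separate space--time duality argument for $\bm{e}_{\bm{u}}$ is genuinely required. This diagnosis is correct and is not addressed by the paper's one-line proof. However, your sketch of that duality argument is where the real gap sits, and it is not merely a matter of bookkeeping: several of the coupling terms that drive $\bm{e}_{\bm{u}}$ in the error system involve \emph{divergences} of projection errors rather than the projection errors themselves --- for instance the right-hand side of \eqref{eq:dg_dyn_biot_var_eq-3}, estimated in \eqref{eq:dg_aux_estimates_rhs_II-2}, contains $\divv(\bm{\mathcal{I}}_{\tau}^{\textnormal{GL}}\partial_{t}\bm{u}-\bm{\mathcal{I}}_{\tau}^{\textnormal{GL}}\bm{\mathcal{P}}_{1}\partial_{t}\bm{u})$, which is only $O(h^{\ell+1})$ and gains nothing from the Nitsche trick. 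Your claim that ``the terms carrying the already-available $\bm{L}^{2}$ bounds \dots combine with this extra $h$ to stay at $h^{\ell+2}$'' would have to be verified term by term against such contributions, and for the divergence-type couplings it is not clear the extra power of $h$ survives. So the proposal correctly identifies the strategy and the obstruction, but the decisive step --- closing the space--time duality for $\bm{e}_{\bm{u}}$ at order $h^{\ell+2}$ --- remains unproven in your write-up, just as it does in the paper.
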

    
    \begin{proof}
        The estimate follows from Theorem~\ref{thm:dg_main_result} under full elliptic regularity.
    \end{proof}


    \section*{Acknowledgement}
    The first, second and third authors acknowledge the funding by the German Science Fund (Deutsche Forschungsgemeinschaft, DFG) as part of the project ``Physics-oriented solvers for multicompartmental poromechanics'' under grant number 456235063.
    
    \printbibliography
    
    \appendix
    \section{Proof of Lemma~\ref{lem:dg_auxiliary_I}}
    \label{sec:proof_auxiliary_I}
    
    We note that $ L_{n, i}^{\text{G}, 0}(t_{n, j}^{\text{G}}) = \delta_{ij} $ for $ i = 0, \ldots, k $ and $ j = 1, \ldots, k $ as well as $ L_{n, i}^{\text{G}}(t_{n, j}^{\text{G}}) = \delta_{ij} $ for $ i, j = 1, \ldots, k $. Using the exactness of \eqref{eq:quadrature_g} for polynomials of degree $ 2k-1 $ yields
    \begin{align*}
        \int_{I_{n}} (\sum_{i=0}^{k} \beta_{i} x_{i}  L_{n, i}^{\text{G}, 0}, z) \mathrm{d}t
        & = \tau \sum_{l=1}^{k} \sum_{i=0}^{k} \hat{\omega}_{l}^{\text{G}}  (\beta_{i} x_{i}, z(t_{n, l}^{\text{G}}))  L_{n, i}^{\text{G}, 0}(t_{n, l}^{\text{G}})
        = \tau \sum_{i=1}^{k} \hat{\omega}_{i}^{\text{G}}  (\beta_{i} x_{i}, z(t_{n, i}^{\text{G}})) \\
        & = \tau \sum_{l=1}^{k} \sum_{i=1}^{k} \hat{\omega}_{l}^{\text{G}}  (\beta_{i} x_{i}, z(t_{n, l}^{\text{G}}))  L_{n, i}^{\text{G}}(t_{n, l}^{\text{G}})
        = \int_{I_{n}} (\sum_{i=1}^{k} \beta_{i} x_{i}  L_{n, i}^{\text{G}}, z) \mathrm{d}t.
    \end{align*}
    Thus, we have \eqref{eq:dg_auxiliary_I-1}.
    By the exactness of the Gauss quadrature rule~\eqref{eq:quadrature_g}, we deduce
    \begin{align}
        \label{eq:prf_dg_auxiliary_I-1}
        \int_{I_{n}} L_{n, j}^{\text{G}, 0}  L_{n, i}^{\text{G}} \mathrm{d}t
        & = \tau \sum_{l=1}^{k} \hat{\omega}_{l}^{\text{G}}  L_{n, j}^{\text{G}, 0}(t_{n, l}^{\text{G}})  L_{n, i}^{\text{G}}(t_{n, l}^{\text{G}})
        = \tau \hat{\omega}_{i}^{\text{G}}  \delta_{ij}, \quad i = 1, \ldots, k, j = 0, \ldots, k.
    \end{align}
    Consequently, we receive
    \begin{align*}
        \int_{I_{n}} (x, \sum_{i=1}^{k} \beta_{i} y_{i}  L_{n, i}^{\text{G}}) \mathrm{d}t
        & = \sum_{i=1}^{k} \sum_{j=0}^{k} \beta_{i}  (x_{j}, y_{i}) \int_{I_{n}} L_{n, j}^{\text{G}, 0}  L_{n, i}^{\text{G}} \mathrm{d}t
        = \tau \sum_{i=1}^{k} \hat{\omega}_{i}^{\text{G}} \beta_{i}  (x_{i}, y_{i}) \\
        & = \sum_{i=0}^{k} \sum_{j=1}^{k} \beta_{j}  (x_{j}, y_{i}) \int_{I_{n}} L_{n, j}^{\text{G}}  L_{n, i}^{\text{G}, 0} \mathrm{d}t
        = \int_{I_{n}} (\sum_{j=1}^{k} \beta_{j} x_{j}  L_{n, j}^{\text{G}}, y) \mathrm{d}t,
    \end{align*}
    which shows \eqref{eq:dg_auxiliary_I-2}.
    Similarly, the exactness of \eqref{eq:quadrature_g} leads to the orthogonality
    \begin{align}
        \label{eq:prf_dg_auxiliary_I-2}
        \int_{I_{n}} L_{n, j}^{\text{G}}  L_{n, i}^{\text{G}} \mathrm{d}t
        & = \tau \sum_{l=1}^{k} \hat{\omega}_{l}^{\text{G}}  L_{n, j}^{\text{G}}(t_{n, l}^{\text{G}})  L_{n, i}^{\text{G}}(t_{n, l}^{\text{G}})
        = \tau \hat{\omega}_{i}^{\text{G}}  \delta_{ij}.
    \end{align}
    Exploiting this orthogonality together with the equivalence
    \begin{align}
        \label{eq:prf_dg_auxiliary_I-3}
        C_{1} \left( \tau \sum_{i=0}^{k} \norm{x_{i}}_{L^{2}(\Omega)}^{2} \right)^{1/2}
        \leq \norm{x}_{L^{2}(I_{n}; L^{2})}
        \leq C_{2} \left( \tau \sum_{i=0}^{k} \norm{x_{i}}_{L^{2}(\Omega)}^{2} \right)^{1/2},
    \end{align}
    see \cite[Eq.~(2.4)]{Karakashian2004Conv}, we end up with
    \begin{align*}
        \norm{\sum_{i=1}^{k} \beta_{i} x_{i}  L_{n, i}^{\text{G}}}_{L^{2}(I_{n}; L^{2})}^{2}
        & = \sum_{i, j=1}^{k} \scalar{\beta_{j} x_{j}}{\beta_{i} x_{i}} \int_{I_{n}} L_{n, j}^{\text{G}}  L_{n, i}^{\text{G}} \mathrm{d}t
        = \tau \sum_{i=1}^{k} \hat{\omega}_{i}^{\text{G}} \beta_{i}^{2}  \norm{x_{i}}_{L^{2}(\Omega)}^{2} \\
        & \leq \max_{m=1, \ldots, k} \{\hat{\omega}_{m}^{\text{G}} \beta_{m}^{2}\}  \tau \sum_{i=0}^{k} \norm{x_{i}}_{L^{2}(\Omega)}^{2}
        \leq c \norm{x}_{L^{2}(I_{n}; L^{2})}^{2},
    \end{align*}
    which confirms \eqref{eq:dg_auxiliary_I-3}.
    Assertion~\eqref{eq:dg_auxiliary_I-5} is a special case of \eqref{eq:dg_auxiliary_I-3} using $ \beta_{i} = 1 $ for $ i = 1, \ldots, k $.

    \section{Proof of Lemma~\ref{lem:dg_auxiliary_II}}
    \label{sec:proof_auxiliary_II}
    
    Using the exactness of \eqref{eq:quadrature_g} for polynomials up to degree $ 2k-1 $, we get for $ i = 0, \ldots, k $
    \begin{align}
        \label{eq:prf_dg_auxiliary_II-1}
        \int_{I_{n}} \partial_{t} L_{n, i}^{\text{G}, 0}  L_{n, j}^{\text{G}, 0} \mathrm{d}t
        & = \tau \sum_{l=1}^{k} \hat{\omega}_{l}^{\text{G}}  \partial_{t} L_{n, i}^{\text{G}, 0}(t_{n, l}^{\text{G}})  L_{n, j}^{\text{G}, 0}(t_{n, l}^{\text{G}})
        =
        \begin{cases}
            \tau \hat{\omega}_{j}^{\text{G}}  \partial_{t} L_{n, i}^{\text{G}, 0}(t_{n, j}^{\text{G}}), & \text{for $ j = 1, \ldots, k $}, \\
            0, & \text{for $ j = 0 $},
        \end{cases}
    \end{align}
    since $ t_{n, i}^{\text{G}, 0} = t_{n, i}^{\text{G}} $ for $ i = 1, \ldots, k $ and $ L_{n, i}^{\text{G}, 0}(t_{n, j}^{\text{G}, 0}) = \delta_{ij} $ for $ i, j = 0, \ldots, k $ as defined in Section~\ref{subsec:quadrature_formulas}.
    Utilizing \eqref{eq:dg_auxiliary_II_assumption} followed by \eqref{eq:prf_dg_auxiliary_I-1} and \eqref{eq:prf_dg_auxiliary_II-1}, we can infer that
    \begin{align*}
        \int_{I_{n}} (x, y_{\beta}) \mathrm{d}t
        & = \sum_{m, i=0}^{k} \sum_{j=1}^{k} \beta_{j}  \partial_{t} L_{n, i}^{\text{G}, 0}(t_{n, j}^{\text{G}})  (x_{m}, y_{i}) \int_{I_{n}} L_{n, m}^{\text{G}, 0}  L_{n, j}^{\text{G}} \mathrm{d}t
        = \tau \sum_{i=0}^{k} \sum_{j=1}^{k} \hat{\omega}_{j}^{\text{G}} \beta_{j}  \partial_{t} L_{n, i}^{\text{G}, 0}(t_{n, j}^{\text{G}})  (x_{j}, y_{i}) \\
        & = \sum_{i, j=0}^{k} \beta_{j}  (x_{j}, y_{i}) \int_{I_{n}} L_{n, j}^{\text{G}, 0}  \partial_{t} L_{n, i}^{\text{G}, 0} \mathrm{d}t
        = \int_{I_{n}} (\sum_{j=0}^{k} \beta_{j} x_{j}  L_{n, j}^{\text{G}, 0}, \partial_{t} y) \mathrm{d}t,
    \end{align*}
    hence we have \eqref{eq:dg_auxiliary_II-1}.
    In addition, the exactness of \eqref{eq:quadrature_g} yields
    \begin{align}
        \label{eq:prf_dg_auxiliary_II-2}
        \int_{I_{n}} \partial_{t} L_{n, i}^{\text{G}, 0}  L_{n, j}^{\text{G}} \mathrm{d}t
        & = \tau \sum_{l=1}^{k} \hat{\omega}_{l}^{\text{G}}  \partial_{t} L_{n, i}^{\text{G}, 0}(t_{n, l}^{\text{G}})  L_{n, j}^{\text{G}}(t_{n, l}^{\text{G}})
        = \tau \hat{\omega}_{j}^{\text{G}}  \partial_{t} L_{n, i}^{\text{G}, 0}(t_{n, j}^{\text{G}}), \quad i = 0, \ldots, k, j = 1, \ldots, k,
    \end{align}
    where we have used $ L_{n, i}^{\text{G}}(t_{n, j}^{\text{G}}) = \delta_{ij} $ for $ i, j = 1, \ldots, k $.
    Employing \eqref{eq:dg_auxiliary_II_assumption} and \eqref{eq:prf_dg_auxiliary_II-2}, we get
    \begin{align*}
        \int_{I_{n}} (\partial_{t} x, y_{\beta}) \mathrm{d}t
        & = \sum_{m, i=0}^{k} \sum_{j=1}^{k} \beta_{j}  \partial_{t} L_{n, i}^{\text{G}, 0}(t_{n, j}^{\text{G}})  (x_{m}, y_{i}) \int_{I_{n}} \partial_{t} L_{n, m}^{\text{G}, 0}  L_{n, j}^{\text{G}} \mathrm{d}t \\
        & = \tau \sum_{m, i=0}^{k} \sum_{j=1}^{k} \hat{\omega}_{j}^{\text{G}} \beta_{j}  \partial_{t} L_{n, i}^{\text{G}, 0}(t_{n, j}^{\text{G}})  \partial_{t} L_{n, m}^{\text{G}, 0}(t_{n, j}^{\text{G}})  (x_{m}, y_{i}) \\
        & = \sum_{m, i=0}^{k} \sum_{j=1}^{k} \beta_{j}  \partial_{t} L_{n, m}^{\text{G}, 0}(t_{n, j}^{\text{G}})  (x_{m}, y_{i}) \int_{I_{n}} L_{n, j}^{\text{G}}  \partial_{t} L_{n, i}^{\text{G}, 0} \mathrm{d}t
        = \int_{I_{n}} (x_{\beta}, \partial_{t} y) \mathrm{d}t.
    \end{align*}

    \section{Proof of Lemma~\ref{lem:dg_auxiliary_III}}
    \label{sec:proof_auxiliary_III}
    
    Using \eqref{eq:dg_auxiliary_III_assumption} together with \eqref{eq:dg_auxiliary_I-1}, we obtain
    \begin{align}
        \label{eq:prf_dg_auxiliary_III-1}
        \begin{aligned}
            \int_{I_{n}} \scalar{\sum_{i=0}^{k} \beta_{i} x_{i}  L_{n, i}^{\text{G}, 0}}{\partial_{t} x} \mathrm{d}t
            & = \int_{I_{n}} \scalar{\sum_{i=1}^{k} \beta_{i} x_{i}  L_{n, i}^{\text{G}}}{\partial_{t} x} \mathrm{d}t
            = \sum_{i=1}^{k} \sum_{j=0}^{k} \beta_{i} \scalar{x_{i}}{x_{j}} \int_{I_{n}} L_{n, i}^{\text{G}}  \partial_{t} L_{n, j}^{\text{G}, 0} \mathrm{d}t \\
            & = \sum_{i, j=1}^{k} \beta_{i}^{1/2}  m_{ij}  \beta_{j}^{-1/2} \scalar{\beta_{i}^{1/2} x_{i}}{\beta_{j}^{1/2} x_{j}}
            + \sum_{i=1}^{k} \beta_{i}  m_{i0} \scalar{x_{i}}{x_{0}},
        \end{aligned}
    \end{align}
    where
    \begin{align*}
        m_{ij} & := \int_{I_{n}} L_{n, i}^{\text{G}}  \partial_{t} L_{n, j}^{\text{G}, 0} \mathrm{d}t, \qquad i = 1, \ldots, k, j = 0, \ldots, k.
    \end{align*}
    Next, we define $ \bm{M}, \bm{\widetilde{M}}, \bm{D} \in \mathbb{R}^{k \times k} $ by
    \begin{align*}
        \bm{M} & := (m_{ij})_{i,j = 1}^{k}, &
        \bm{\widetilde{M}} & := \bm{D}^{-1/2} \bm{M} \bm{D}^{1/2}, &
        \bm{D} := \diag(\hat{t}_{1}^{\text{G}}, \ldots, \hat{t}_{k}^{\text{G}}).
    \end{align*}
    Exploiting $ \beta_{i} = (\hat{t}_{i}^{\text{G}})^{-1} $ for $ i = 1, \ldots, k $ and the positive definiteness of $ \bm{\widetilde{M}} $, cf.~\cite[Lem.~2.1]{Karakashian1999ASpa}, we conclude
    \begin{align}
        \label{eq:prf_dg_auxiliary_III-2}
        \sum_{i, j=1}^{k} \beta_{i}^{1/2}  m_{ij}  \beta_{j}^{-1/2} \scalar{\beta_{i}^{1/2} x_{i}}{\beta_{j}^{1/2} x_{j}}
        = \sum_{i, j=1}^{k} \widetilde{m}_{ij} \scalar{\beta_{i}^{1/2} x_{i}}{\beta_{j}^{1/2} x_{j}}
        \geq c \sum_{i=1}^{k} \scalar{\beta_{i}^{1/2} x_{i}}{\beta_{i}^{1/2} x_{i}}
        \geq c \sum_{i=1}^{k} \norm{x_{i}}_{L^{2}(\Omega)}^{2}.
    \end{align}
    
    We note that $ L_{n, i}^{\text{G}} = \hat{L}_{i}^{\text{G}} \circ T_{n}^{-1} $ for $ i = 1, \ldots, k $ and $ L_{n, j}^{\text{G}, 0} = \hat{L}_{j}^{\text{G}, 0} \circ T_{n}^{-1} $ for $ j = 0, \ldots, k $, where $ \{\hat{L}_{i}^{\text{G}}\}_{i=1}^{k} $ and $ \{\hat{L}_{j}^{\text{G}, 0}\}_{j=0}^{k} $ represent the corresponding interpolation polynomials on $ \hat{I} $.
    Furthermore, we have
    \begin{align}
        \label{eq:prf_dg_auxiliary_III-3}
        \norm{L_{n, i}^{\text{G}}}_{L^{2}(I_{n})}^{2} = \tau \norm{\hat{L}_{i}^{\text{G}}}_{L^{2}(\hat{I})}^{2}
        \qquad \text{and} \qquad
        \norm{\partial_{t} L_{n, j}^{\text{G}, 0}}_{L^{2}(I_{n})}^{2} = \tau^{-1} \norm{\partial_{\hat{t}} \hat{L}_{j}^{\text{G}, 0}}_{L^{2}(\hat{I})}^{2}.
    \end{align}
    Using the Cauchy-Schwarz inequality and Young's inequality with $ \varepsilon > 0 $, it follows
    \begin{align}
        \label{eq:prf_dg_auxiliary_III-4}
        \begin{aligned}
            \sum_{i=1}^{k} \beta_{i}  m_{i0} \scalar{x_{i}}{x_{0}}
            & = \int_{I_{n}} \scalar{\sum_{i=1}^{k} \beta_{i} x_{i}  L_{n, i}^{\text{G}}}{x_{0}  \partial_{t} L_{n, 0}^{\text{G}, 0}} \mathrm{d}t
            \geq - \norm{\sum_{i=1}^{k} \beta_{i} x_{i}  L_{n, i}^{\text{G}}}_{L^{2}(I_{n}; L^{2})}  \norm{x_{0}  \partial_{t} L_{n, 0}^{\text{G}, 0}}_{L^{2}(I_{n}; L^{2})} \\
            & \geq - c \sum_{i=1}^{k} \norm{x_{i}}_{L^{2}(\Omega)}  \norm{L_{n, i}^{\text{G}}}_{L^{2}(I_{n})}  \norm{x_{0}}_{L^{2}(\Omega)}  \norm{\partial_{t} L_{n, 0}^{\text{G}, 0}}_{L^{2}(I_{n})} \\
            & \geq - c \left( \sum_{i=1}^{k} \norm{x_{i}}_{L^{2}(\Omega)}^{2} \right)^{1/2} \norm{x_{0}}_{L^{2}(\Omega)}
            \geq - \frac{c}{2} \varepsilon \sum_{i=1}^{k} \norm{x_{i}}_{L^{2}(\Omega)}^{2}
            - \frac{c}{2 \varepsilon} \norm{x_{0}}_{L^{2}(\Omega)}^{2},
        \end{aligned}
    \end{align}
    where $ x_{0} = x(t_{n-1}^{+}) $.
    Combining \eqref{eq:prf_dg_auxiliary_III-1}, \eqref{eq:prf_dg_auxiliary_III-2} and \eqref{eq:prf_dg_auxiliary_III-4} with $ \varepsilon > 0 $ small enough, we receive
    \begin{align*}
        \int_{I_{n}} \scalar{\sum_{i=0}^{k} \beta_{i} x_{i}  L_{n, i}^{\text{G}, 0}}{\partial_{t} x} \mathrm{d}t
        & \geq C_{1} \sum_{i=1}^{k} \norm{x_{i}}_{L^{2}(\Omega)}^{2}
        - C_{2} \norm{x_{0}}_{L^{2}(\Omega)}^{2}
        = C_{1} \sum_{i=0}^{k} \norm{x_{i}}_{L^{2}(\Omega)}^{2}
        - (C_{1} + C_{2}) \norm{x_{0}}_{L^{2}(\Omega)}^{2}.
    \end{align*}
    Multiplying this inequality by $ \tau $ and utilizing \eqref{eq:prf_dg_auxiliary_I-3} results in
    \begin{align*}
        \tau \int_{I_{n}} \scalar{\sum_{i=0}^{k} \beta_{i} x_{i}  L_{n, i}^{\text{G}, 0}}{\partial_{t} x} \mathrm{d}t
        & \geq C_{1, 1} \norm{x}_{L^{2}(I_{n}; L^{2})}^{2}
        - C_{2, 1} \tau \norm{x_{0}}_{L^{2}(\Omega)}^{2},
    \end{align*}
    showing \eqref{eq:dg_auxiliary_III-1}.
    Using equations \eqref{eq:prf_dg_auxiliary_I-1} and \eqref{eq:prf_dg_auxiliary_I-3}, we infer
    \begin{align*}
        \int_{I_{n}} \scalar{\sum_{i=1}^{k} \beta_{i} x_{i}  L_{n, i}^{\text{G}}}{x} \mathrm{d}t
        & = \sum_{i=1}^{k} \sum_{j=0}^{k} \beta_{i} \scalar{x_{i}}{x_{j}} \int_{I_{n}} L_{n, i}^{\text{G}}  L_{n, j}^{\text{G}, 0} \mathrm{d}t
        = \tau \sum_{i=1}^{k} \hat{\omega}_{i}^{\text{G}} \beta_{i} \scalar{x_{i}}{x_{i}} \\
        & \geq c \tau \sum_{i=0}^{k} \norm{x_{i}}_{L^{2}(\Omega)}^{2}
        - c \tau \norm{x_{0}}_{L^{2}(\Omega)}^{2}
        \geq C_{1, 2} \norm{x}_{L^{2}(I_{n}; L^{2})}^{2}
        - C_{2, 2} \tau \norm{x_{0}}_{L^{2}(\Omega)}^{2},
    \end{align*}
    which confirms \eqref{eq:dg_auxiliary_III-2} with $ C_{1} = \min\{C_{1, 1}, C_{1, 2}\} $, $ C_{2} = \max\{C_{2, 1}, C_{2, 2}\} $.
    Inserting \eqref{eq:dg_auxiliary_III_assumption} along with \eqref{eq:prf_dg_auxiliary_II-2} and the exactness of \eqref{eq:quadrature_g}, leads to
    \begin{align*}
        \int_{I_{n}} \scalar{\partial_{t} x}{x_{\beta}} \mathrm{d}t
        & = \sum_{m=0}^{k} \sum_{i=0}^{k} \sum_{j=1}^{k} \beta_{j}  \partial_{t} L_{n, i}^{\text{G}, 0}(t_{n, j}^{\text{G}}) \scalar{x_{m}}{x_{i}} \int_{I_{n}} \partial_{t} L_{n, m}^{\text{G}, 0}  L_{n, j}^{\text{G}} \mathrm{d}t \\
        & = \tau \sum_{j=1}^{k} \hat{\omega}_{j}^{\text{G}} \beta_{j} \scalar{\sum_{m=0}^{k} x_{m}  \partial_{t} L_{n, m}^{\text{G}, 0}(t_{n, j}^{\text{G}})}{\sum_{i=0}^{k} x_{i}  \partial_{t} L_{n, i}^{\text{G}, 0}(t_{n, j}^{\text{G}})}
        \leq \max_{j=1, \ldots, k} \beta_{j} \int_{I_{n}} \scalar{\partial_{t} x}{\partial_{t} x} \mathrm{d}t.
    \end{align*}
    Analogously, it follows
    \begin{align*}
        \int_{I_{n}} \scalar{\partial_{t} x}{x_{\beta}} \mathrm{d}t
        & \geq \min_{j=1, \ldots, k} \beta_{j} \int_{I_{n}} \scalar{\partial_{t} x}{\partial_{t} x} \mathrm{d}t.
    \end{align*}
    Utilizing \eqref{eq:dg_auxiliary_III-3} coupled with the Cauchy-Schwarz inequality, we deduce
    \begin{align*}
        c \norm{\partial_{t} x}_{L^{2}(I_{n}; L^{2})}^{2}
        & \leq \int_{I_{n}} \scalar{\partial_{t} x}{x_{\beta}} \mathrm{d}t
        \leq \norm{\partial_{t} x}_{L^{2}(I_{n}; L^{2})}  \norm{x_{\beta}}_{L^{2}(I_{n}; L^{2})},
    \end{align*}
    thus it holds
    \begin{align}
        \label{eq:prf_dg_auxiliary_III-5}
        c \norm{\partial_{t} x}_{L^{2}(I_{n}; L^{2})} \leq \norm{x_{\beta}}_{L^{2}(I_{n}; L^{2})}.
    \end{align}
    The opposite inequality can be shown with \eqref{eq:prf_dg_auxiliary_I-2} and the fact that the quadrature rule~\eqref{eq:quadrature_g} is exact for polynomials up to degree $ 2k-1 $, thence we become
    \begin{align}
        \label{eq:prf_dg_auxiliary_III-6}
        \begin{aligned}
            \norm{x_{\beta}}_{L^{2}(I_{n}; L^{2})}^{2}
            & = \sum_{i, l=0}^{k} \sum_{j, m=1}^{k} \beta_{m} \beta_{j}  \partial_{t} L_{n, l}^{\text{G}, 0}(t_{n, m}^{\text{G}})  \partial_{t} L_{n, i}^{\text{G}, 0}(t_{n, j}^{\text{G}}) \scalar{x_{l}}{x_{i}} \int_{I_{n}} L_{n, m}^{\text{G}}  L_{n, j}^{\text{G}} \mathrm{d}t \\
            & = \tau \sum_{j=1}^{k} \hat{\omega}_{j}^{\text{G}} \beta_{j}^{2} \scalar{\sum_{l=0}^{k} x_{l}  \partial_{t} L_{n, l}^{\text{G}, 0}(t_{n, j}^{\text{G}})}{\sum_{i=0}^{k} x_{i}  \partial_{t} L_{n, i}^{\text{G}, 0}(t_{n, j}^{\text{G}})}
            \leq \max_{j=1, \ldots, k} \beta_{j}^{2} \int_{I_{n}} \scalar{\partial_{t} x}{\partial_{t} x} \mathrm{d}t.
        \end{aligned}
    \end{align}
    Together, equations \eqref{eq:prf_dg_auxiliary_III-5} and \eqref{eq:prf_dg_auxiliary_III-6} yield \eqref{eq:dg_auxiliary_III-4}.
    We recall that $ L_{n, i}^{\text{G}, 0} = \hat{L}_{i}^{\text{G}, 0} \circ T_{n}^{-1} $.
    Using the triangle inequality, the Cauchy-Schwarz inequality, \eqref{eq:prf_dg_auxiliary_III-3} and \eqref{eq:prf_dg_auxiliary_I-3}, we get
    \begin{align*}
        \norm{\partial_{t} x}_{L^{2}(I_{n}; L^{2})}^{2}
        & = \norm{\sum_{i=0}^{k} x_{i}  \partial_{t} L_{n, i}^{\text{G}, 0}}_{L^{2}(I_{n}; L^{2})}^{2}
        \leq \left( \sum_{i=0}^{k} \norm{x_{i}}_{L^{2}(\Omega)}  \norm{\partial_{t} L_{n, i}^{\text{G}, 0}}_{L^{2}(I_{n})} \right)^{2} \\
        & \leq c \sum_{i=0}^{k} \norm{x_{i}}_{L^{2}(\Omega)}^{2}  \norm{\partial_{t} L_{n, i}^{\text{G}, 0}}_{L^{2}(I_{n})}^{2}
        \leq c \tau^{-2} \norm{x}_{L^{2}(I_{n}; L^{2})}^{2}.
    \end{align*}
\end{document}